\documentclass[11pt,twoside]{amsart}

\usepackage{amsthm}
\usepackage{amsmath}
\usepackage{hyperref}
\usepackage{ifthen}
\usepackage{mathtools}


\setlength{\oddsidemargin}{0cm}
\setlength{\evensidemargin}{0cm}
\setlength{\textwidth}{16cm}
\setlength{\topmargin}{-1cm}
\setlength{\textheight}{22cm}

\usepackage{etoolbox}
\let\bbordermatrix\bordermatrix
\patchcmd{\bbordermatrix}{8.75}{4.75}{}{}
\patchcmd{\bbordermatrix}{\left(}{\left[}{}{}
\patchcmd{\bbordermatrix}{\right)}{\right]}{}{}

\usepackage{graphicx}
\usepackage{tikz}
\usetikzlibrary{calc}
\usetikzlibrary{arrows}
\usetikzlibrary{shapes}

\setcounter{MaxMatrixCols}{20}

\theoremstyle{plain}
\newtheorem{theorem}{Theorem}
\newtheorem{lemma}[theorem]{Lemma}
\newtheorem{corollary}[theorem]{Corollary}
\newtheorem{proposition}[theorem]{Proposition}

\theoremstyle{definition}

\newtheorem{example}[theorem]{Example}

\newtheorem{problem}[theorem]{Problem}

\theoremstyle{remark}

\newcommand{\set}[1]{\left\{#1\right\}}

\def\tn{\textnormal}
\def\ld{\lambda}

\def\mN{\mathbb{N}}
\def\mR{\mathbb{R}}
\def\mS{\mathbb{S}}
\def\mZ{\mathbb{Z}}

\def\A{\mathcal{A}}
\def\B{\mathcal{B}}

\def\K{\mathcal{K}}

\def\S{\mathcal{S}}
\def\T{\mathcal{T}}

\def\es{\emptyset}

\def\b{\beta}

\def\ce{\coloneqq}

\newcommand{\ignore}[1]{}

\DeclareMathOperator{\LS}{LS}

\DeclareMathOperator{\CG}{CG}

\DeclareMathOperator{\FRAC}{FRAC}

\DeclareMathOperator{\STAB}{STAB}

\DeclareMathOperator{\diag}{diag}

\DeclareMathOperator{\conv}{conv}
\DeclareMathOperator{\cone}{cone}

\DeclareMathOperator{\inte}{int}
\DeclareMathOperator{\relint}{relint}

\title[Stable Set Polytopes with Rank $|V(G)|/3$ for the $\LS_+$ Operator]{\bf Stable Set Polytopes with Rank $|V(G)|/3$ \\ for the Lov{\'a}sz--Schrijver SDP Operator}

\author{Yu Hin (Gary) Au}
\thanks{Yu Hin (Gary) Au: Corresponding author. Department of Mathematics and Statistics, University of Saskatchewan, Saskatoon, Saskatchewan, S7N 5E6 Canada. E-mail: gary.au@usask.ca}

\author{Levent Tun{\c c}el}
\thanks{Levent Tun{\c c}el: Research of this author was supported in part by an NSERC Discovery Grant. Department of Combinatorics and Optimization, Faculty of Mathematics, University of Waterloo, Waterloo, Ontario, N2L 3G1 Canada. E-mail: levent.tuncel@uwaterloo.ca}

\date{January 12, 2025, revised: \today}

\keywords{stable set problem, lift and project, combinatorial optimization, semidefinite programming, integer programming}

\begin{document}

\begin{abstract}
We study the lift-and-project rank of the stable set polytope of graphs with respect to the Lov{\'a}sz--Schrijver SDP operator $\LS_+$ applied to the fractional stable set polytope. In particular, we show that for every positive integer $\ell$, the smallest possible graph with $\LS_+$-rank $\ell$ contains $3\ell$ vertices. This result is sharp and settles a conjecture posed by Lipt{\'a}k and the second author in 2003, as well as answers a generalization of a problem posed by Knuth in 1994. We also show that for every positive integer $\ell$ there exists a vertex-transitive graph on at most $4\ell+12$ vertices with $\LS_+$-rank at least $\ell$.
\end{abstract}

\maketitle 

\section{Introduction}\label{sec01}

In discrete optimization, a common and very successful approach for tackling a given problem is to model it as an integer program and analyze it using convex optimization techniques. More precisely, suppose we are interested in solving the integer program
\begin{equation}\label{eq101}
\max \set{c^{\top}x : x \in P \cap \set{0,1}^n},
\end{equation}
where $c\in \mR^n$ and $P \subseteq [0,1]^n$ are given. Notice that we can replace $P \cap \set{0,1}^n$ by 
\[
P_I \ce \conv\set{ P \cap \set{0,1}^n},
\]
the \emph{integer hull} of $P$, as the feasible region in ~\eqref{eq101} to obtain a convex optimization problem. However, for a general given $P$ (given as the solution set of a system of linear inequalities), it is $\mathcal{N}\mathcal{P}$-hard to efficiently obtain a description (such as a list of its facet-inducing inequalities) of $P_I$. Now if the given set $P$ is tractable (i.e.,  it admits a polynomial-time separation oracle), we could also choose to simply optimize $c^{\top}x$ over $P$ and at least obtain an approximate solution and an upper bound on the optimal value of~\eqref{eq101} in polynomial time. However, as many sets $P$ can share the same integer hull, the quality of the approximate solution obtained under this approach very much depends on whether $P$ is a ``tight'' or ``loose'' relaxation of $P_I$.

One way to systematically tighten a given relaxation is via \emph{lift-and-project methods}. While a number of operators fall under this approach, most notably those devised in~\cite{SheraliA90, LovaszS91, BalasCC93, Lasserre01, BienstockZ04, GouveiaPT10, AuT16}, in this work we  focus on the operator $\LS_+$ (also known as $N_+$ in the literature) devised by Lov{\'a}sz and Schrijver~\cite{LovaszS91}. 

Before we define $\LS_+$, we need some notation. Given a set $P \subseteq [0,1]^n$, we define the \emph{homogenized cone} of $P$ to be
\[
\cone(P) \ce \set{ \begin{bmatrix} \lambda \\ \lambda x \end{bmatrix} : \ld \geq 0, x \in P}.
\]
Notice that $\cone(P) \subseteq \mR^{n+1}$, and we will index the new coordinate by $0$. Also, given a vector $x$ (which, by default, is a column vector) and index $i$, we let $x_i$ or $[x]_i$ denote the $i$-entry in $x$. Next, we let $e_i$ be the unit vector whose $i$-entry is $1$, with all other entries being $0$. We let $\mS_+^{n}$ denote the set of $n \times n$ real symmetric positive semidefinite matrices. To express that a symmetric matrix $M \in \mR^{n \times n}$ is positive semidefinite, we may write $M \in \mS_+^n$, or alternatively use the notation $M \succeq 0$. We also let $\diag(M)$ denote the vector made up of the diagonal entries of $M$. Also, given a positive integer $n$, we let $[n] \ce \set{1,2,\ldots, n}$. 

The $\LS_+$ operator can be defined as follows. Given $P \subseteq [0,1]^n$, let
\[
\widehat{\LS}_+(P) \ce \set{ Y \in \mS_+^{n+1} : Ye_0 = \diag(Y), Ye_i, Y(e_0-e_i) \in \cone(P)~\forall i \in [n] }.
\]
Then we define
\[
\LS_+(P) \ce \set{ x \in \mR^n : \exists Y \in \widehat{\LS}_+(P), Ye_0 = \begin{bmatrix} 1 \\ x \end{bmatrix}}.
\]
Intuitively, $\LS_+$ \emph{lifts} $P$ to a set of $(n+1) \times (n+1)$ matrices and imposes some constraints in the lifted space to obtain $\widehat{\LS}_+(P)$, and then \emph{projects} it back down to $\mR^n$ to obtain the tightened relaxation $\LS_+(P)$. Then one can show that $P_I \subseteq \LS_+(P) \subseteq P$ (see, for instance,~\cite[Lemma 3]{AuT24b} for a proof).

Moreover, we can apply $\LS_+$ successively to a set $P$ to obtain yet tighter relaxations. Define $\LS_+^0(P) \ce P$,  and for every positive integer $k \geq 1$ define $\LS_+^k(P) \ce \LS_+\left( \LS_+^{k-1}(P) \right)$. Then, for every set $P$, $\LS_+$ generates a hierarchy of nested convex relaxations which satisfy
\[
P \supseteq \LS_+(P) \supseteq \LS_+^2(P) \supseteq \cdots \supseteq \LS_+^n(P) = P_I.
\]
Thus, instead of optimizing over $P$, one can optimize over the tightened relaxation $\LS_+^k(P)$ for a chosen $k$ and obtain a potentially better approximate solution. Furthermore, if $P$ is tractable and $k = O(1)$, then $\LS_+^k(P)$ is also tractable. Thus, the $\LS_+$-relaxations offer a ``generic'' polynomial-time approximation algorithm for a broad range of $0,1$ integer programs --- and as an immediate extension, many hard discrete optimization problems. We define the \emph{$\LS_+$-rank} of a set $P$ to be the smallest integer $k$ where $\LS_+^k(P) = P_I$. Since the $n$-th relaxation generated by $\LS_+$ is guaranteed to be equal to $P_I$~\cite{LovaszS91}, every $P \subseteq [0,1]^n$ has $\LS_+$-rank at most $n$.

In this manuscript, we are particularly interested in studying the $\LS_+$-relaxations for the stable set problem of graphs. Given a simple, undirected graph $G = (V(G), E(G))$, we say that a set of vertices $S \subseteq V(G)$ is a \emph{stable set} in $G$ if no two vertices in $S$ are joined by an edge in $G$. The \emph{(maximum) stable set problem}, which aims to find the stable set of the largest cardinality in a given graph, is one of the most well-studied problems in combinatorial optimization and is well-known to be $\mathcal{N}\mathcal{P}$-hard.

Given a graph $G$, we define its \emph{fractional stable set polytope} to be
\[
\FRAC(G) \ce \set{ x \in [0,1]^{V(G)} : x_i + x_j \leq 1~\forall \set{i,j} \in E(G)},
\]
and its \emph{stable set polytope} to be $\STAB(G) \ce \FRAC(G)_I$. Notice that $x \in \set{0,1}^{V(G)}$ belongs to $\FRAC(G)$ if and only if it is the incidence vector of a stable set in $G$, and that $\STAB(G)$ is precisely the convex hull of the incidence vectors of all stable sets in $G$. It is well-known that $\FRAC(G) = \STAB(G)$ if and only if the given graph is bipartite. In other cases, we can then apply $\LS_+$ to $\FRAC(G)$ to obtain a hierarchy of convex relaxations that approximate $\STAB(G)$. Furthermore, the $\LS_+$-rank of $\FRAC(G)$ (which we will simply call the $\LS_+$-rank of $G$ and denote by $r_+(G)$) gives a measure of the level of complexity of the stable set problem on $G$ in the perspective of $\LS_+$ and $\FRAC(G)$. For instance, Lov{\'a}sz and Schrijver~\cite{LovaszS91} showed that many well-known families of graphs, including perfect graphs, odd cycles, odd antiholes, and odd wheels, have $\LS_+$-rank $1$. In the last decade, there has been significant progress (see, for instance,~\cite{BianchiENT13, BianchiENT17, Wagler22, BianchiENW23}) 
 on obtaining a combinatorial characterization of graphs with $\LS_+$-rank $1$, which are commonly known as \emph{$\LS_+$-perfect graphs} in the literature.

While $\LS_+$ can compute $\STAB(G)$ in polynomial time for many graphs $G$, this also raises the natural question of which graphs give the worst-case instances for $\LS_+$. While there are easy to construct polytopes in $[0,1]^n$ which have the highest possible $\LS_+$-rank of $n$ (see, for instance,~\cite{GoemansT01, AuT18}), Lipt{\'a}k and the second author proved the following~\cite[Theorem 39]{LiptakT03}.

\begin{theorem}\label{thm101}
For every graph $G$, $r_+(G) \leq \lfloor \frac{|V(G)|}{3} \rfloor$.
\end{theorem}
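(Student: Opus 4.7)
I would prove this by strong induction on $n := |V(G)|$; the base cases $n \leq 2$ are bipartite, whence $\FRAC(G) = \STAB(G)$ and $r_+(G) = 0$. The key engine is the \emph{conditioning} property of $\LS_+$: if $x \in \LS_+^{k+1}(P)$ has certificate $Y \in \widehat{\LS}_+(\LS_+^k(P))$, then for every $i$ with $0 < x_i < 1$, the normalized vectors $x^{(1,i)} := Y e_i / x_i$ and $x^{(0,i)} := Y(e_0 - e_i)/(1-x_i)$ lie (after projecting off the $0$-coordinate) in $\LS_+^k(P)$, satisfy $[x^{(1,i)}]_i = 1$ and $[x^{(0,i)}]_i = 0$, and $x = x_i\, x^{(1,i)} + (1 - x_i)\, x^{(0,i)}$. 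Specializing to $P = \FRAC(G)$, the edge constraints force $[x^{(1,i)}]_j = 0$ for every $j \in N(i)$, so $x^{(1,i)}$ corresponds naturally to a point in $\LS_+^k(\FRAC(G - N[i]))$, while $x^{(0,i)}$ corresponds to a point in $\LS_+^k(\FRAC(G - i))$.

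The plan is then a case analysis on the structure of $G$. If $G$ is bipartite or contains an isolated vertex, the bound is immediate or follows by stripping the vertex and inducting. If $G$ has a vertex $v$ of degree $1$ with neighbor $u$, one should argue (using the fact that in any stable set of $G$ one may swap $u$ for $v$ without loss of cardinality) that $r_+(G) \leq r_+(G - \{u,v\})$, so induction delivers $r_+(G) \leq (n-2)/3 \leq n/3$. In the remaining case where every vertex has degree at least $2$, picking any $v$ and conditioning on $x_v$ yields $r_+(G) \leq 1 + \max\{r_+(G - N[v]),\, r_+(G - v)\}$. The $x_v = 1$ branch is comfortable, since $|V(G - N[v])| \leq n - 3$ and induction gives $r_+(G - N[v]) \leq (n-3)/3 = n/3 - 1$.

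The main obstacle is the $x_v = 0$ branch in the $\delta(G) \geq 2$ case: $G - v$ has $n-1$ vertices, and the crude inductive bound $r_+(G - v) \leq (n-1)/3$ overshoots the target $n/3 - 1$ by $2/3$. Closing this gap forces one to choose $v$ cleverly so that $G - v$ admits a sharper inductive bound. The likely strategy is to select $v$ so that $G - v$ contains an isolated or degree-one vertex (reducing to an already-handled case with slack $\geq 2/3$), or more generally so that removing $v$ triggers a cascade of structural reductions; for a min-degree-$\geq 2$ graph, such a $v$ should always exist except perhaps in very special cases like odd cycles, which one can dispose of directly from $r_+(C_{2k+1}) = 1 \leq (2k+1)/3$. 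Identifying such a vertex in every graph with $\delta \geq 2$, and executing the amortized vertex-count that justifies the factor of $1/3$, is where I expect the technical heart of the argument to sit and is precisely the obstacle forced by the denominator $3$.
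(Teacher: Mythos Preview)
The paper does not itself prove Theorem~\ref{thm101}; it is quoted from \cite[Theorem~39]{LiptakT03} as background, so there is no in-paper argument to compare against directly.

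On your outline: you have located the obstacle correctly, but the proposed fix does not close it. Your degree-one reduction $r_+(G)\le r_+(G-\{u,v\})$ is argued via a stable-set cardinality swap, which constrains $\alpha(G)$, not $r_+(G)$; the correct statement in this vein is that deleting a simplicial (in particular, pendant) vertex does not change $r_+$, but that removes one vertex, not two, so it does not create the slack your cascade needs. More seriously, in the minimum-degree-$\ge 2$ case your plan is to pick $v$ so that $G-v$ acquires a vertex of degree $\le 1$ and then cascade---but this is impossible whenever every vertex of $G$ has degree $\ge 3$, since deleting a single vertex lowers each neighbour's degree by only one. Graphs with minimum degree $\ge 3$ and arbitrarily large $\LS_+$-rank certainly exist (the line graph of $K_{2\ell+1}$ is $(4\ell-2)$-regular with $r_+=\ell$, cf.\ \cite{StephenT99}), so this is not a removable boundary case; on the $x_v=0$ branch you are genuinely stuck at $r_+(G-v)\le(n-1)/3$. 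The Lipt\'ak--Tun\c{c}el argument does not try to control the deletion branch at all: the factor of $3$ comes from charging each $\LS_+$ round to a \emph{destruction} $G\ominus v$, which removes at least three vertices once minimum degree is at least $2$, and the substantive content is a lemma guaranteeing that such a charge is always available. That is the idea missing from your plan.
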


Given a graph $G$, let $\alpha(G)$ denote the \emph{stability number} of a graph $G$, which is defined to be the cardinality of the largest stable set in $G$. Besides Theorem~\ref{thm101}, there are also classical upper bounds for $r_+(G)$ that depend on $\alpha(G)$. In particular, results of Lov\'{a}sz and Schrijver~\cite{LovaszS91} imply that, for every graph $G$,
\[
r_+(G)\le \alpha(G)
\qquad\text{and}\qquad
r_+(G)\le |V(G)|-\alpha(G)-1.
\]
(Note that the second bound already follows for a weaker lift-and-project operator that is equivalent to removing the positive semidefiniteness constraint from the definition of $\LS_+$.) Taken together, these two bounds imply that $r_+(G) \leq \lfloor \frac{|V(G)|-1}{2} \rfloor$. Theorem~\ref{thm101} sharpens this upper bound to $\lfloor \frac{|V(G)|}{3} \rfloor$. One of the main messages of the present work is that this bound is sharp and is attained by infinitely many graphs. 

Given a positive integer $\ell$, let $n_+(\ell)$ denote the smallest number of vertices on which there exists a graph with $\LS_+$-rank $\ell$. Then, Theorem~\ref{thm101} readily implies that $n_+(\ell) \geq 3\ell$ for every positive integer $\ell$. Thus, we say that a graph $G$ is \emph{$\ell$-minimal} if $r_+(G) = \ell$ and $|V(G)| = 3\ell$.

Well, for which $\ell$ do $\ell$-minimal graphs exist? Again, since $\FRAC(G) = \STAB(G)$ if and only if $G$ is bipartite, it is easy to see that $n_+(1) = 3$, attained by the $3$-cycle. Lipt{\'a}k and the second author~\cite{LiptakT03} showed that $G_{2,1}$ from  Figure~\ref{figKnownEG} is $2$-minimal, and went on to conjecture that $\ell$-minimal graphs exist for every positive integer $\ell$. Subsequently, Escalante, Montelar, and Nasini~\cite{EscalanteMN06} showed that there is only one other $2$-minimal graph ($G_{2,2}$ from Figure~\ref{figKnownEG}, also see~\cite[discussion following Proposition 21]{AuT24b}), as well as discovered the first known $3$-minimal graph ($G_{3,1}$ from Figure~\ref{figKnownEG}). Then, after nearly two decades of relatively little progress on this front, the authors~\cite{AuT24b} recently discovered the first known $4$-minimal graph ($G_{4,1}$ from Figure~\ref{figKnownEG}), which implies the existence of several other new $3$- and $4$-minimal graphs. 

\def\y{0.70}
\def\sc{1.7}
\def\x{180}
\def\z{360/4}

\begin{figure}[htbp]
\begin{center}
\begin{tabular}{cccc}

\begin{tikzpicture}[scale=\sc, thick,main node/.style={circle,  minimum size=1.5mm, fill=black, inner sep=0.1mm,draw,font=\tiny\sffamily}]

\node[main node] at ({cos(\x+(0)*\z)},{sin(\x+(0)*\z)}) (1) {};
\node[main node] at ({cos(\x+(1)*\z)},{sin(\x+(1)*\z)}) (2) {};
\node[main node] at ({cos(\x+(2)*\z)},{sin(\x+(2)*\z)}) (3) {};

\node[main node] at ({ \y* cos(\x+(3)*\z) + (1-\y)*cos(\x+(2)*\z)},{ \y* sin(\x+(3)*\z) + (1-\y)*sin(\x+(2)*\z)}) (4) {};
\node[main node] at ({cos(\x+(3)*\z)},{sin(\x+(3)*\z)}) (5) {};
\node[main node] at ({ \y* cos(\x+(3)*\z) + (1-\y)*cos(\x+(4)*\z)},{ \y* sin(\x+(3)*\z) + (1-\y)*sin(\x+(4)*\z)}) (6) {};

 \path[every node/.style={font=\sffamily}]
(1) edge (2)
(2) edge (3)
(3) edge (1)
(4) edge (5)
(5) edge (6)
(4) edge (3)
(6) edge (1)
(6) edge (2);
\end{tikzpicture}
&
\begin{tikzpicture}[scale=\sc, thick,main node/.style={circle,  minimum size=1.5mm, fill=black, inner sep=0.1mm,draw,font=\tiny\sffamily}]

\node[main node] at ({cos(\x+(0)*\z)},{sin(\x+(0)*\z)}) (1) {};
\node[main node] at ({cos(\x+(1)*\z)},{sin(\x+(1)*\z)}) (2) {};
\node[main node] at ({cos(\x+(2)*\z)},{sin(\x+(2)*\z)}) (3) {};

\node[main node] at ({ \y* cos(\x+(3)*\z) + (1-\y)*cos(\x+(2)*\z)},{ \y* sin(\x+(3)*\z) + (1-\y)*sin(\x+(2)*\z)}) (4) {};
\node[main node] at ({cos(\x+(3)*\z)},{sin(\x+(3)*\z)}) (5) {};
\node[main node] at ({ \y* cos(\x+(3)*\z) + (1-\y)*cos(\x+(4)*\z)},{ \y* sin(\x+(3)*\z) + (1-\y)*sin(\x+(4)*\z)}) (6) {};

 \path[every node/.style={font=\sffamily}]
(1) edge (2)
(2) edge (3)
(3) edge (1)
(4) edge (5)
(5) edge (6)
(4) edge (2)
(4) edge (3)
(6) edge (1)
(6) edge (2);
\end{tikzpicture}

&

\def\x{270 - 360/5}
\def\z{360/5}
\begin{tikzpicture}[scale=\sc, thick,main node/.style={circle,  minimum size=1.5mm, fill=black, inner sep=0.1mm,draw,font=\tiny\sffamily}]
\node[main node] at ({cos(\x+(0)*\z)},{sin(\x+(0)*\z)}) (1) {};
\node[main node] at ({cos(\x+(1)*\z)},{sin(\x+(1)*\z)}) (2) {};
\node[main node] at ({cos(\x+(2)*\z)},{sin(\x+(2)*\z)}) (3) {};

\node[main node] at ({ \y* cos(\x+(3)*\z) + (1-\y)*cos(\x+(2)*\z)},{ \y* sin(\x+(3)*\z) + (1-\y)*sin(\x+(2)*\z)}) (4) {};
\node[main node] at ({cos(\x+(3)*\z)},{sin(\x+(3)*\z)}) (5) {};
\node[main node] at ({ \y* cos(\x+(3)*\z) + (1-\y)*cos(\x+(4)*\z)},{ \y* sin(\x+(3)*\z) + (1-\y)*sin(\x+(4)*\z)}) (6) {};

\node[main node] at ({ \y* cos(\x+(4)*\z) + (1-\y)*cos(\x+(3)*\z)},{ \y* sin(\x+(4)*\z) + (1-\y)*sin(\x+(3)*\z)}) (7) {};
\node[main node] at ({cos(\x+(4)*\z)},{sin(\x+(4)*\z)}) (8) {};
\node[main node] at ({ \y* cos(\x+(4)*\z) + (1-\y)*cos(\x+(5)*\z)},{ \y* sin(\x+(4)*\z) + (1-\y)*sin(\x+(5)*\z)}) (9) {};

 \path[every node/.style={font=\sffamily}]
(1) edge (3)
(1) edge (2)
(2) edge (3)
(4) edge (5)
(5) edge (6)
(7) edge (8)
(8) edge (9)
(4) edge (2)
(6) edge (1)
(6) edge (3)
(7) edge (1)
(7) edge (2)
(9) edge (3)
(6) edge (7);
\end{tikzpicture}

&

\def\x{270 - 360/6}
\def\z{360/6}

\begin{tikzpicture}[scale=\sc, thick,main node/.style={circle,  minimum size=1.5mm, fill=black, inner sep=0.1mm,draw,font=\tiny\sffamily}]

\node[main node] at ({cos(\x+(0)*\z)},{sin(\x+(0)*\z)}) (1) {};
\node[main node] at ({cos(\x+(1)*\z)},{sin(\x+(1)*\z)}) (2) {};
\node[main node] at ({cos(\x+(2)*\z)},{sin(\x+(2)*\z)}) (3) {};

\node[main node] at ({ \y* cos(\x+(3)*\z) + (1-\y)*cos(\x+(2)*\z)},{ \y* sin(\x+(3)*\z) + (1-\y)*sin(\x+(2)*\z)}) (4) {};
\node[main node] at ({cos(\x+(3)*\z)},{sin(\x+(3)*\z)}) (5) {};
\node[main node] at ({ \y* cos(\x+(3)*\z) + (1-\y)*cos(\x+(4)*\z)},{ \y* sin(\x+(3)*\z) + (1-\y)*sin(\x+(4)*\z)}) (6) {};

\node[main node] at ({ \y* cos(\x+(4)*\z) + (1-\y)*cos(\x+(3)*\z)},{ \y* sin(\x+(4)*\z) + (1-\y)*sin(\x+(3)*\z)}) (7) {};
\node[main node] at ({cos(\x+(4)*\z)},{sin(\x+(4)*\z)}) (8) {};
\node[main node] at ({ \y* cos(\x+(4)*\z) + (1-\y)*cos(\x+(5)*\z)},{ \y* sin(\x+(4)*\z) + (1-\y)*sin(\x+(5)*\z)}) (9) {};

\node[main node] at ({ \y* cos(\x+(5)*\z) + (1-\y)*cos(\x+(4)*\z)},{ \y* sin(\x+(5)*\z) + (1-\y)*sin(\x+(4)*\z)}) (10) {};
\node[main node] at ({cos(\x+(5)*\z)},{sin(\x+(5)*\z)}) (11) {};
\node[main node] at ({ \y* cos(\x+(5)*\z) + (1-\y)*cos(\x+(6)*\z)},{ \y* sin(\x+(5)*\z) + (1-\y)*sin(\x+(6)*\z)}) (12) {};

 \path[every node/.style={font=\sffamily}]
(1) edge (2)
(1) edge (3)
(2) edge (3)
(4) edge (5)
(5) edge (6)
(7) edge (8)
(8) edge (9)
(10) edge (11)
(11) edge (12)
(4) edge (9)
(7) edge (12)
(10) edge (6)
(1) edge (4)
(2) edge (4)
(2) edge (6)
(3) edge (6)
(2) edge (7)
(3) edge (7)
(3) edge (9)
(1) edge (9)
(3) edge (10)
(1) edge (10)
(1) edge (12)
(2) edge (12);
\end{tikzpicture}

\\
$G_{2,1}$ & $G_{2,2}$ & $G_{3,1}$  & $G_{4,1}$ 
\end{tabular}
\end{center}
\caption{Several known $\ell$-minimal graphs due to~\cite{LiptakT03, EscalanteMN06, AuT24b}}\label{figKnownEG}
\end{figure}
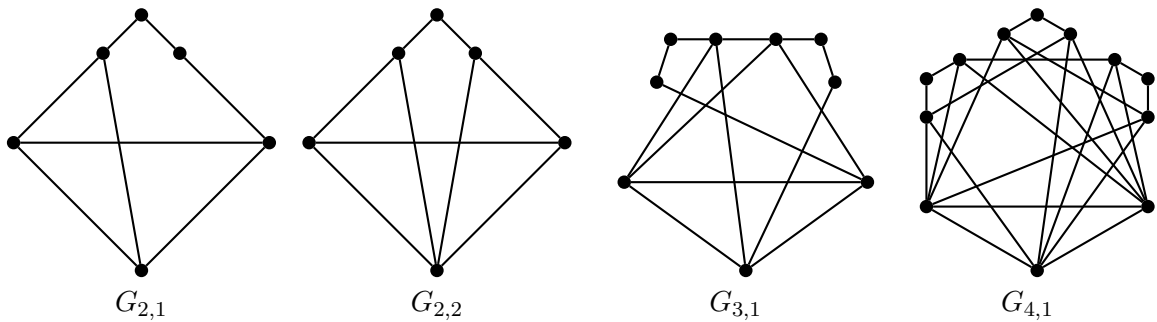

As for the asymptotic behaviour of $n_+(\ell)$, Stephen and the second author~\cite{StephenT99} showed that the line graph of the complete graph on $2\ell+1$ vertices has $\LS_+$-rank $\ell$, which implies that $n_+(\ell) \leq 2\ell^2 + \ell$ in general. Recently, the authors~\cite{AuT24} discovered a family of graphs which showed that $n_+(\ell) \leq 16\ell$ for every positive integer $\ell$, thus implying that $n_+(\ell) = \Theta(\ell)$ asymptotically.

In this work, we show that $n_+(\ell) = 3\ell$ for every positive integer $\ell$, which settles the aforementioned conjecture in~\cite{LiptakT03} affirmatively. Here is one such family of $\ell$-minimal graphs. Given an integer $k \geq 3$, define the graph $\A_{k}$ where
\begin{align*}
V(\A_{k}) \ce {}& \set{1,2,3} \cup \set{i_0, i_1, i_2  :  4\leq i \leq k},\\
E(\A_{k}) \ce {}& \set{ \set{1,2}, \set{2,3}, \set{1,3}} \cup \\
& \set{ \set{i_0,i_1}, \set{i_0,i_2}, \set{i_1,2}, \set{i_1,3}, \set{i_2,1}  :  4 \leq i \leq k} \cup\\
& \set{ \set{i_2, j_1} : 4 \leq i < j \leq k}.
\end{align*}

\def\y{0.7}
\def\sc{2.2}

\begin{figure}[htbp]
\begin{center}
\begin{tabular}{ccc}

\def\x{270 - 360/3}
\def\z{360/3}

\begin{tikzpicture}
[scale=\sc, thick,main node/.style={circle, minimum size=3.8mm, inner sep=0.1mm,draw,font=\tiny\sffamily}]
\node[main node] at ({cos(\x+(0)*\z)},{sin(\x+(0)*\z)}) (1) {$1$};
\node[main node] at ({cos(\x+(1)*\z)},{sin(\x+(1)*\z)}) (2) {$2$};
\node[main node] at ({cos(\x+(2)*\z)},{sin(\x+(2)*\z)}) (3) {$3$};

 \path[every node/.style={font=\sffamily}]
(1) edge (2)
(2) edge (3)
(1) edge (3);
\end{tikzpicture}

&
\def\x{270 - 360/4}
\def\z{360/4}

\begin{tikzpicture}
[scale=\sc, thick,main node/.style={circle, minimum size=3.8mm, inner sep=0.1mm,draw,font=\tiny\sffamily}]
\node[main node] at ({cos(\x+(0)*\z)},{sin(\x+(0)*\z)}) (1) {$1$};
\node[main node] at ({cos(\x+(1)*\z)},{sin(\x+(1)*\z)}) (2) {$2$};
\node[main node] at ({cos(\x+(2)*\z)},{sin(\x+(2)*\z)}) (3) {$3$};

\node[main node] at ({ \y* cos(\x+(3)*\z) + (1-\y)*cos(\x+(2)*\z)},{ \y* sin(\x+(3)*\z) + (1-\y)*sin(\x+(2)*\z)}) (4) {$4_1$};
\node[main node] at ({cos(\x+(3)*\z)},{sin(\x+(3)*\z)}) (5) {$4_0$};
\node[main node] at ({ \y* cos(\x+(3)*\z) + (1-\y)*cos(\x+(4)*\z)},{ \y* sin(\x+(3)*\z) + (1-\y)*sin(\x+(4)*\z)}) (6) {$4_2$};

 \path[every node/.style={font=\sffamily}]
(1) edge (2)
(2) edge (3)
(1) edge (3)
(4) edge (5)
(5) edge (6)
(4) edge (2)
(4) edge (3)
(6) edge (1);
\end{tikzpicture}

&
\def\x{270 - 360/5}
\def\z{360/5}

\begin{tikzpicture}
[scale=\sc, thick,main node/.style={circle, minimum size=3.8mm, inner sep=0.1mm,draw,font=\tiny\sffamily}]
\node[main node] at ({cos(\x+(0)*\z)},{sin(\x+(0)*\z)}) (1) {$1$};
\node[main node] at ({cos(\x+(1)*\z)},{sin(\x+(1)*\z)}) (2) {$2$};
\node[main node] at ({cos(\x+(2)*\z)},{sin(\x+(2)*\z)}) (3) {$3$};

\node[main node] at ({ \y* cos(\x+(3)*\z) + (1-\y)*cos(\x+(2)*\z)},{ \y* sin(\x+(3)*\z) + (1-\y)*sin(\x+(2)*\z)}) (4) {$4_1$};
\node[main node] at ({cos(\x+(3)*\z)},{sin(\x+(3)*\z)}) (5) {$4_0$};
\node[main node] at ({ \y* cos(\x+(3)*\z) + (1-\y)*cos(\x+(4)*\z)},{ \y* sin(\x+(3)*\z) + (1-\y)*sin(\x+(4)*\z)}) (6) {$4_2$};

\node[main node] at ({ \y* cos(\x+(4)*\z) + (1-\y)*cos(\x+(3)*\z)},{ \y* sin(\x+(4)*\z) + (1-\y)*sin(\x+(3)*\z)}) (7) {$5_1$};
\node[main node] at ({cos(\x+(4)*\z)},{sin(\x+(4)*\z)}) (8) {$5_0$};
\node[main node] at ({ \y* cos(\x+(4)*\z) + (1-\y)*cos(\x+(5)*\z)},{ \y* sin(\x+(4)*\z) + (1-\y)*sin(\x+(5)*\z)}) (9) {$5_2$};

 \path[every node/.style={font=\sffamily}]
(1) edge (2)
(2) edge (3)
(1) edge (3)
(4) edge (5)
(5) edge (6)
(7) edge (8)
(8) edge (9)
(4) edge (2)
(4) edge (3)
(6) edge (1)
(7) edge (2)
(7) edge (3)
(9) edge (1)
(6) edge (7);
\end{tikzpicture}
\\
$\A_3$ & $\A_4$ & $\A_5$\\
\\
\def\x{270 - 360/6}
\def\z{360/6}

\begin{tikzpicture}
[scale=\sc, thick,main node/.style={circle, minimum size=3.8mm, inner sep=0.1mm,draw,font=\tiny\sffamily}]
\node[main node] at ({cos(\x+(0)*\z)},{sin(\x+(0)*\z)}) (1) {$1$};
\node[main node] at ({cos(\x+(1)*\z)},{sin(\x+(1)*\z)}) (2) {$2$};
\node[main node] at ({cos(\x+(2)*\z)},{sin(\x+(2)*\z)}) (3) {$3$};

\node[main node] at ({ \y* cos(\x+(3)*\z) + (1-\y)*cos(\x+(2)*\z)},{ \y* sin(\x+(3)*\z) + (1-\y)*sin(\x+(2)*\z)}) (4) {$4_1$};
\node[main node] at ({cos(\x+(3)*\z)},{sin(\x+(3)*\z)}) (5) {$4_0$};
\node[main node] at ({ \y* cos(\x+(3)*\z) + (1-\y)*cos(\x+(4)*\z)},{ \y* sin(\x+(3)*\z) + (1-\y)*sin(\x+(4)*\z)}) (6) {$4_2$};

\node[main node] at ({ \y* cos(\x+(4)*\z) + (1-\y)*cos(\x+(3)*\z)},{ \y* sin(\x+(4)*\z) + (1-\y)*sin(\x+(3)*\z)}) (7) {$5_1$};
\node[main node] at ({cos(\x+(4)*\z)},{sin(\x+(4)*\z)}) (8) {$5_0$};
\node[main node] at ({ \y* cos(\x+(4)*\z) + (1-\y)*cos(\x+(5)*\z)},{ \y* sin(\x+(4)*\z) + (1-\y)*sin(\x+(5)*\z)}) (9) {$5_2$};

\node[main node] at ({ \y* cos(\x+(5)*\z) + (1-\y)*cos(\x+(4)*\z)},{ \y* sin(\x+(5)*\z) + (1-\y)*sin(\x+(4)*\z)}) (10) {$6_1$};
\node[main node] at ({cos(\x+(5)*\z)},{sin(\x+(5)*\z)}) (11) {$6_0$};
\node[main node] at ({ \y* cos(\x+(5)*\z) + (1-\y)*cos(\x+(6)*\z)},{ \y* sin(\x+(5)*\z) + (1-\y)*sin(\x+(6)*\z)}) (12) {$6_2$};

 \path[every node/.style={font=\sffamily}]
(1) edge (2)
(2) edge (3)
(1) edge (3)
(4) edge (5)
(5) edge (6)
(7) edge (8)
(8) edge (9)
(10) edge (11)
(11) edge (12)
(4) edge (2)
(4) edge (3)
(6) edge (1)
(7) edge (2)
(7) edge (3)
(9) edge (1)
(10) edge (2)
(10) edge (3)
(12) edge (1)
(6) edge (7)
(6) edge (10)
(9) edge (10);
\end{tikzpicture}

&

\def\x{270 - 360/7}
\def\z{360/7}

\begin{tikzpicture}
[scale=\sc, thick,main node/.style={circle, minimum size=3.8mm, inner sep=0.1mm,draw,font=\tiny\sffamily}]
\node[main node] at ({cos(\x+(0)*\z)},{sin(\x+(0)*\z)}) (1) {$1$};
\node[main node] at ({cos(\x+(1)*\z)},{sin(\x+(1)*\z)}) (2) {$2$};
\node[main node] at ({cos(\x+(2)*\z)},{sin(\x+(2)*\z)}) (3) {$3$};

\node[main node] at ({ \y* cos(\x+(3)*\z) + (1-\y)*cos(\x+(2)*\z)},{ \y* sin(\x+(3)*\z) + (1-\y)*sin(\x+(2)*\z)}) (4) {$4_1$};
\node[main node] at ({cos(\x+(3)*\z)},{sin(\x+(3)*\z)}) (5) {$4_0$};
\node[main node] at ({ \y* cos(\x+(3)*\z) + (1-\y)*cos(\x+(4)*\z)},{ \y* sin(\x+(3)*\z) + (1-\y)*sin(\x+(4)*\z)}) (6) {$4_2$};

\node[main node] at ({ \y* cos(\x+(4)*\z) + (1-\y)*cos(\x+(3)*\z)},{ \y* sin(\x+(4)*\z) + (1-\y)*sin(\x+(3)*\z)}) (7) {$5_1$};
\node[main node] at ({cos(\x+(4)*\z)},{sin(\x+(4)*\z)}) (8) {$5_0$};
\node[main node] at ({ \y* cos(\x+(4)*\z) + (1-\y)*cos(\x+(5)*\z)},{ \y* sin(\x+(4)*\z) + (1-\y)*sin(\x+(5)*\z)}) (9) {$5_2$};

\node[main node] at ({ \y* cos(\x+(5)*\z) + (1-\y)*cos(\x+(4)*\z)},{ \y* sin(\x+(5)*\z) + (1-\y)*sin(\x+(4)*\z)}) (10) {$6_1$};
\node[main node] at ({cos(\x+(5)*\z)},{sin(\x+(5)*\z)}) (11) {$6_0$};
\node[main node] at ({ \y* cos(\x+(5)*\z) + (1-\y)*cos(\x+(6)*\z)},{ \y* sin(\x+(5)*\z) + (1-\y)*sin(\x+(6)*\z)}) (12) {$6_2$};

\node[main node] at ({ \y* cos(\x+(6)*\z) + (1-\y)*cos(\x+(5)*\z)},{ \y* sin(\x+(6)*\z) + (1-\y)*sin(\x+(5)*\z)}) (13) {$7_1$};
\node[main node] at ({cos(\x+(6)*\z)},{sin(\x+(6)*\z)}) (14) {$7_0$};
\node[main node] at ({ \y* cos(\x+(6)*\z) + (1-\y)*cos(\x+(7)*\z)},{ \y* sin(\x+(6)*\z) + (1-\y)*sin(\x+(7)*\z)}) (15) {$7_2$};

 \path[every node/.style={font=\sffamily}]
(1) edge (2)
(2) edge (3)
(1) edge (3)
(4) edge (5)
(5) edge (6)
(7) edge (8)
(8) edge (9)
(10) edge (11)
(11) edge (12)
(13) edge (14)
(14) edge (15)
(4) edge (2)
(4) edge (3)
(6) edge (1)
(7) edge (2)
(7) edge (3)
(9) edge (1)
(10) edge (2)
(10) edge (3)
(12) edge (1)
(13) edge (2)
(13) edge (3)
(15) edge (1)
(6) edge (7)
(6) edge (10)
(9) edge (10)
(6) edge (13)
(9) edge (13)
(12) edge (13);
\end{tikzpicture}

&

\def\x{270 - 360/8}
\def\z{360/8}

\begin{tikzpicture}
[scale=\sc, thick, main node/.style={circle, minimum size=3.8mm, inner sep=0.1mm,draw,font=\tiny\sffamily}]
\node[main node] at ({cos(\x+(0)*\z)},{sin(\x+(0)*\z)}) (1) {$1$};
\node[main node] at ({cos(\x+(1)*\z)},{sin(\x+(1)*\z)}) (2) {$2$};
\node[main node] at ({cos(\x+(2)*\z)},{sin(\x+(2)*\z)}) (3) {$3$};

\node[main node] at ({ \y* cos(\x+(3)*\z) + (1-\y)*cos(\x+(2)*\z)},{ \y* sin(\x+(3)*\z) + (1-\y)*sin(\x+(2)*\z)}) (4) {$4_1$};
\node[main node] at ({cos(\x+(3)*\z)},{sin(\x+(3)*\z)}) (5) {$4_0$};
\node[main node] at ({ \y* cos(\x+(3)*\z) + (1-\y)*cos(\x+(4)*\z)},{ \y* sin(\x+(3)*\z) + (1-\y)*sin(\x+(4)*\z)}) (6) {$4_2$};

\node[main node] at ({ \y* cos(\x+(4)*\z) + (1-\y)*cos(\x+(3)*\z)},{ \y* sin(\x+(4)*\z) + (1-\y)*sin(\x+(3)*\z)}) (7) {$5_1$};
\node[main node] at ({cos(\x+(4)*\z)},{sin(\x+(4)*\z)}) (8) {$5_0$};
\node[main node] at ({ \y* cos(\x+(4)*\z) + (1-\y)*cos(\x+(5)*\z)},{ \y* sin(\x+(4)*\z) + (1-\y)*sin(\x+(5)*\z)}) (9) {$5_2$};

\node[main node] at ({ \y* cos(\x+(5)*\z) + (1-\y)*cos(\x+(4)*\z)},{ \y* sin(\x+(5)*\z) + (1-\y)*sin(\x+(4)*\z)}) (10) {$6_1$};
\node[main node] at ({cos(\x+(5)*\z)},{sin(\x+(5)*\z)}) (11) {$6_0$};
\node[main node] at ({ \y* cos(\x+(5)*\z) + (1-\y)*cos(\x+(6)*\z)},{ \y* sin(\x+(5)*\z) + (1-\y)*sin(\x+(6)*\z)}) (12) {$6_2$};

\node[main node] at ({ \y* cos(\x+(6)*\z) + (1-\y)*cos(\x+(5)*\z)},{ \y* sin(\x+(6)*\z) + (1-\y)*sin(\x+(5)*\z)}) (13) {$7_1$};
\node[main node] at ({cos(\x+(6)*\z)},{sin(\x+(6)*\z)}) (14) {$7_0$};
\node[main node] at ({ \y* cos(\x+(6)*\z) + (1-\y)*cos(\x+(7)*\z)},{ \y* sin(\x+(6)*\z) + (1-\y)*sin(\x+(7)*\z)}) (15) {$7_2$};

\node[main node] at ({ \y* cos(\x+(7)*\z) + (1-\y)*cos(\x+(6)*\z)},{ \y* sin(\x+(7)*\z) + (1-\y)*sin(\x+(6)*\z)}) (16) {$8_1$};
\node[main node] at ({cos(\x+(7)*\z)},{sin(\x+(7)*\z)}) (17) {$8_0$};
\node[main node] at ({ \y* cos(\x+(7)*\z) + (1-\y)*cos(\x+(8)*\z)},{ \y* sin(\x+(7)*\z) + (1-\y)*sin(\x+(8)*\z)}) (18) {$8_2$};

 \path[every node/.style={font=\sffamily}]
(1) edge (2)
(2) edge (3)
(1) edge (3)
(4) edge (5)
(5) edge (6)
(7) edge (8)
(8) edge (9)
(10) edge (11)
(11) edge (12)
(13) edge (14)
(14) edge (15)
(16) edge (17)
(17) edge (18)
(4) edge (2)
(4) edge (3)
(6) edge (1)
(7) edge (2)
(7) edge (3)
(9) edge (1)
(10) edge (2)
(10) edge (3)
(12) edge (1)
(13) edge (2)
(13) edge (3)
(15) edge (1)
(16) edge (2)
(16) edge (3)
(18) edge (1)
(6) edge (7)
(6) edge (10)
(9) edge (10)
(6) edge (13)
(9) edge (13)
(12) edge (13)
(6) edge (16)
(9) edge (16)
(12) edge (16)
(15) edge (16);
\end{tikzpicture}

\\
$\A_6$ & $\A_7$ & $\A_8$
\end{tabular}
\caption{Several members of the family of graphs $\A_{k}$}\label{figAk}
\end{center}
\end{figure}

Figure~\ref{figAk} gives the drawings of $\A_{k}$ for $3 \leq k \leq 8$. Then we have the following:

\begin{proposition}\label{prop502}
For every $k \geq 3$, $\A_k$ is $(k-2)$-minimal.
\end{proposition}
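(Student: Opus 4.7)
The plan is to apply Corollary~\ref{cor501} with $\ell \ce k-2$. Since $|V(\A_k)| = 3 + 3(k-3) = 3(k-2)$, the graph already has the right number of vertices, so it suffices to verify two things: that $\A_k \in \hat{\K}_{k, k-3}$ and that $\omega(\A_k) \leq 3$.

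First, I would exhibit $\A_k$ as an element of $\K_{k, k-3}$ by giving an explicit construction from $K_k$. Starting with the complete graph on $\set{1, 2, \ldots, k}$, I would successively $2$-stretch the vertices $4, 5, \ldots, k$ in order, where when stretching vertex $i$ I use
\[
A_1 \ce \set{2, 3} \cup \set{j_2 : 4 \leq j < i}, \qquad A_2 \ce \set{1} \cup \set{j : i < j \leq k}.
\]
A routine verification shows $A_1 \cup A_2$ equals the current neighborhood of $i$, and that the resulting edge set agrees with the definition of $\A_k$. This also tells us $D(\A_k) = \set{4, \ldots, k}$, so $\A_k \in \K_{k, k-3}$.

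Next, to check the $\hat{\K}$ condition~\eqref{eqhatKnd}, I would read directly off the edge list: for each pair $4 \leq i < j \leq k$, the only edge among $\set{i_1, j_1}, \set{i_1, j_2}, \set{i_2, j_1}, \set{i_2, j_2}$ appearing in $E(\A_k)$ is $\set{i_2, j_1\}$, so $\A_k \in \hat{\K}_{k, k-3}$.

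For the clique number, I would argue by a short case split. Each hub vertex $i_0$ has exactly two neighbors ($i_1$ and $i_2$), which are non-adjacent, so no $i_0$ lies in any triangle. Hence any $K_4$ in $\A_k$ must consist only of vertices from $\set{1,2,3} \cup \set{i_1, i_2 : 4 \leq i \leq k}$. Among the wing vertices, the only edges are $\set{i_2, j_1}$ for $i < j$, and this subgraph is bipartite, hence contains no triangle. Moreover, each $i_1$ is adjacent in $\set{1,2,3}$ only to $\set{2,3}$, and each $i_2$ only to $\set{1\}$. Working through the cases by how many of the four clique vertices come from $\set{1,2,3}$ versus the wing vertices, and using that $\set{i_1, j_1\} \notin E$ and $\set{i_2, j_2\} \notin E$, rules out a $K_4$ in each case. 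Combined with the triangle $\set{1,2,3}$, this yields $\omega(\A_k) = 3$.

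Applying Corollary~\ref{cor501} then gives $r_+(\A_k) = k-2$, and since $|V(\A_k)| = 3(k-2)$, we conclude $\A_k$ is $(k-2)$-minimal. The only step requiring genuine care is the clique-number analysis, but the bipartite structure of the wing-vertex subgraph and the restricted connections to $\set{1,2,3}$ make it straightforward; the stretching verification is bookkeeping and the $\hat{\K}$ condition is immediate from inspection.
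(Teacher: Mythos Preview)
Your proposal is correct and follows essentially the same approach as the paper: both invoke Corollary~\ref{cor501}, verify $\A_k \in \hat{\K}_{k,k-3}$ via the single edge $\set{i_2,j_1}$ between stretched indices, and rule out a $K_4$ by first discarding hub vertices and then exploiting that $\set{i_2 : 4 \leq i \leq k}$ is stable. The paper's clique-number argument eliminates vertex~$1$ next (since its non-$\{2,3\}$ neighbors form a stable set) and then notes the only remaining triangles are $\set{2,3,i_1}$, whereas you organize the same facts as a case split on how many of $1,2,3$ appear in a hypothetical $K_4$; this is a cosmetic difference only.
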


We detail the proof of Proposition~\ref{prop502}, as well as establish the $\ell$-minimality of many other graphs, in Section~\ref{sec04}. Notably, the $\ell$-minimal graphs we present herein are all \emph{stretched cliques} --- graphs which can be obtained by starting with a complete graph, and applying a number of vertex-stretching operations, which we define in Section~\ref{sec02}. We remark that our vertex-stretching operation is a slight variant of that defined in~\cite{AuT24b}, and is closely related to similar operations studied earlier in~\cite{LiptakT03, AguileraEF14, BianchiENT17}. More generally, a number of stretched cliques have also been studied as instances of interest for other lift-and-project hierarchies~\cite{PenaVZ07, DobreV15, LaurentV23, Vargas23}. 

Understanding the behaviour of the $\LS_+$ operator, and in particular, the behaviour of the $\LS_+$-rank under graph operations was a natural research
direction following the seminal paper of Lov\'{a}sz and Schrijver~\cite{LovaszS91}. Related questions about the behaviour of the $\LS_+$-rank under basic graph operations were also raised by Goemans and the second author~\cite{GoemansT01}. There has been some very nice work establishing connections between
lift-and-project operator ranks (for some operators related to $\LS_+$) and graph minor operations especially on the maximum cut problem (see~\cite{Laurent2001,Laurent03,Laurent2003b}). However, as it was shown in ~\cite{LiptakT03}, $\LS_+$-rank behaves rather erratically with respect to
many established graph operations in the context of the stable set problem.  Thus, deeper investigation of the $\LS_+$-rank of graphs was justified. A key piece of such investigations is understanding
the combinatorial structure of minimal obstructions to effectiveness of such convex relaxations of the stable set problem obtained via the $\LS_+$ operator. Importance of
these questions were raised by many others. Notably, in a very well-known survey ``The Sandwich Theorem'' ~\cite{Knuth1994} about the Lov\'{a}sz theta function, Knuth poses six ``perplexing questions.'' Two of these questions involve the effectiveness of the $\LS_+$ operator on the stable set problem. One of the questions asks about what we call here 2-minimal graphs (answered in~\cite{LiptakT03}). One of the main results of this paper answers a more general version of Knuth's question by giving constructions of $2^{\ell -1}$ non-isomorphic $\ell$-minimal graphs for every positive integer $\ell$. 

\subsection{Proof strategy overview and a roadmap}

At a high level, our argument begins on a structural level as we develop a framework for analyzing broad classes of stretched cliques. This larger framework not only allows us to explicitly construct families of $\ell$-minimal graphs such as $\A_k$, but also explains why extremal examples are abundant, which yields additional consequences such as exponentially many non-isomorphic $\ell$-minimal graphs and vertex-transitive graphs with large $\LS_+$-rank.

The proof proceeds as follows. We begin with graphs obtained by stretching vertices in complete graphs, and organize them into the classes $\K_{n,d}$. Inside this family, the subclass $\tilde{\K}_{n,d}$ is especially convenient because the rank inequality $\sum_{i \in V(G)} x_i \leq \alpha(G)$ is a facet-inducing inequality of $\STAB(G)$ in this case.  Finally, because the natural deletion and destruction operations do not preserve $\tilde{\K}_{n,d}$, we further restrict the setting to another subclass of stretched cliques $\hat{\K}_{n,d}$ where the inductive argument carries out nicely. The explicit family $\A_k$ is then obtained as a particularly transparent instance of this general framework.

A core part of our inductive argument is the construction of a witness vector $v(G,\epsilon)$. Very informally, the vector $v(G,\epsilon)$ is a perturbed average of the maximum stable sets of a stretched clique. The perturbation is chosen so that $v(G,\epsilon)$ violates the rank inequality of $G$, while at the same time still lies in the relevant relaxation after many rounds of $\LS_+$. Thus, $v(G,\epsilon)$ serves as the main certificate for our lower bounds on $\LS_+$-rank.

The manuscript is organized as follows: In Section~\ref{sec02}, we define the aforementioned vertex-stretching operation, and mention some properties of the stable set polytopes of stretched cliques. Then we return to analyzing $\LS_+$-relaxations in Section~\ref{sec03}, as we establish the necessary facts --- some regarding general $\LS_+$-relaxations and some specifically applicable to the graphs of our interest --- and build up to our main results (Theorem~\ref{thm409} and Corollary~\ref{cor501}). We then explore the implications of these results in Section~\ref{sec04}. Therein, we prove Proposition~\ref{prop502} and establish the $\ell$-minimality of $\A_k$, and go on to show that there are in fact at least $2^{\ell-1}$ distinct $\ell$-minimal graphs for every positive integer $\ell$ (Theorem~\ref{thm504}). As a consequence of our construction, we also obtain a family of vertex-transitive graphs on $4\ell+8$ vertices with $\LS_+$-rank at least $\ell$ for every odd positive integer $\ell$ (Proposition~\ref{prop508} and Figure~\ref{figBk}). We conclude our manuscript by discussing some relevant remaining open questions in Section~\ref{sec05}.

\section{Stretched cliques and their properties}\label{sec02}

In this section, we revisit the vertex-stretching operation introduced in~\cite{AuT24b} and investigate some graphs that can be obtained by iteratively applying this operation to a complete graph. 

First, we need some graph theoretical notation. Given a positive integer $n$, let $K_n$ denote the \emph{complete graph} on $n$ vertices, with $V(K_n) \ce [n]$ and $E(K_n) \ce \set{ \set{i,j} : 1 \leq i < j \leq n}$. Also, given a graph $G$ and vertex $v \in V(G)$, we define the \emph{(open) neighborhood} of $v$ to be
\[
\Gamma_G(v) \ce \set{ u \in V(G) : \set{u,v} \in E(G)}.
\]
Next, given a set of vertices $S \subseteq V(G)$, we let $G - S$ denote the subgraph of $G$ induced by the vertices $V(G) \setminus S$, and call this the graph obtained from $G$ by the \emph{deletion} of $S$.  (When $S = \set{v}$, we will simply write $G - v$ instead of $G - \set{v}$.) Then we define 
\[
G \ominus v \ce G - (\Gamma_G(v) \cup \set{v})
\]
to be the graph obtained from $G$ by the \emph{destruction} of $v$. Next, given a graph $G$, vertex $v \in V(G)$, and (possibly empty) sets $A_1, \ldots, A_k \subseteq \Gamma_G(v)$ where $\bigcup_{i=1}^k A_i = \Gamma_G(v)$, we define the \emph{stretching} of $v$ to be the following transformation to $G$:
\begin{itemize}
\item
Replace $v$ by $k+1$ vertices: $v_0, v_1, \ldots, v_k$;
\item
For every $j \in [k]$, add an edge between $v_j$ and every vertex in $\set{v_0} \cup A_j$.
\end{itemize}

We remark that the above definition of vertex stretching is a slight variant of that defined in~\cite{AuT24b}, which has an additional requirement that $A_1, \ldots, A_k$ must each be a non-empty and proper subset of $\Gamma_G(v)$. Herein we say that a vertex-stretching operation is \emph{proper} if it satisfies this more restrictive definition. We also call the operation \emph{$k$-stretching} when we need to specify $k$. For example, in Figure~\ref{figVertexStretch}, $G_1 = K_6$, $G_2$ is obtained by $2$-stretching vertex $5$ in $G_1$, and $G_3$ is obtained by $2$-stretching vertex $6$ in $G_2$. 

The following is a key property of the vertex-stretching operation.

\begin{lemma}\label{lem300}
Let $G'$ be a graph obtained from $G$ by stretching a vertex in $G$. Then $r_+(G') \geq r_+(G)$.
\end{lemma}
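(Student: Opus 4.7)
The plan is to construct an affine map $\phi\colon \mR^{V(G)} \to \mR^{V(G')}$ which embeds $\LS_+$-relaxations compatibly. Let $v$ be the stretched vertex and $v_0, v_1, \ldots, v_k$ its replacements. I define
\[
\phi(x)_u \ce x_u \text{ for } u \in V(G) \setminus \{v\}, \qquad \phi(x)_{v_0} \ce 1 - x_v, \qquad \phi(x)_{v_j} \ce x_v \text{ for } j \in [k].
\]
The goal is to establish two facts: (a) $\phi(\LS_+^k(\FRAC(G))) \subseteq \LS_+^k(\FRAC(G'))$ for every $k \geq 0$, and (b) $\phi(x) \in \STAB(G')$ implies $x \in \STAB(G)$. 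Together, setting $\ell \ce r_+(G')$, these yield $\phi(\LS_+^\ell(\FRAC(G))) \subseteq \LS_+^\ell(\FRAC(G')) = \STAB(G')$, hence $\LS_+^\ell(\FRAC(G)) \subseteq \STAB(G)$, so $r_+(G) \leq \ell$.

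I would prove (a) by induction on $k$. The base case $\phi(\FRAC(G)) \subseteq \FRAC(G')$ is a routine check of edge inequalities: $\phi(x)_{v_0} + \phi(x)_{v_j} = 1$ trivially, and $\phi(x)_{v_j} + \phi(x)_u = x_v + x_u \leq 1$ for $u \in A_j$ since $\{u, v\} \in E(G)$. For the inductive step, the essential construction is the homogenization $\widetilde{\Phi}$ of $\phi$, a matrix with columns $\widetilde{\Phi} e_0 = e_0 + e_{v_0}$, $\widetilde{\Phi} e_u = e_u$ for $u \ne v$, and $\widetilde{\Phi} e_v = -e_{v_0} + \sum_{j \in [k]} e_{v_j}$. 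Given a $\widehat{\LS}_+$-certificate $Y$ for $x$, I would take $Y' \ce \widetilde{\Phi} Y \widetilde{\Phi}^\top$. Positive semidefiniteness is inherited, and $Y' e_0 = \binom{1}{\phi(x)}$, $\diag(Y') = Y'e_0$ follow from direct computation. Crucially, $\widetilde{\Phi}^\top e_{v_0} = e_0 - e_v$, $\widetilde{\Phi}^\top e_{v_j} = e_v$ for $j \in [k]$, and $\widetilde{\Phi}^\top e_u = e_u$ for $u \ne v$, so the three types of column conditions $Y'e_{i'}, Y'(e_0 - e_{i'}) \in \cone(\LS_+^{k-1}(\FRAC(G')))$ reduce via $\widetilde{\Phi}(\cone(P)) = \cone(\phi(P))$ and the induction hypothesis to the known facts $Ye_u, Y(e_0 - e_u) \in \cone(\LS_+^{k-1}(\FRAC(G)))$ for $u \in V(G)$.

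For (b), the key observation is that $\phi(x)_{v_0} + \phi(x)_{v_j} = 1$ for every $j \in [k]$, so in any convex decomposition $\phi(x) = \sum_i \lambda_i \chi^{S'_i}$ over stable sets $S'_i$ of $G'$, the edge constraint $[v_0 \in S'_i] + [v_j \in S'_i] \leq 1$ combined with the equality forces $[v_0 \in S'_i] + [v_j \in S'_i] = 1$ for all positively weighted $S'_i$ and all $j$. Thus each such $S'_i$ falls rigidly into one of two types: either $v_0 \in S'_i$ and no $v_j$ is present, or every $v_j \in S'_i$ (and hence, by stability, $S'_i \cap \Gamma_G(v) = \emptyset$) and $v_0 \notin S'_i$. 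Setting $S_i$ to be $S'_i \cap (V(G) \setminus \{v\})$ in the first case and $(S'_i \cap (V(G) \setminus \{v\})) \cup \{v\}$ in the second produces a stable set of $G$, and $x = \sum_i \lambda_i \chi^{S_i}$, so $x \in \STAB(G)$.

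The main obstacle I anticipate is the inductive step of (a), specifically tracking the asymmetric way $\phi$ treats $v_0$ versus $v_1, \ldots, v_k$, which makes the new ``positive'' constraint at $v_0$ in $G'$ translate (via $\widetilde{\Phi}^\top e_{v_0} = e_0 - e_v$) into the already-available ``complement'' constraint $Y(e_0 - e_v)$ in $G$; once this identification is spotted, all six types of column constraints of $Y'$ fall neatly into place.
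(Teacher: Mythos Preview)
Your proof is correct and takes a genuinely different route from the paper. The paper's proof simply cites~\cite[Proposition 14]{AuT24b} for the case when the stretching is proper, and then disposes of the improper case by observing that $G'$ contains an induced subgraph $G''$ that either arises from a proper stretching of $G$ or already contains a copy of $G$ as an induced subgraph; either way $r_+(G') \geq r_+(G'') \geq r_+(G)$. By contrast, you prove the full claim from scratch via an explicit affine embedding $\phi$ and show that it carries $\LS_+^k$-relaxations forward (through the conjugation $Y' = \widetilde{\Phi}\, Y\, \widetilde{\Phi}^\top$) and pulls $\STAB$ back. Your argument is essentially the ``star-homomorphism'' machinery the paper invokes later (see the proof of Lemma~\ref{lem404}, which appeals to~\cite[Proposition 11]{AuT24b}); what you gain is a self-contained, uniform treatment that does not separate proper from improper stretching, while the paper gains brevity by leaning on prior work.
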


\begin{proof}
The case for when the vertex stretching is proper was shown in~\cite[Proposition 14]{AuT24b}, so it remains to prove our claim for when the operation is not proper. Suppose $G'$ is obtained by $k$-stretching the vertex $v \in V(G)$ with $A_1, \ldots, A_k$ satisfying $\bigcup_{i=1}^k A_i = \Gamma_G(v)$. The stretching not being proper implies that $A_i = \Gamma_G(v)$ for some $i \in [k]$, and/or $A_i = \emptyset$ for some $i \in [k]$.

Define $S \ce \set{ i \in [k] : A_i \neq \emptyset}$, and let $G''$ be the subgraph of $G'$ induced by $(V(G) \setminus \set{v}) \cup \set{v_0} \cup \set{v_i : i \in S}$. Since $G''$ is an induced subgraph of $G'$, we have $r_+(G') \geq r_+(G'')$. We next show that  $r_+(G'') \geq r_+(G)$, which implies our claim. If $A_i \neq \Gamma_G(v)$ for every $i \in S$, then $G''$ can be obtained from $G$ by a proper vertex-stretching operation, and so $r_+(G'') \geq r_+(G)$. Otherwise, there exists $j \in S$ where $A_j = \Gamma_G(v)$. In that case, the subgraph of $G''$ induced by the vertices $(V(G) \setminus \set{v}) \cup \set{v_j}$ is isomorphic to $G$, which implies $r_+(G'') \geq r_+(G)$. Thus, our claim follows.
\end{proof}

The relationship between vertex-stretching operations and the $\LS_+$-rank of a graph was first studied in~\cite{LiptakT03}. Among other results, it was shown therein that applying a type-1 stretching operation (which is a proper $2$-stretching of a vertex where $A_1$ and $A_2$ are disjoint) cannot decrease the $\LS_+$-rank of a graph. Similar vertex-stretching operations and their impact on the $\LS_+$-rank of a graph were also analyzed in~\cite{AguileraEF14, BianchiENT17}. More recently, the authors showed that applying a proper $2$-stretching operation to a complete graph on at least $4$ vertices always increases its $\LS_+$-rank from $1$ to $2$. The key breakthrough of this work is that we are now able to prove that, for every $n \geq 4$, it is possible to $2$-stretch $n-3$ vertices in $K_n$ to increase its $\LS_+$-rank from $1$ to $n-2$, which produces an $(n-2)$-minimal graph. These results are detailed in Section~\ref{sec03}.

\def\y{0.7}
\def\sc{2.2}
\def\x{270 - 360/6}
\def\z{360/6}

\begin{figure}[htbp]
\begin{center}
\begin{tabular}{ccc}

\begin{tikzpicture}
[scale=\sc, thick,main node/.style={circle, minimum size=3.8mm, inner sep=0.1mm,draw,font=\tiny\sffamily}]
\node[main node] at ({cos(\x+(0)*\z)},{sin(\x+(0)*\z)}) (1) {$1$};
\node[main node] at ({cos(\x+(1)*\z)},{sin(\x+(1)*\z)}) (2) {$2$};
\node[main node] at ({cos(\x+(2)*\z)},{sin(\x+(2)*\z)}) (3) {$3$};
\node[main node] at ({cos(\x+(3)*\z)},{sin(\x+(3)*\z)}) (4) {$4$};
\node[main node] at ({cos(\x+(4)*\z)},{sin(\x+(4)*\z)}) (5) {$5$};
\node[main node] at ({cos(\x+(5)*\z)},{sin(\x+(5)*\z)}) (6) {$6$};

 \path[every node/.style={font=\sffamily}]
(1) edge (2)
(1) edge (3)
(2) edge (3)
(1) edge (4)
(2) edge (4)
(3) edge (4)
(1) edge (5)
(2) edge (5)
(3) edge (5)
(4) edge (5)
(1) edge (6)
(2) edge (6)
(3) edge (6)
(4) edge (6)
(5) edge (6);
\end{tikzpicture}

&
\scalebox{1}{
\begin{tikzpicture}
[scale=\sc, thick,main node/.style={circle, minimum size=3.8mm, inner sep=0.1mm,draw,font=\tiny\sffamily}]
\node[main node] at ({cos(\x+(0)*\z)},{sin(\x+(0)*\z)}) (1) {$1$};
\node[main node] at ({cos(\x+(1)*\z)},{sin(\x+(1)*\z)}) (2) {$2$};
\node[main node] at ({cos(\x+(2)*\z)},{sin(\x+(2)*\z)}) (3) {$3$};
\node[main node] at ({cos(\x+(3)*\z)},{sin(\x+(3)*\z)}) (4) {$4$};

\node[main node] at ({cos(\x+(4)*\z)},{sin(\x+(4)*\z)}) (5) {$5_0$};
\node[main node] at ({  cos(\x+(4)*\z) + (1-\y)*cos(330)},{ sin(\x+(4)*\z) + (1-\y)*sin(330)}) (6) {$5_1$};
\node[main node] at ({ cos(\x+(4)*\z) + (1-\y)*cos(210},{ sin(\x+(4)*\z) + (1-\y)*sin(210)}) (8) {$5_2$};

\node[main node] at ({cos(\x+(5)*\z)},{sin(\x+(5)*\z)}) (9) {$6$};

 \path[every node/.style={font=\sffamily}]
(1) edge (2)
(1) edge (3)
(2) edge (3)
(1) edge (4)
(2) edge (4)
(3) edge (4)
(6) edge (5)
(8) edge (5)
(1) edge (9)
(2) edge (9)
(3) edge (9)
(4) edge (9)
(3) edge (6)
(4) edge (6)
(9) edge (6)
(1) edge (8)
(2) edge (8)
(9) edge (8);
\end{tikzpicture}
}
&

\scalebox{1}{
\begin{tikzpicture}
[scale=\sc, thick,main node/.style={circle, minimum size=3.8mm, inner sep=0.1mm,draw,font=\tiny\sffamily}]
\node[main node] at ({cos(\x+(0)*\z)},{sin(\x+(0)*\z)}) (1) {$1$};
\node[main node] at ({cos(\x+(1)*\z)},{sin(\x+(1)*\z)}) (2) {$2$};
\node[main node] at ({cos(\x+(2)*\z)},{sin(\x+(2)*\z)}) (3) {$3$};
\node[main node] at ({cos(\x+(3)*\z)},{sin(\x+(3)*\z)}) (4) {$4$};

\node[main node] at ({cos(\x+(4)*\z)},{sin(\x+(4)*\z)}) (5) {$5_0$};
\node[main node] at ({  cos(\x+(4)*\z) + (1-\y)*cos(330)},{ sin(\x+(4)*\z) + (1-\y)*sin(330)}) (6) {$5_1$};
\node[main node] at ({ cos(\x+(4)*\z) + (1-\y)*cos(210},{ sin(\x+(4)*\z) + (1-\y)*sin(210)}) (8) {$5_2$};

\node[main node] at ({cos(\x+(5)*\z)},{sin(\x+(5)*\z)}) (9) {$6_0$};
\node[main node] at ({  cos(\x+(5)*\z) + (1-\y)*cos(30)},{ sin(\x+(5)*\z) + (1-\y)*sin(30)}) (10) {$6_1$};
\node[main node] at ({ cos(\x+(5)*\z) + (1-\y)*cos(270},{  sin(\x+(5)*\z) + (1-\y)*sin(270)}) (11) {$6_2$};

 \path[every node/.style={font=\sffamily}]
(1) edge (2)
(1) edge (3)
(2) edge (3)
(1) edge (4)
(2) edge (4)
(3) edge (4)
(6) edge (5)
(8) edge (5)
(10) edge (9)
(11) edge (9)
(3) edge (6)
(4) edge (6)
(1) edge (8)
(2) edge (8)
(11) edge (1)
(11) edge (2)
(11) edge (3)
(11) edge (4)
(11) edge (6)
(10) edge (3)
(10) edge (8);
\end{tikzpicture}
}
\\
$G_1$ & $G_2$ & $G_3$
\end{tabular}
\caption{Illustrating the vertex-stretching operation}\label{figVertexStretch}
\end{center}
\end{figure}


Thus, we restrict our discussion to $2$-stretching below. In particular, given integers $n \geq 3$ and $d \geq 0$, let $\K_{n,d}$ denote the set of graphs that can be obtained by $2$-stretching $d$ of the $n$ vertices from $K_n$. For example, the graph $\A_k$ introduced in Section~\ref{sec01} and illustrated in Figure~\ref{figAk} belongs to $\K_{k,k-3}$ for every $k \geq 3$. We also introduce some terminology that will ease our subsequent discussion of graphs in $\K_{n,d}$. Given $G \in \K_{n,d}$, let $D(G) \subseteq [n]$ be the set of vertices of $K_n$ which were stretched to obtain $G$. For each $i \in D(G)$, we call $i_0$ a \emph{hub} vertex, and $i_1, i_2$ \emph{wing} vertices. We also call each $i \in [n] \setminus D(G)$ an \emph{unstretched} vertex in $G$. Finally, given an index $i \in [n]$, we define the vertices \emph{associated} with $i$ to be $i_0, i_1, i_2$ if $i \in D(G)$, and just the unstretched vertex $i$ otherwise. Notice that every vertex in $G$ is associated with a unique $i \in [n]$. Also, observe that, given $G \in \K_{n,d}$ and distinct $i,j \in [n]$, it follows from the definition of vertex stretching that there must be at least one edge in $G$ joining a vertex associated with $i$ and a vertex associated with $j$.

\begin{example}\label{eg301}
Consider the graphs in Figure~\ref{figVertexStretch}. First, we have $G_1 \in \K_{6,0}$, $G_2 \in \K_{6,1}$, and $G_3 \in \K_{6,2}$. Also, notice that $D(G_3) = \set{5,6}$, and that the wing vertex $6_2$ is adjacent to a vertex associated with $i$ for every $i \in \set{1,2,3,4,5}$ --- we will revisit this point later on in Example~\ref{eg303}.
\end{example}

The following lemma gives some basic properties of graphs in $\K_{n,d}$.

\begin{lemma}\label{lem302}
Let $G \in \K_{n,d}$ where $n \geq 3$ and $d \geq 0$. Then
\begin{itemize}
\item[(i)]
for every $i \in D(G)$, $G \ominus i_0 \in \K_{n-1,d-1}$;
\item[(ii)]
for every $i \in [n] \setminus D(G)$,  $G - i \in \K_{n-1,d}$;
\item[(iii)]
$\alpha(G) = d+1$.
\end{itemize}
\end{lemma}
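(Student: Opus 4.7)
The plan is to treat the three items in turn, exploiting the recursive construction of graphs in $\K_{n,d}$.

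For part (i), the key observation is that the hub $i_0$ has neighborhood exactly $\{i_1, i_2\}$ in $G$. Indeed, when $i$ is stretched during the construction of $G$, $i_0$ is made adjacent only to $i_1$ and $i_2$, and any subsequent stretching of a different vertex $j$ only alters edges incident to $j$, so no later operation adds a neighbor to $i_0$. Consequently $G \ominus i_0 = G - \{i_0, i_1, i_2\}$. To identify this induced subgraph as an element of $\K_{n-1,d-1}$, I would replay the construction of $G$ starting instead with $K_{n-1}$ on $[n] \setminus \{i\}$: omit the stretching at $i$, and in every remaining stretching, remove the vertices $\{i, i_0, i_1, i_2\}$ from the two defining sets. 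A step-by-step induction on the length of the construction confirms that the intermediate graphs so obtained agree with the corresponding intermediate graphs of the original construction after the deletion of $\{i, i_0, i_1, i_2\}$; in particular, the final graph is exactly $G - \{i_0, i_1, i_2\}$. Part (ii) is handled by the same replaying idea, only simpler: since $i$ is unstretched, deleting $i$ from the initial $K_n$ and from every subsequent defining set preserves all $d$ stretchings and produces $G - i$ as a member of $\K_{n-1,d}$.

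For part (iii), I would prove the two inequalities separately. The lower bound $\alpha(G) \geq d+1$ follows from an explicit stable set. The set $\{j_0 : j \in D(G)\}$ of hubs is stable, since each hub is adjacent only to its own wings and hence to no other hub. If $[n] \setminus D(G)$ is nonempty, adjoining any unstretched vertex (which, for the same reason, is non-adjacent to every hub) gives size $d+1$; otherwise $D(G) = [n]$, and swapping out any single hub $i_0$ in favor of both wings $i_1, i_2$ works, because the wings of a stretched vertex are non-adjacent to each other (no $\{i_1, i_2\}$ edge is added by 2-stretching) and non-adjacent to every other hub. For the upper bound $\alpha(G) \leq d+1$, I would induct on $d$. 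The base case $d = 0$ is $\alpha(K_n) = 1$, and the inductive step reduces to the claim that 2-stretching any vertex $v$ of any graph $H$ raises the stability number by at most one. To establish that claim, I would take a maximum stable set $S'$ of the stretched graph and produce a stable set of $H$ of size at least $|S'|-1$ via a short case analysis on $S' \cap \{v_0, v_1, v_2\}$: when neither wing lies in $S'$, the set $S' \setminus \{v_0\}$ is already contained in $V(H) \setminus \{v\}$ and stable in $H$; when some wing $v_j$ lies in $S'$, its stability forces $A_j \cap S' = \emptyset$, which is enough to conclude that removing the wing(s) of $v$ from $S'$ leaves a stable set in $H$.

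The main obstacle is the bookkeeping in part (i): one must verify that after stripping the three copies of $i$ and adjusting every subsequent defining set accordingly, the parallel sequence of operations on $K_{n-1}$ genuinely consists of legal 2-stretchings, so that the union condition requiring the two defining sets to cover the entire current neighborhood continues to hold at each step. Once this is in place, part (ii) follows by an easier version of the same argument, and part (iii) follows from the direct constructive lower bound together with the inductive upper bound.
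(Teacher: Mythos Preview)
Your approach is essentially what the paper does (it simply says (i) and (ii) follow from the definitions and defers (iii) to an external lemma), and your write-up is considerably more explicit. One small gap to patch in your upper-bound argument for (iii): in the sub-case where \emph{both} wings $v_1,v_2$ lie in $S'$, merely ``removing the wing(s)'' gives a stable set of size $|S'|-2$, not $|S'|-1$. Here you should use that $A_1\cap S'=A_2\cap S'=\emptyset$ forces $\Gamma_H(v)\cap S'=\emptyset$, so $(S'\setminus\{v_1,v_2\})\cup\{v\}$ is stable in $H$ with size $|S'|-1$; with that one-line fix the induction goes through.
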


\begin{proof}
Both (i) and (ii) follow directly from the definition of $\K_{n,d}$ and our definition of vertex stretching. (iii) follows from the proof of~\cite[Lemma 20]{AuT24b} (which does not require that the vertex stretching be proper in its argument).
\end{proof}

Let $\bar{e}$ denote the vector of all-ones (whose dimension will be clear from the context). Given a graph $G$, the inequality $\bar{e}^{\top}x \leq \alpha(G)$ is often known as the \emph{rank inequality} of $G$, and is always valid for $\STAB(G)$. We next describe the graphs $G \in \K_{n,d}$ for which the rank inequality is in fact a facet-inducing inequality for $\STAB(G)$. Given $G \in \K_{n,d}$, $i \in D(G)$, and $\ell \in \set{1,2}$, define $\tilde{\Gamma}_{G}(i_{\ell})$ to be the set of indices $j \in [n] \setminus \set{i}$ where there is an edge between $i_{\ell}$ and a vertex associated with $j$. We define $\tilde{\K}_{n,d} \subseteq \K_{n,d}$ to be the set of graphs $G$ where 
\[
\tilde{\Gamma}_{G}(i_{\ell}) \subset [n] \setminus \set{i}
\]
for every $i \in D(G)$ and for every $\ell \in \set{1,2}$.

\begin{example}\label{eg303}
Let us revisit the graphs from Figure~\ref{figVertexStretch}. First, $G_1 \in \tilde{\K}_{6,0}$ as $D(G_1) = \emptyset$. More generally, we have $\tilde{\K}_{n,0} = \K_{n,0}$ for every $n \geq 3$.

For $G_2$, we have $D(G_2) = \set{5}$, and
\[
\tilde{\Gamma}_{G_2}(5_1) = \set{3,4,6}, \quad \tilde{\Gamma}_{G_2}(5_2) = \set{1,2,6},
\]
both of which are proper subsets of $[6] \setminus \set{5}$. Thus, $G_2 \in \tilde{\K}_{6,1}$. On the other hand (and as mentioned in Example~\ref{eg301}), notice that for $G_3$ we have $\tilde{\Gamma}_{G_3}(6_2) = \set{1,2,3,4,5} = [6] \setminus \set{6}$. Thus, $G_3 \in \K_{6,2} \setminus \tilde{\K}_{6,2}$.
\end{example}

Notice that to obtain a graph $G \in \tilde{\K}_{n,d}$ from $K_n$, every vertex-stretching operation must be proper. Thus,~\cite[Proposition 24]{AuT24b} applies to graphs in $\tilde{\K}_{n,d}$.

\begin{lemma}\label{lem304}
For every graph $G \in \tilde{\K}_{n,d}$, $\bar{e}^{\top}x \leq d+1$ is a facet-inducing inequality for $\STAB(G)$.
\end{lemma}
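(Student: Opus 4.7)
The plan is to reduce the lemma to the prior result \cite[Proposition 24]{AuT24b}, which asserts that any graph obtained from the complete graph $K_n$ by iteratively applying $d$ proper $2$-stretching operations has $\bar{e}^{\top}x \leq \alpha(G) = d+1$ as a facet-inducing inequality of $\STAB(G)$. Validity of the inequality in our setting is immediate from Lemma~\ref{lem302}(iii), so the only substantive task is to confirm that every $G \in \tilde{\K}_{n,d}$ is realizable from $K_n$ via a sequence of $d$ \emph{proper} $2$-stretchings; then the cited proposition delivers the facet property directly.

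To verify this, I would fix an ordering $i^{(1)}, \ldots, i^{(d)}$ in which the vertices of $D(G)$ were stretched to produce $G$, write $G^{(t)}$ for the intermediate graph after the $t$-th stretching, and denote the stretching sets used at step $t$ by $A_1^{(t)}, A_2^{(t)} \subseteq \Gamma_{G^{(t-1)}}(i^{(t)})$. A short induction shows that for each $t$ and each $j \in [n] \setminus \set{i^{(t)}}$, at least one vertex associated with $j$ lies in $\Gamma_{G^{(t-1)}}(i^{(t)})$: this holds trivially in $K_n$ and is preserved under each stretching, because when an index $j$ is stretched at an earlier step $s < t$, the identity $A_1^{(s)} \cup A_2^{(s)} = \Gamma_{G^{(s-1)}}(j)$ forces the still-unstretched $i^{(t)}$ to remain adjacent to at least one $j$-wing. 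Now suppose $A_\ell^{(t)} = \Gamma_{G^{(t-1)}}(i^{(t)})$ for some $\ell$; then immediately after step $t$, the wing $i^{(t)}_\ell$ is adjacent to a vertex associated with every $j \in [n] \setminus \set{i^{(t)}}$. A quick check confirms that all subsequent stretchings preserve such index-level adjacencies (splitting a vertex into a hub and two wings keeps its index unchanged and always routes any pre-existing edge to at least one wing), so in the final graph $G$ we would have $\tilde{\Gamma}_G(i^{(t)}_\ell) = [n] \setminus \set{i^{(t)}}$, contradicting $G \in \tilde{\K}_{n,d}$. Symmetrically, $A_\ell^{(t)} = \emptyset$ forces $A_{3-\ell}^{(t)} = \Gamma_{G^{(t-1)}}(i^{(t)})$ by the union condition, triggering the same contradiction on the other wing. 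Hence both $A_\ell^{(t)}$ are non-empty proper subsets, i.e., the $t$-th stretching is proper.

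With properness of every step established, \cite[Proposition 24]{AuT24b} applies to conclude that $\bar{e}^{\top}x \leq d+1$ is facet-inducing for $\STAB(G)$. The main obstacle really lives inside the cited proposition, which inductively constructs $|V(G)| = n + 2d$ affinely independent incidence vectors of maximum stable sets of $G$: at each step, proper $2$-stretching of $i^{(t)}$ permits extending the inductive family in two genuinely new directions---using the hub $i^{(t)}_0$ alone, versus using both wings $i^{(t)}_1, i^{(t)}_2$ together---and it is precisely the non-emptiness and properness of $A_1^{(t)}, A_2^{(t)}$ that prevents these two directions from collapsing into the affine span of previously constructed extensions.
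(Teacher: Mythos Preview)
Your approach is correct and matches the paper's: the paper also simply observes that every $G \in \tilde{\K}_{n,d}$ is obtained from $K_n$ via proper vertex-stretching operations and then invokes \cite[Proposition~24]{AuT24b}. Your explicit argument for why each stretching step must be proper is more detailed than the paper, which asserts this in a single sentence just before the lemma without further justification.
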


Next, we take a closer look at the graphs in $\K_{n,d} \setminus \tilde{\K}_{n,d}$. The next result helps show that the rank inequality is indeed not a facet-inducing inequality for the stable set polytope for these graphs.

\begin{lemma}\label{lem305}
Let $G \in \K_{n,d}$ where $n \geq 3$ and $d \geq 1$. If $G \not\in \tilde{\K}_{n,d}$, then there exists an edge $\set{u,v} \in E(G)$ consisting of a hub vertex and a wing vertex where $G - \set{u,v} \in \K_{n,d-1}$.
\end{lemma}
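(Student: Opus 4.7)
Since $G \notin \tilde{\K}_{n,d}$, pick $i \in D(G)$ and $\ell \in \set{1,2}$ with $\tilde{\Gamma}_G(i_\ell) = [n]\setminus \set{i}$. My plan is to take $u \ce i_0$ (hub) and $v \ce i_m$ (wing), where $m \in \set{1,2}\setminus \set{\ell}$ is the other wing label. The edge $\set{u,v} = \set{i_0, i_m}$ is automatically a hub-wing edge by the stretching construction, so the entire task reduces to showing $H \ce G - \set{u,v} \in \K_{n,d-1}$. The point of removing the ``other'' wing $i_m$ (rather than the covering wing $i_\ell$) is that the surviving wing $i_\ell$ still has neighbors associated with \emph{every} index $j \neq i$, so $i_\ell$ can plausibly play the role of an unstretched vertex in a graph in $\K_{n,d-1}$.

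My first step would be to invoke Lemma~\ref{lem302}(i), which gives $G \ominus i_0 = G - \set{i_0,i_1,i_2} \in \K_{n-1,d-1}$. I would then fix a concrete sequence of $d-1$ 2-stretching operations on the indices in $D(G)\setminus \set{i}$, with stretching sets $B^{(j)}_1, B^{(j)}_2$ for each $j \in D(G)\setminus \set{i}$, that realizes $G \ominus i_0$ starting from $K_{n-1}$ on vertex set $[n]\setminus \set{i}$. Observe that $H$ is obtained from $G \ominus i_0$ simply by reinstating the single vertex $i_\ell$ and joining it to $A^*_\ell \ce N_G(i_\ell) \setminus \set{i_0}$.

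Next, I would exhibit $H \in \K_{n,d-1}$ by running the same stretching sequence, but starting from $K_n$ (which now contains the extra unstretched vertex $i$). For each $j \in D(G)\setminus \set{i}$ I augment the stretching set by setting $\tilde{B}^{(j)}_\kappa \ce B^{(j)}_\kappa \cup \set{i}$ if $j_\kappa \in A^*_\ell$, and $\tilde{B}^{(j)}_\kappa \ce B^{(j)}_\kappa$ otherwise. Since the non-$i$ entries of $\tilde{B}^{(j)}_\kappa$ agree with $B^{(j)}_\kappa$, the subgraph on non-$i$ vertices produced by the augmented sequence is exactly $G \ominus i_0$. The $i$-entries are placed so that the unstretched vertex $i$ ends up with neighborhood $A^*_\ell$: for $j \in D(G)\setminus \set{i}$, vertex $i$ is adjacent to $j_\kappa$ iff $j_\kappa \in A^*_\ell$ by design; and for unstretched $k \in [n]\setminus D(G)$, the edge $\set{i,k}$ of $K_n$ is undisturbed, while $k \in A^*_\ell$ automatically because $k \in \tilde{\Gamma}_G(i_\ell) = [n]\setminus \set{i}$. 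Relabeling $i$ as $i_\ell$ then produces $H$.

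The main (and essentially only nontrivial) step is to verify that the augmented sequence is a valid chain of 2-stretchings, i.e., that $\tilde{B}^{(j)}_1 \cup \tilde{B}^{(j)}_2 = \Gamma(j)$ at each step. The non-$i$ portion of the cover is inherited from the validity of the original $K_{n-1}$ sequence. For the vertex $i$ itself: since stretches on indices other than $j$ leave the edge $\set{i,j}$ of $K_n$ intact, we have $i \in \Gamma(j)$ at the moment $j$ is stretched; and the hypothesis $\tilde{\Gamma}_G(i_\ell) = [n]\setminus \set{i}$ is precisely what forces at least one of $j_1, j_2$ to belong to $A^*_\ell$, thereby placing $i$ in at least one of $\tilde{B}^{(j)}_1, \tilde{B}^{(j)}_2$. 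This is the one place where the hypothesis $G \notin \tilde{\K}_{n,d}$ is indispensable, and it is what makes the whole construction go through.
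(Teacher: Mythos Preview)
Your proof is correct and takes essentially the same approach as the paper: choose the edge $\set{i_0, i_{3-\ell}}$ and argue that the surviving wing $i_\ell$ plays the role of an unstretched vertex in a graph in $\K_{n,d-1}$. The paper's proof is much terser---it simply asserts that $G - \set{i_0, i_{3-\ell}} \in \K_{n,d-1}$ with $i_\ell$ now taking on the role of an unstretched vertex---whereas your explicit construction of an augmented stretching sequence from $K_n$ (and your observation that the hypothesis $\tilde{\Gamma}_G(i_\ell) = [n]\setminus\set{i}$ is exactly what ensures $i$ lands in at least one of $\tilde{B}^{(j)}_1, \tilde{B}^{(j)}_2$) rigorously fills in the details the paper leaves implicit.
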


\begin{proof}
Given $G \not\in \tilde{\K}_{n,d}$, there exists $i \in D(G)$ and $\ell \in \set{1,2}$ where the wing vertex $i_{\ell}$ satisfies  $\tilde{\Gamma}_{G}(i_{\ell}) = [n] \setminus \set{i}$. Then notice that $G' \ce G - \set{ i_0, i_{3-\ell}} \in \K_{n,d-1}$, with the vertex $i_{\ell} \in V(G')$ now taking on the role of an unstretched vertex. 
\end{proof}

\begin{example}\label{eg306}
Recall that, as shown in Example~\ref{eg303}, $G_3$ from Figure~\ref{figVertexStretch} is in $\K_{6,2} \setminus \tilde{\K}_{6,2}$. Since $\tilde{\Gamma}_{G}(6_2) = [6] \setminus \set{6}$, we can remove the vertices $6_0, 6_1$ to obtain an induced subgraph in $\K_{6,1}$. In fact, observe that $G_3 -  \set{6_0, 6_1} \in \tilde{\K}_{6,1}$.
\end{example}

Thus, applying Lemma~\ref{lem305} iteratively, we see that for every graph $G \in \K_{n,d}$, there exists a partition $\set{C_0, C_1, \ldots, C_k}$ of $V(G)$, where
\begin{itemize}
\item
$C_0$ induces a graph in $\tilde{\K}_{n,d-k}$;
\item
$C_1, \ldots, C_k \in E(G)$, with each $C_j$ consisting of a hub vertex and a wing vertex.
\end{itemize}

We call such a partition $\set{C_0, \ldots, C_k}$ a \emph{stretched-clique decomposition} of $G$, and the subgraph of $G$ induced by $C_0$ a \emph{core stretched clique} of $G$. Notice that the rank inequality of $G$ is exactly the sum of the rank inequality of the core stretched clique induced by $C_0$, as well as the $k$ edge inequalities corresponding to $C_1, \ldots, C_k$. Moreover,  since every hub vertex in $G$ has degree $2$ (and thus does not belong to any clique of size at least $3$), each of the edge inequalities for the edges $C_1 , \ldots, C_k$ is facet-inducing for $\STAB(G)$. We next show that the rank inequality corresponding to $C_0$ is also a facet-inducing inequality for $\STAB(G)$.

\begin{lemma}\label{lem307}
Let $G \in \K_{n,d}$, and let $\set{C_0, \ldots, C_k}$ be a stretched-clique decomposition of $G$. Then
\begin{equation}\label{lem307eq1}
\sum_{v \in C_0} x_v \leq d-k+1
\end{equation}
is a facet-inducing inequality for $\STAB(G)$.
\end{lemma}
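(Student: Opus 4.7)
The plan is to establish validity of $\sum_{v \in C_0} x_v \leq d-k+1$ and then to show that any linear equation $\sum_{v \in V(G)} a_v x_v = b$ valid on the induced face $F \ce \{x \in \STAB(G) : \sum_{v \in C_0} x_v = d-k+1\}$ must be a scalar multiple of the defining equation, i.e., there is a scalar $\lambda$ with $a_v = \lambda$ for all $v \in C_0$, $a_v = 0$ for all $v \notin C_0$, and $b = \lambda(d-k+1)$. Validity is immediate: for any stable set $S$ of $G$, $S \cap C_0$ is a stable set of $G_0 \in \tilde{\K}_{n,d-k}$, and $\alpha(G_0) = d-k+1$ by Lemma~\ref{lem302}(iii). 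The bulk of the argument is to eliminate the hub, wing, and core coordinates of $a$ in turn.

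First I would handle the hubs. For each $j \in [k]$, by Lemma~\ref{lem304} the inequality $\sum_{v \in C_0} x_v \leq d-k+1$ is facet-inducing for $\STAB(G_0)$, so there must exist a stable set $T$ of $G_0$ of size $d-k+1$ with $w_j \notin T$ (otherwise $x_{w_j}=1$ would be implied on the facet, contradicting its dimension). Since the hub $u_j$ has only two neighbors in $G$, namely $v_j \in C_j$ and $w_j \in C_0$, both $T$ and $T \cup \{u_j\}$ are stable sets of $G$ lying in $F$ (their $C_0$-counts agree), differing only in the $u_j$-coordinate. This forces $a_{u_j} = 0$.

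The main obstacle is the wing case, as it is not a priori clear that a wing $v_j$ embeds into a stable set in $F$. Writing $v_j = i_{3-\ell}$ where $w_j = i_\ell$ is the remaining wing of slot $i$ in $G_0$, I propose the explicit construction
\[
S_j^* \ce \{v_j, w_j\} \cup \{j'_0 : j' \in D(G) \setminus \{i\}\}.
\]
Two structural facts make $S_j^*$ a stable set of $G$: (i) the wings $v_j$ and $w_j$ of slot $i$ are non-adjacent, since the stretching operation joins each wing only to the hub $i_0$ and to the original neighbors of $i$, and neither wing is such a neighbor; (ii) every hub $j'_0$ has degree exactly $2$ with neighbors being its own wings $j'_1, j'_2$, so the hubs are pairwise non-adjacent and each is non-adjacent to $v_j, w_j$. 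Hence $|S_j^*| = 2 + (d-1) = d+1 = \alpha(G)$ by Lemma~\ref{lem302}(iii). Combining with the bound
\[
|S_j^*| = |S_j^* \cap C_0| + \sum_{j' \in [k]} |S_j^* \cap C_{j'}| \leq (d-k+1) + k = d+1,
\]
equality must hold everywhere, forcing $|S_j^* \cap C_0| = d-k+1$, i.e., $S_j^* \in F$. Comparing $S_j^*$ and $S_j^* \setminus \{v_j\}$, which both lie in $F$ and differ only in the $v_j$-coordinate, yields $a_{v_j} = 0$.

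Finally, with $a_{u_j} = a_{v_j} = 0$ for every $j \in [k]$, the equation reduces to $\sum_{v \in C_0} a_v x_v = b$ on $F$. Every stable set of $G_0$ of size $d-k+1$, viewed as a $\{0,1\}$-vector in $\mR^{V(G)}$ supported on $C_0$, lies in $F$, so this reduced equation holds on the entire facet of $\STAB(G_0)$ given by Lemma~\ref{lem304}. Its affine hull is $\{x \in \mR^{C_0} : \sum_{v \in C_0} x_v = d-k+1\}$, which forces $a_v = \lambda$ for all $v \in C_0$ and $b = \lambda(d-k+1)$, completing the facet-inducing argument.
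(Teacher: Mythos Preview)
Your proof is correct and follows a different route from the paper's. The paper establishes the facet by exhibiting $|V(G)|$ affinely independent incidence vectors on the face: it starts from a collection $\S$ of $|V(G')|$ maximum stable sets of the core $G'=G[C_0]$ (coming from Lemma~\ref{lem304}) and, for each $v\in V(G)\setminus C_0$, builds a stable set $T_v$ of $G$ on the face with $v\in T_v$. For a wing $v=i_1$ this requires choosing some $S\in\S$ containing $i_2$, then swapping certain wing vertices of $S$ for the corresponding hubs to clear the neighbourhood of $i_1$, and finally adjoining $i_1$. You instead use the supporting-hyperplane characterisation, and your treatment of the wing vertex is more direct: the explicit set $S_j^*=\{v_j,w_j\}\cup\{j'_0:j'\in D(G)\setminus\{i\}\}$ is immediately seen to be stable (hubs have degree~$2$ and see only their own wings; the two wings of a slot are non-adjacent), has size $d+1=\alpha(G)$, and the counting argument $|S_j^*|\le (d-k+1)+k$ forces it onto the face. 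This avoids the swapping step entirely. Both approaches lean on Lemma~\ref{lem304} for the core, but your construction is arguably cleaner, while the paper's method has the mild advantage of producing an explicit affinely independent family all at once.
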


\begin{proof}
Let $G'$ be the subgraph of $G$ induced by $C_0$. Since $G'$ is an induced subgraph of $G$,~\eqref{lem307eq1} is valid for $\STAB(G)$. Next, we show that there are indeed $|V(G)|$ affinely independent incidence vectors of stable sets in $G$ that satisfy~\eqref{lem307eq1} with equality. First, if $k=0$, then $G' = G$ and the claim follows from Lemma~\ref{lem304}. Next, suppose $k \geq 1$, and so $G'$ is a proper subgraph of $G$. Then there is a collection of stable sets $\S$ of $G'$  where $|\S| = |V(G')|$ and the vectors $\set{ \chi_S : S \in \S}$ are affinely independent and all satisfy~\eqref{lem307eq1} with equality. Moreover, since the facet-inducing inequality~\eqref{lem307eq1} has full support (in the perspective of $G'$), we know that for every $v \in V(G')$, $v \in S$ for at least one $S \in \S$, and $v \not\in S$ for at least one $S \in \S$.

Now consider a vertex $v \in V(G) \setminus V(G')$. If $v = i_0$ for some $i \in D(G)$ (i.e., $v$ is a hub vertex), then by the construction in Lemma~\ref{lem305} we know that exactly one of $i_1, i_2$ is in $V(G')$. Suppose without loss of generality that it is $i_2$ (and so $i_1 \in V(G) \setminus V(G')$). Choose $S \in \S$ where $i_2 \not\in S$, and define $T_v \ce  S \cup \set{v}$. Then $T_v$ is a stable set in $G$ and $\chi_{T_v}$ satisfies~\eqref{lem307eq1} with equality.

Next, suppose $v \in V(G) \setminus V(G')$ is a wing vertex, and so without loss of generality let $v = i_1$ for some $i \in D(G)$. By the construction in Lemma~\ref{lem305} again, we know that $i_2 \in V(G')$, and $\tilde{\Gamma}_{G}(i_2) = [n] \setminus \set{i}$. Then choose $S \in \S$ such that $i_2 \in S$. 

Next, notice that for every $j \in D(G) \setminus \set{i}$, $S$ cannot contain both wing vertices $j_1$ and $j_2$ since $i_2 \in S$ and $i_2$ is adjacent to at least one of them (due to $\tilde{\Gamma}_{G}(i_2) = [n] \setminus \set{i}$). Now if $j_1 \in S$, then $\set{i_2, j_1}$ is not an edge, and so $i \not\in \tilde{\Gamma}_{G'}(j_1)$, and so $j_2 \in V(G')$. Now both $j_1, j_2 \in V(G')$ implies that $j_0 \in V(G')$. Thus, if we define $S'$ to be the set obtained from $S$ by replacing every wing vertex $j_{\ell} \in S$ that is not adjacent to $i_2$ by the hub vertex $j_0$, then $S'$ must be a stable set in $V(G')$. Also, $|S'| = |S|$, and so $\chi_{S'}$ must also satisfy~\eqref{lem307eq1} with equality. Finally, by the construction of $S'$, it must not contain any vertex that is adjacent to $v= i_1$. Hence, if we define $T_v \ce S' \cup \set{v}$ in this case, then we obtain a stable set in $G$ whose incidence vector satisfies~\eqref{lem307eq1} with equality.

Applying this process for all $v \in V(G) \setminus V(G')$, we see that
\begin{equation}\label{lemCoreStretchedCliqueFaceteq1}
\S \cup \set{T_v : v \in V(G) \setminus V(G')}
\end{equation}
gives a set of $|V(G)|$ stable sets whose incidence vectors satisfy~\eqref{lem307eq1} with equality. Also, observe that each $v \in V(G) \setminus V(G')$ appears in $T_v$ but not any other stable set~\eqref{lemCoreStretchedCliqueFaceteq1}, and thus we see that their incidence vectors must also be affinely independent. This finishes the proof.
\end{proof}


Next, given $G \in \K_{n,d}$, we define the \emph{deficiency} of $G$ to be the minimum $k$ for which there exists a stretched-clique decomposition of $G$ with $k$ edges. For example, our discussion in Example~\ref{eg306} shows that  $G_3$ from Figure~\ref{figVertexStretch} has deficiency $1$. In general, graphs in $\tilde{\K}_{n,d}$ have deficiency $0$, and Lemma~\ref{lem305} shows that every graph in $\K_{n,d}$ has deficiency at most $d$.

Given a graph $G$, let $\omega(G)$ denote the size of the largest clique in $G$ (the \emph{clique number} of $G$). The next result relates the deficiency and clique number of stretched cliques.

\begin{lemma}\label{lem308}
Let $G \in \K_{n,d}$. Then the deficiency of $G$ is at most $\max\set{0, \omega(G) - n +d}$.
\end{lemma}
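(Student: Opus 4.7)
The plan is to iteratively apply Lemma~\ref{lem305} to build a stretched-clique decomposition of $G$, and then bound the number of edge-pieces it produces via a simple clique argument. If $G \in \tilde{\K}_{n,d}$, its deficiency is $0$ and the bound is trivial; otherwise, Lemma~\ref{lem305} lets me pick a hub-wing edge $\{u_1, v_1\}$ with $G_1 \ce G - \{u_1, v_1\} \in \K_{n, d-1}$. Iterating as long as the current graph lies outside $\tilde{\K}_{n, d-j}$, I get a chain $G = G_0, G_1, \ldots, G_k$ terminating with $G_k \in \tilde{\K}_{n, d-k}$ (this must terminate by $k = d$ at the latest, since $\K_{n,0} = \tilde{\K}_{n,0}$). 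Then $\{V(G_k), \{u_1, v_1\}, \ldots, \{u_k, v_k\}\}$ is a stretched-clique decomposition of $G$ with $k$ edge pieces, so the deficiency of $G$ is at most $k$.

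It remains to show $k \leq \max\{0, \omega(G) - n + d\}$. The key observation is that any $H \in \K_{n, d'}$ contains a clique of size $n - d'$: its $n - d'$ unstretched vertices are precisely the vertices inherited from $K_n$ that were never stretched, and the edges among them from $K_n$ survive in $H$ because stretching a vertex only modifies edges incident to that vertex. Applying this with $H = G_k \in \tilde{\K}_{n, d-k}$ gives $\omega(G_k) \geq n - (d-k) = n - d + k$, and since $G_k$ is an induced subgraph of $G$, $\omega(G) \geq \omega(G_k) \geq n - d + k$. Combined with $k \geq 0$, this yields $k \leq \max\{0, \omega(G) - n + d\}$, as required.

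The argument is about as short as it looks; the only step requiring any thought is recognizing that the ``reduction count'' $k$ coming from Lemma~\ref{lem305} is controlled by the clique number of the terminal graph $G_k$, which in turn has a clean lower bound from the $K_{n-d+k}$ on its unstretched vertices. There is no substantial obstacle beyond carefully tracking the parameter pair $(n, d-j)$ at each iteration and verifying that the concatenation of $V(G_k)$ with the extracted edges does satisfy the definition of a stretched-clique decomposition.
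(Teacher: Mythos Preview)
Your proof is correct and follows essentially the same route as the paper: both iterate Lemma~\ref{lem305} and control the number of steps via the clique of unstretched vertices. The paper packages this as an induction on $\omega(G)-n+d$, handling the base case $\omega(G)\le n-d$ by arguing that a wing vertex violating the $\tilde{\K}$ condition would, together with the $n-d$ unstretched vertices, create an $(n-d+1)$-clique; you instead unroll the iteration and read off the clique $K_{n-d+k}$ on the unstretched vertices of the terminal graph $G_k$. The two arguments are equivalent, with yours slightly more streamlined.
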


\begin{proof}
First, suppose $\omega(G) \leq n-d$. We show that $G \in \tilde{\K}_{n,d}$ (and thus has deficiency $0$) in this case.   Consider any wing vertex $i_{\ell} \in V(G)$ where $i\in D(G)$ and $\ell \in \set{1,2}$. If $i_{\ell}$ were adjacent to every unstretched vertex in $[n] \setminus D(G)$, then $\set{i_{\ell}} \cup ([n] \setminus D(G))$ would induce a clique of size $n-d+1$ in $G$, a contradiction. Thus, there exists $j \in [n] \setminus D(G)$ where $\set{i,j}$ is not an edge, and thus, $j \not\in \tilde{\Gamma}_{G}(i_{\ell})$. This shows that $\tilde{\Gamma}_{G}(i_{\ell}) \subset [n] \setminus \set{i}$ for every wing vertex, and thus $G \in \tilde{\K}_{n,d}$.

Next, suppose $\omega(G) \geq n-d$. We prove our claim by induction on $\omega(G) - n + d$. The case $\omega(G) - n + d = 0$ has already been verified above. Next, given $G \in \K_{n,d}$, either $G \in \tilde{\K}_{n,d}$ (in which case $G$ has deficiency $0$ and the claim follows), or by Lemma~\ref{lem305} there exists $G' \in \K_{n,d-1}$ where $G'$ is an induced subgraph of $G$ and that the deficiency of $G'$ is that of $G$ minus one. By the inductive hypothesis we know that $G'$ has deficiency at most $\omega(G') - n + d -1$. Since $\omega(G') \leq \omega(G)$, it follows that $G$ has deficiency at most $\omega(G) - n + d$.
\end{proof}

The structural properties established in this section show that stretched cliques have a useful combinatorial decomposition, but they do not yet yield rank lower bounds. In the next section we combine this structure with a general $\LS_+$ argument. 

\section{Structural results for $\LS_+$-relaxations}\label{sec03}

In this section we prove the lower-bound mechanism that drives the paper. Our eventual goal is to show that, for many stretched cliques, the rank inequality requires a prescribed number of rounds of the $\LS_+$ to derive. This results in a lower bound on the $\LS_+$-rank of the graph, and in the extremal cases it will show that the upper bound $r_+(G)\le \lfloor \frac{|V(G)|}{3} \rfloor$ is attained.

We begin with two lemmas for general $\LS_+$-relaxations. These results are not specific to stretched cliques; rather, they isolate the convex-geometric mechanism behind our argument. Their purpose is to justify the witness vectors that will later be specialized to graphs in $\K_{n,d}$.

The main technical difficulty is that the natural inductive operations on stretched cliques --- deleting an unstretched vertex or destroying a hub vertex --- do not preserve the subclass $\tilde{\K}_{n,d}$. For that reason, after establishing the witness construction on $\tilde{\K}_{n,d}$, we introduce another subclass of stretched cliques $\hat{\K}_{n,d}$. This additional condition is what allows the inductive argument to close, and it is also what leads to many extremal examples rather than only a single explicit family.

We first isolate the general convex-geometric step behind the argument. The next two lemmas explain how a suitably perturbed vector can be used to show that a facet-defining inequality is not yet valid after a given number of rounds of $\LS_+$.
	
\begin{lemma}\label{lem401}
Let $P \subseteq [0,1]^n$ be a closed convex set, and let $k$ be a non-negative integer. Suppose that $P_I$ is full-dimensional and let $F \ce \set{ x \in P_I :  c^{\top}x = c_0}$
be a facet of $P_I$ where the facet-defining inequality $c^{\top}x \leq c_0$ is not valid for $\LS_+^{k}(P)$. Then, the relative interior of $F$ is a strict subset of the interior of $\LS_+^k(P)$.
\end{lemma}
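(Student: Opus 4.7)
Let $M \ce \LS_+^k(P)$, so that $F \subseteq P_I \subseteq M$. The plan is to show that every $x \in \relint(F)$ lies in $\inte(M)$ by expressing $x$ as a strict convex combination of $n+2$ points of $M$ whose affine hull is all of $\mR^n$; the strictness of the claimed inclusion will then follow essentially for free, since $\inte(M)$ is full-dimensional and cannot be confined to the facet hyperplane of $F$.

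First, I would produce the two ``transverse'' witnesses needed to leave $F$ in both directions along $c$. Since $c^{\top}x \leq c_0$ fails to be valid on $M$, there is a point $y \in M$ with $c^{\top}y > c_0$. Since $P_I$ is full-dimensional and $c^{\top}x \leq c_0$ is valid on $P_I$, there is also a point $z \in P_I \subseteq M$ with $c^{\top}z < c_0$ (e.g., any vertex of $P_I$ not lying in $F$). In parallel, because $F$ is a facet of $P_I$ and hence $(n-1)$-dimensional, I can fix affinely independent points $v_1,\dots,v_n \in F$ whose convex hull contains $x$ in its relative interior, writing $x = \sum_{i=1}^n \lambda_i v_i$ with every $\lambda_i > 0$.

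Next, I would perturb by a small mass on $y$ and $z$. For small $\ep > 0$, pick positive $\mu_y = \alpha \ep$ and $\mu_z = \beta \ep$ with $\alpha(c^{\top}y - c_0) = \beta(c_0 - c^{\top}z)$, so that $\mu_y y + \mu_z z$ has $c$-coordinate $(\mu_y + \mu_z)c_0$. Then the point $w \ce (1 - \mu_y - \mu_z)^{-1}(x - \mu_y y - \mu_z z)$ lies on the hyperplane $\{c^{\top}\cdot = c_0\}$ and converges to $x$ as $\ep \to 0$, so for all sufficiently small $\ep$ it lies in $\relint(\conv(v_1,\dots,v_n))$ and can be written as $w = \sum \mu_i' v_i$ with positive $\mu_i'$ summing to $1$. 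Setting $\mu_i \ce (1 - \mu_y - \mu_z)\mu_i'$ then expresses $x$ as a \emph{strict} convex combination of the $n+2$ points $v_1,\dots,v_n,y,z \in M$. Because $v_1,\dots,v_n$ affinely span the hyperplane $\{c^{\top}\cdot = c_0\}$ and $y$ lies off it, the collection $\{v_1,\dots,v_n,y,z\}$ affinely spans all of $\mR^n$. A standard convex-geometry argument --- any vector in $\mR^n$ can be written in terms of the $v_i - v_1$ and $y - v_1$, and a sufficiently small multiple of it can be absorbed into the positive coefficients without violating positivity or the simplex constraint --- then gives $x \in \inte(M)$.

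Finally, once $\inte(M)$ is known to be nonempty (it contains the nonempty set $\relint(F)$), it is an open, full-dimensional subset of $\mR^n$, so it cannot be contained in the hyperplane $\{c^{\top}\cdot = c_0\}$ that holds $F$; hence $\relint(F) \subsetneq \inte(M)$, with witnesses given, say, by perturbing any $x \in \relint(F)$ slightly toward $y$ within the convex combination above. The step I expect to require the most care is the perturbation bookkeeping: ensuring that $\mu_y, \mu_z$ are genuinely positive, that $w$ truly remains in $\relint(\conv(v_1,\dots,v_n))$ for small $\ep$, and that the passage from ``strict convex combination with full affine span'' to topological interior is properly justified.
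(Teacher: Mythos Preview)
Your proposal is correct and follows the same core strategy as the paper: produce a point of $\LS_+^k(P)$ strictly beyond the facet hyperplane and combine it with the full-dimensional $P_I$ to push $\relint(F)$ into the interior. The paper compresses this to a single line---taking a maximizer $\bar{x}$ of $c^{\top}x$ over $\LS_+^k(P)$, setting $Q \ce \conv(P_I \cup \{\bar{x}\})$, and invoking $\relint(F) \subset \inte(Q) \subseteq \inte(\LS_+^k(P))$---whereas your simplex-and-perturbation construction is effectively a hands-on proof of that same containment.
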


\begin{proof}
Suppose the assumptions of the lemma hold. Let $\bar{x}$ be an optimal solution of
\[
\max\left\{c^{\top}x \, : \, x \in \LS_+^k(P)\right\}.
\] 
This maximum is attained because, by assumptions and the properties of $\LS_+$, $\LS_+^k(P)$ is a non-empty compact set and the objective function is continuous. Since $c^{\top}x \leq c_0$ is not a valid inequality for $\LS_+^k(P)$, and it is a facet-inducing inequality for $P_I$, we have $\bar{x} \notin P_I$. Let $Q \ce \conv\left(P_I \cup\{\bar{x}\}\right)$. Note that since $P_I$ is full-dimensional, $\relint(F) \subset \inte(Q) \subseteq \inte(\LS_+^{k}(P))$, where the last inclusion follows from the facts that $\bar{x} \in \LS_+^k(P)$, $P_I \subset \LS_+^k(P)$, and $\LS_+^k(P)$ is a convex set.
\end{proof}

Next, given a set $P \subseteq [0,1]^n$ and a linear equation $c^{\top}x = c_0$, we define the vector
\[
u_{c^{\top}x=c_0}(P) \ce \sum_{\substack{z \in \set{0,1}^n \cap P,\\ c^{\top}z = c_0}} \begin{bmatrix} 1 \\ z \end{bmatrix},
\]
when the intersection is not empty.
Observe that, for every choice of equation $c^{\top}x = c_0$, $u_{c^{\top}x=c_0}(P) \in \cone(P_I)$. We also extend the notation to allow multiple equalities --- e.g., $u_{c^{\top}x = c_0, c'^{\top}x = c_0'}(P) = \displaystyle\sum_{\substack{z \in \set{0,1}^n \cap P, \\ c^{\top}z = c_0, c'^{\top}z = c_0'}} \begin{bmatrix} 1 \\ z \end{bmatrix}$
(again, when the intersection is non-empty). Also, given a set $P \subseteq \mR^n$, we say that $P$ is \emph{lower-comprehensive} if for every $x \in P$ and for every $y$ where $x \geq y \geq 0$, it must be the case that $y \in P$ as well. Observe that $\FRAC(G)$ is lower-comprehensive for every graph $G$. Since $\LS_+$ preserves lower-comprehensiveness, it follows that $\LS_+^k(G)$ is lower-comprehensive for every non-negative integer $k$ (see, for instance, \cite{GoemansT01}).

The following is a key lemma to our main result, as it will be used in the inductive step of our argument that the vertex-stretching operation does increase the $\LS_+$-rank of a graph under some circumstances.

\begin{lemma}\label{lem402}
Let $P \subseteq [0,1]^n$ be a lower-comprehensive closed convex set and let $k$ be a non-negative integer. Suppose $P_I$ is full-dimensional and let $c^{\top}x \leq c_0$ be a facet-inducing inequality for $P_I$. If there exists a set of indices $D \subseteq [n]$ where
\begin{itemize}
\item
$c_0 > c^{\top} \chi_D  > 0$;
\item
for every $i \in D$, there exists $\epsilon > 0$ where $u_{x_i=1,c^{\top}x=c_0}(P) - \epsilon \begin{bmatrix}1 \\ \chi_D \end{bmatrix} \in \cone(\LS_+^k(P))$;
\item
for every $i \in [n] \setminus D$, there exists $\epsilon > 0$ where $u_{x_i=0,c^{\top}x=c_0}(P) - \epsilon \begin{bmatrix}1 \\ \chi_D \end{bmatrix} \in \cone(\LS_+^k(P))$.
\end{itemize}
Then, $u_{c^{\top}x=c_0}(P) - \epsilon \begin{bmatrix}1 \\ \chi_D \end{bmatrix} \in \cone(\LS_+^{k+1}(P))$ for some $\epsilon > 0$, and the $\LS_+$-rank of $c^{\top}x \leq c_0$ is at least $k+2$.
\end{lemma}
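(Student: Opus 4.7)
The plan is to construct a matrix $Y \in \widehat{\LS}_+(\LS_+^k(P))$ whose first column is a positive scalar multiple of $u_{c^\top x = c_0}(P) - \epsilon' \begin{bmatrix} 1 \\ \chi_D \end{bmatrix}$ for some $\epsilon' > 0$. This will give $u_{c^\top x = c_0}(P) - \epsilon' \begin{bmatrix} 1 \\ \chi_D \end{bmatrix} \in \cone(\LS_+^{k+1}(P))$, and the $\LS_+$-rank lower bound follows immediately: the corresponding point $\tilde x := (u_{1:n} - \epsilon' \chi_D)/(|S| - \epsilon')$ lies in $\LS_+^{k+1}(P)$ and satisfies $c^\top \tilde x = (|S| c_0 - \epsilon' c^\top \chi_D)/(|S| - \epsilon') > c_0$, using $c^\top \chi_D < c_0$ from the first assumption. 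Hence $c^\top x \leq c_0$ is not valid on $\LS_+^{k+1}(P)$, so its $\LS_+$-rank is at least $k + 2$.

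My natural candidate is
\[
Y_0 := \sum_{z \in S} v_z v_z^\top - \epsilon \begin{bmatrix} 1 \\ \chi_D \end{bmatrix} \begin{bmatrix} 1 \\ \chi_D \end{bmatrix}^\top,
\]
where $S := \{z \in P \cap \{0,1\}^n : c^\top z = c_0\}$ and $v_z := \begin{bmatrix} 1 \\ z \end{bmatrix}$. A direct bookkeeping shows that $Y_0 e_0 = \diag(Y_0) = u_{c^\top x = c_0}(P) - \epsilon \begin{bmatrix} 1 \\ \chi_D \end{bmatrix}$. For each $i \in D$, the column $Y_0 e_i$ equals $u_{x_i = 1, c^\top x = c_0}(P) - \epsilon \begin{bmatrix} 1 \\ \chi_D \end{bmatrix}$, which lies in $\cone(\LS_+^k(P))$ by the second hypothesis, while $Y_0(e_0 - e_i) = u_{x_i = 0, c^\top x = c_0}(P) \in \cone(P_I) \subseteq \cone(\LS_+^k(P))$; the case $i \not\in D$ is the mirror image using the third hypothesis. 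So every column constraint of $\widehat{\LS}_+(\LS_+^k(P))$ is already satisfied by $Y_0$.

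The principal obstacle is positive semidefiniteness. The kernel vector $k := \begin{bmatrix} -c_0 \\ c \end{bmatrix}$ of $\sum_{z \in S} v_z v_z^\top$ (note $k^\top v_z = -c_0 + c^\top z = 0$ for every $z \in S$) satisfies $k^\top Y_0 k = -\epsilon (c_0 - c^\top \chi_D)^2 < 0$, so $Y_0$ is not PSD and must be modified. My plan is to exploit the lower-comprehensiveness of $\LS_+^k(P)$ to replace each child vector $y^i := (Y_0 e_i)/x_i$ by a suitable convex combination with a pointwise-smaller vector still in $\LS_+^k(P)$, then reassemble the matrix. A careful quantitative balancing should ensure that this modification (i) preserves the diagonal condition $Ye_0 = \diag(Y)$, (ii) leaves every column in $\cone(\LS_+^k(P))$, (iii) keeps the first column a positive multiple of $u_{c^\top x = c_0}(P) - \epsilon' \begin{bmatrix} 1 \\ \chi_D \end{bmatrix}$ for some $\epsilon' \in (0, \epsilon)$, and (iv) cancels the negative contribution along $k$ to yield $Y \succeq 0$. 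Executing this balancing, with an explicit quantitative choice of perturbation in terms of $\epsilon$, $|S|$, $c_0$, and $c^\top \chi_D$, is the main technical step and the hardest part I expect to encounter.
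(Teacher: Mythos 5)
Your setup coincides with the paper's: you sum the rank-one matrices $v_zv_z^{\top}$ over the $0/1$ points on the facet, check that subtracting $\epsilon\begin{bmatrix}1\\ \chi_D\end{bmatrix}\begin{bmatrix}1\\ \chi_D\end{bmatrix}^{\top}$ would make every column $Y_0e_i$ and $Y_0(e_0-e_i)$ land in $\cone(\LS_+^k(P))$ via the three hypotheses, and correctly identify that this rank-one perturbation gives $c'^{\top}Y_0c'=-\epsilon\,(c_0-c^{\top}\chi_D)^2<0$ along the null direction $c'=\begin{bmatrix}-c_0\\ c\end{bmatrix}$, so $Y_0\not\succeq 0$. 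But the proof stops exactly where the lemma's content begins: the ``careful quantitative balancing'' you defer is the entire technical heart of the argument, and the route you sketch for it (replacing each child vector by a convex combination with a pointwise-smaller vector) is neither carried out nor shown to simultaneously preserve symmetry, the diagonal condition, the column memberships, and positive semidefiniteness. As written this is a genuine gap, not a complete proof.

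For the record, the paper's fix is different and quite specific. The perturbation $Y_1$ it subtracts agrees with $-\epsilon\begin{bmatrix}1\\ \chi_D\end{bmatrix}\begin{bmatrix}1\\ \chi_D\end{bmatrix}^{\top}$ in every entry except the corner: the $(0,0)$ entry is set to $-\frac{c^{\top}\chi_D}{c_0}\epsilon(1-\epsilon)$ instead of $-\epsilon$. Making the corner less negative flips the sign of the quadratic form along the null direction, $c'^{\top}Y_1c'=\epsilon\, c^{\top}\chi_D\bigl((1+\epsilon)c_0-c^{\top}\chi_D\bigr)>0$, which is where the hypothesis $c_0>c^{\top}\chi_D>0$ is used to get $Y=Y_0+Y_1\succeq 0$ for small $\epsilon$. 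The price is twofold, and both costs must be paid explicitly. First, the columns $Y(e_0-e_i)$ pick up an extra term $\epsilon\bigl(1-(1-\epsilon)\tfrac{c^{\top}\chi_D}{c_0}\bigr)e_0$, which is absorbed because $0\in P_I$ --- this is precisely where lower-comprehensiveness enters, a hypothesis your sketch never actually uses in a verifiable way. Second, $Ye_0$ is no longer a positive multiple of $u_{c^{\top}x=c_0}(P)-\epsilon\begin{bmatrix}1\\ \chi_D\end{bmatrix}$, so your ``follows immediately'' paragraph collapses: one only gets $c'^{\top}(Ye_0)<0$, i.e.\ non-validity of the facet inequality for $\LS_+^{k+1}(P)$. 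The stated membership of $u_{c^{\top}x=c_0}(P)-\epsilon\begin{bmatrix}1\\ \chi_D\end{bmatrix}$ then requires the separate interior-point argument of Lemma~\ref{lem401}: once the facet inequality is violated, the barycenter of the facet lies in the interior of $\LS_+^{k+1}(P)$, so a small step in the direction $-\begin{bmatrix}1\\ \chi_D\end{bmatrix}$ stays in the cone. Both of these ingredients are absent from your proposal.
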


\begin{proof}
Define $\T \ce \set{ x \in \set{0,1}^n \cap P : c^{\top}x = c_0}$ (i.e., $\T$ consists of the integral points in $P$ which lie on the facet of $P_I$ defined by the inequality $c^{\top}x \leq c_0$), and let
\[
Y_0 \ce \sum_{ T \in \T } \begin{bmatrix} 1 \\ \chi_T \end{bmatrix} \begin{bmatrix} 1 \\ \chi_T \end{bmatrix}^{\top}.
\]
Then observe that, for every $i \in [n]$, $Y_0e_i = u_{x_i=1, c^{\top}x = c_0}(P)$ and $Y_0(e_0-e_i) = u_{x_i=0, c^{\top}x = c_0}(P)$, and thus $Y_0e_i, Y_0(e_0-e_i) \in \cone(P_I)$ for every $i \in [n]$. Also, since $Y_0$ is defined to be a sum of symmetric positive semidefinite matrices, we have $Y_0 \succeq 0$.


Next, we claim that the null space of $Y_0$ has dimension $1$ and is spanned by the vector $c' \ce \begin{bmatrix} -c_0 \\ c \end{bmatrix}$.  Notice that if vector $x$ satisfies $Y_0x = 0$, then $x^{\top} Y_0 x = 0$, and so $x^{\top} \begin{bmatrix} 1 \\ \chi_T \end{bmatrix} = 0$ for every $T \in \T$. Since $c^{\top}x \leq c_0$ is a facet-inducing inequality for $P_I$, this implies that $x$ must be a multiple of $c'$.

Next, we define the matrix $Y_1 \in \mR^{(n+1) \times (n+1)}$ where
\[
Y_1e_i \ce \begin{cases}
\begin{bmatrix} \frac{-c^{\top}\chi_D}{c_0} \epsilon(1 - \epsilon) \\ -\epsilon \chi_D \end{bmatrix} & \tn{if $i = 0$;}\\
\begin{bmatrix} -\epsilon \\ -\epsilon \chi_D \end{bmatrix} & \tn{if $i \in D$;}\\
0 & \tn{otherwise.}
\end{cases}
\]
Observe that $Y_1 = Y_1^{\top}$, and $\diag(Y_1) = Y_1e_0$. 

We show that  there exists $\epsilon > 0$ where $Y \ce Y_0 +  Y_1 \in \widehat{\LS}_+^{k+1}(P)$. First, it is apparent that $Y$ is symmetric and satisfies $Ye_0 = \diag(Y)$ (as both $Y_0$ and $Y_1$ satisfy these properties). Now observe that 
\begin{align*}
c'^{\top}Yc' = c'^{\top}Y_1c' 
&= \begin{bmatrix} -c_0 & c^{\top} \end{bmatrix}  \begin{bmatrix}  \frac{-c^{\top}\chi_D}{c_0} \epsilon(1 - \epsilon) & -\epsilon \chi_D^{\top} \\ -\epsilon \chi_D & -\epsilon \chi_D\chi_D^{\top} \end{bmatrix} \begin{bmatrix} -c_0 \\ c \end{bmatrix}\\
 &= \epsilon \left( (1+\epsilon)  c^{\top} \chi_D c_0 - \left( c^{\top}\chi_D \right)^2 \right)\\
 & = \epsilon c^{\top}\chi_D \left( (1+\epsilon)c_0 - c^{\top}\chi_D \right),
\end{align*}
which is positive for all $\epsilon > 0$ (due to the assumption that $c_0 > c^{\top}\chi_D > 0$). Thus, $Y \succeq 0$ for all sufficiently small $\epsilon > 0$.

Next, we show that $Ye_i \in \cone(\LS_+^k(P))$ for every $i \in [n]$. If $i \in [n] \setminus D$, then $Y_1e_i = 0$, and so 
\[
Ye_i = Y_0e_i = u_{x_i=1,c^{\top}x=c_0}(P) \in \cone(P_I) \subseteq \cone(\LS_+^k(P)).
\]
For $i \in D$, observe that $Y_1e_i = -\epsilon \begin{bmatrix} 1 \\ \chi_D \end{bmatrix}$, and so $Ye_i \in \cone(\LS_+^k(P))$ follows from the second assumption.

Next, we show that $Y(e_0-e_i) \in \cone(\LS_+^k(P))$ for every $i \in [n]$. Observe that
\[
Y(e_0-e_i) = \begin{cases}
u_{x_i=0, c^{\top}x = c_0}(P) + \epsilon \begin{bmatrix} 1 - (1-\epsilon)\frac{c^{\top}\chi_D}{c_0}  \\ 0 \end{bmatrix} &
\tn{if $i \in D$;}\\
u_{x_i=0, c^{\top}x = c_0}(P) - \epsilon \begin{bmatrix}  1 \\ \chi_D \end{bmatrix}  + \epsilon \begin{bmatrix} 1 - (1-\epsilon)\frac{c^{\top}\chi_D}{c_0}  \\ 0 \end{bmatrix} &
\tn{if $i \in [n] \setminus D$.}\\
\end{cases}
\]
Notice that $u_{x_i=0, c^{\top}x = c_0}(P)  \in \cone(P_I) \subseteq   \cone(\LS_+^k(P))$ for every $i \in D$, and $u_{x_i=0, c^{\top}x = c_0}(P) - \epsilon \begin{bmatrix}  1 \\ \chi_D \end{bmatrix} \in \cone(\LS_+^k(P))$ for every $i \in [n] \setminus D$ by the third assumption. Also, we have $0 \leq (1-\epsilon)\frac{c^{\top}\chi_D}{c_0} < 1$ for all small $\epsilon > 0$. Since $P$ is assumed to be lower-comprehensive, $0 \in P_I$, and so $ \begin{bmatrix} 1 - (1-\epsilon)\frac{c^{\top}\chi_D}{c_0}  \\ 0 \end{bmatrix}  \in \cone(P_I) \subseteq \cone(\LS_+^k(P))$. Since $\cone(\LS_+^k(P))$ is closed under vector addition, we have $Y(e_0 - e_i) \in \cone(\LS_+^k(P))$ for every $i \in [n]$.

Thus, it follows that $Ye_0 \in \cone(\LS_+^{k+1}(P))$ for some $\epsilon > 0$. Now
\[
c'^{\top}(Ye_0) = c'^{\top} (Y_1e_0) = \begin{bmatrix} -c_0& c^{\top} \end{bmatrix}  \begin{bmatrix}  \frac{-c^{\top}\chi_D}{c_0} \epsilon(1 - \epsilon) \\ -\epsilon \chi_D
\end{bmatrix} = -\epsilon^2 c^{\top}\chi_D < 0
\]
for every $\epsilon > 0$. Thus, $Ye_0 \not\in \cone(P_I)$, which implies that the facet-inducing inequality $c^{\top}x \leq c_0$ is not valid for $\LS_+^{k+1}(P)$. 

Finally, given $\epsilon \geq 0$, define $\bar{x} \in \mR^n$ where $\bar{x}_i \ce \frac{Y_0[i,0]}{Y_0[0,0]}$ for every $i \in [n]$. (Note that it is necessary that $Y_0[0,0] > 0$, or otherwise no integral point in $P$ satisfies $c^{\top}x = c_0$. Then the second and/or third assumption would imply that there exists a point in $\cone(P)$ with negative entries, contradicting $P \subseteq [0,1]^n$.) By the construction of $Y_0$ and that $c^{\top}x \leq c_0$ is a facet-inducing inequality for $P_I$, $\bar{x}$ is in the relative interior of $\set{ x \in P_I : c^{\top}x = c_0}$. Thus, Lemma~\ref{lem401} implies that $\bar{x}$ is in the interior of $\LS_+^{k+1}(P)$, which in turn implies that $Y_0e_0$ is in the interior of $\cone(\LS_+^{k+1}(P))$. Thus, it follows that $Y_0e_0 - \epsilon \begin{bmatrix} 1 \\ \chi_D \end{bmatrix} \in \cone(\LS_+^{k+1}(P))$ for some $\epsilon > 0$, and our claim follows.
\end{proof}

We now specialize the preceding discussion to stretched cliques. The vector $v(G,\epsilon)$ defined below is the concrete witness that will be used throughout the remainder of the paper. Given $G \in \K_{n,d}$ and $\epsilon > 0$, let $D_0 \ce \set{i_0 : i \in D(G)}$ (i.e., $D_0$ is the set of hub vertices in $G$), and  define the vector
\[
v(G,\epsilon) \ce u_{\bar{e}^{\top}x = d+1}(\FRAC(G)) - \epsilon \begin{bmatrix} 1 \\ \chi_{D_0} \end{bmatrix}.
\]
\begin{example}\label{egvA5}
We illustrate the definition of $v(G,\epsilon)$ using the graph $\A_5$ from Figure~\ref{figAk}. Recall that $\A_5 \in \K_{5,2}$, so $d=2$, and the set of hub vertices is $D_0=\{4_0,5_0\}$.

Next, the vector $u_{\bar e^\top x=3}(\FRAC(\A_5))$ is obtained by summing $\begin{bmatrix}1\\ \chi_S\end{bmatrix}$ over all stable sets $S$ of cardinality $3$ in $\A_5$. These $16$ stable sets are:
\begin{align*}
&\{1,4_0,5_0\},\ \{1,4_0,5_1\},\ \{1,4_1,5_0\},\ \{1,4_1,5_1\},\\
&\{2,4_0,5_0\},\ \{2,4_0,5_2\},\ \{2,4_2,5_0\},\ \{2,4_2,5_2\},\\
&\{3,4_0,5_0\},\ \{3,4_0,5_2\},\ \{3,4_2,5_0\},\ \{3,4_2,5_2\},\\
&\{4_0,5_1,5_2\},\ \{4_1,4_2,5_0\},\ \{4_1,4_2,5_2\},\ \{4_1,5_1,5_2\}.
\end{align*}
Therefore, in the vector $u_{\bar e^\top x=3}(\FRAC(\A_5))$, the $0$-entry is the number of stable sets of size $3$, and for each $i \in V(\A_5)$ the $i$-entry is the number of such stable sets that contain $i$.

Finally, the vector $\epsilon \begin{bmatrix}1\\ \chi_{D_0}\end{bmatrix}$ perturbs the coordinates indexed by $0$, $4_0$, and $5_0$. Thus, the entries of $v(G,\epsilon)$ are as shown in the following table:
\[
\begin{array}{l|cccccccccc}
i 
& 0 & 1 & 2 & 3 & 4_0 & 4_1 & 4_2 & 5_0 & 5_1 & 5_2
\\ \hline
\left[v(\A_5,\epsilon)\right]_i
& 16-\epsilon & 4 & 4 & 4 & 7-\epsilon & 5 & 6 & 7-\epsilon & 4 & 7
\end{array}
\]
\end{example}

In general, notice that $|D_0| = d$, and thus $\begin{bmatrix} -(d+1) \\ \bar{e} \end{bmatrix}^{\top} v(G,\epsilon) = \epsilon$. Therefore, if we manage to show that $v(G, \epsilon) \in \cone(\LS_+^k(G))$ for some $\epsilon > 0$ and a given non-negative integer $k$, then it would follow that $\bar{e}^{\top}x \leq d+1$ is not valid for $\LS_+^k(G)$, which would imply that $G$ has $\LS_+$-rank at least $k+1$. Moreover, the entries of $v(G,\epsilon)$ are chosen so that this vector behaves naturally under the graph operations appearing in our inductive arguments, which is what makes it particularly useful for proving rank lower bounds.

\begin{lemma}\label{lem403}
Let $G \in \tilde{\K}_{n,d}$ where $n \geq 3$ and $d \geq 1$, and let $k \geq 0$ be an integer. Suppose that
\begin{itemize}
\item
for every $i \in D(G)$, $v(G \ominus i_0, \epsilon) \in \cone(\LS_+^k(G \ominus i_0))$ for some $\epsilon > 0$;
\item
for every $i \in [n] \setminus D(G)$, $v(G - i, \epsilon) \in \cone(\LS_+^k(G - i))$ for some $\epsilon > 0$.
\end{itemize}
Then $v(G, \epsilon) \in \cone(\LS_+^{k+1}(G))$ for some $\epsilon > 0$.
\end{lemma}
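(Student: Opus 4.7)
The plan is to apply Lemma~\ref{lem402} to $P = \FRAC(G)$ with $c = \bar{e}$, $c_0 = d+1$, and $D \ce D_0 = \set{i_0 : i \in D(G)}$, the set of hub vertices of $G$. Note that $\FRAC(G)$ is lower-comprehensive, $\STAB(G)$ is full-dimensional, and since $G \in \tilde{\K}_{n,d}$, Lemma~\ref{lem304} guarantees that $\bar{e}^{\top}x \leq d+1$ is a facet-inducing inequality for $\STAB(G)$. The initial inequality $c_0 > c^{\top}\chi_{D_0} > 0$ of Lemma~\ref{lem402} reduces to $d+1 > d > 0$, which holds since $d \geq 1$. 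With these choices, the conclusion of Lemma~\ref{lem402} is exactly $v(G,\epsilon) \in \cone(\LS_+^{k+1}(G))$, so it suffices to verify the two pointwise conditions of Lemma~\ref{lem402}, with $D = D_0$ separating the vertex set into hubs and non-hubs.

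For a hub $i_0 \in D_0$ (with $i \in D(G)$), the maximum stable sets of $G$ containing $i_0$ are in bijection with the maximum stable sets of $G \ominus i_0$ (which has $\alpha(G \ominus i_0) = d$ by Lemma~\ref{lem302}) via $S \mapsto S \setminus \set{i_0}$, and $D_0(G \ominus i_0) = D_0 \setminus \set{i_0}$. A coordinate-by-coordinate check then shows that $u_{x_{i_0}=1,\, \bar{e}^{\top}x = d+1}(\FRAC(G)) - \epsilon [1; \chi_{D_0}]$ is precisely the ``destruction lift'' of $v(G \ominus i_0, \epsilon)$: this lift takes a vector in $\cone(\LS_+^k(G \ominus i_0))$ and produces a vector in $\cone(\LS_+^k(G))$ by setting the $i_0$-coordinate equal to the $0$-coordinate and the $i_1$- and $i_2$-coordinates to zero. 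The first hypothesis of the lemma then yields the required containment.

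For a non-hub index $i \in [n] \setminus D(G)$, maximum stable sets of $G$ with $x_i = 0$ are exactly the maximum stable sets of $G - i$ (using $\alpha(G - i) = d+1$ from Lemma~\ref{lem302}(ii)), and $D_0$ is unchanged. The analogous deletion lift $\cone(\LS_+^k(G - i)) \hookrightarrow \cone(\LS_+^k(G))$ (by setting $x_i = 0$) applied to $v(G - i, \epsilon)$ then supplies the required vector via the second hypothesis. For a wing vertex $i_\ell$ (with $i \in D(G)$ and $\ell \in \set{1,2}$), I would partition the maximum stable sets $S$ of $G$ with $i_\ell \notin S$ into those containing $i_0$ and those not. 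In the latter case, if $i_{3-\ell}$ were also absent, then $S \cup \set{i_0}$ would be a stable set of size $d+2$, contradicting $\alpha(G) = d+1$; hence $i_{3-\ell} \in S$. This yields
\begin{equation*}
u_{x_{i_\ell}=0,\, \bar{e}^{\top}x = d+1}(\FRAC(G)) = u_{x_{i_0}=1,\, \bar{e}^{\top}x = d+1}(\FRAC(G)) + w,
\end{equation*}
where $w$ collects the Case~B contributions and is a non-negative integer combination of vectors $[1; \chi_S]$ for maximum stable sets $S$; hence $w \in \cone(\STAB(G)) \subseteq \cone(\LS_+^k(G))$. Subtracting $\epsilon [1; \chi_{D_0}]$ from both sides and combining with the already-established hub case handles the wing case as well.

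Taking a single $\epsilon > 0$ small enough to satisfy all finitely many constraints and applying Lemma~\ref{lem402} completes the proof. The main obstacle I expect is carefully justifying the destruction and deletion lifts for $\cone(\LS_+^k(\cdot))$ with the correct coordinate matching, namely, that extending $x \in \LS_+^k(\FRAC(G - i))$ by $x_i = 0$, or extending $x \in \LS_+^k(\FRAC(G \ominus i_0))$ by $x_{i_0} = 1$ and $x_{i_1} = x_{i_2} = 0$, genuinely yields a point of $\LS_+^k(\FRAC(G))$ at the same level~$k$. These are standard properties of $\LS_+$ for the stable set polytope, but verifying that the lifted vectors equal the targeted $u$-vectors entry-by-entry (in particular matching the hub coordinate to the $0$-coordinate) takes some bookkeeping.
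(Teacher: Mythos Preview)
Your proposal is correct and follows essentially the same route as the paper: apply Lemma~\ref{lem402} with $P=\FRAC(G)$, $c=\bar{e}$, $c_0=d+1$, and $D=D_0$, then handle hub, unstretched, and wing vertices exactly as you describe (the paper also reduces the wing case to the already-established hub case plus a residual term in $\cone(\STAB(G))$). The destruction and deletion lifts you flag as the main obstacle are precisely what the paper invokes without further comment via its ``$\iff$'' chains, so your bookkeeping concern is real but standard.
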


\begin{proof}
We prove the result using Lemma~\ref{lem402} with $P \ce \FRAC(G)$, which is indeed lower-comprehensive and convex; moreover, $P_I=\STAB(G)$ which is full-dimensional. First, given $G \in \tilde{\K}_{n,d}$, it follows from Lemma~\ref{lem304} that $\bar{e}^{\top}x \leq d+1$ is a facet-inducing inequality for $\STAB(G)$. Now let $D$ be the set of hub vertices in $G$. Then $|D| = d \geq 1$, and so $0 < \bar{e}^{\top}\chi_D < d+1$.

Next, let $i \in D(G)$. Notice that given $S \subseteq V(G)$ where $i_0 \in S$, $S$ is a stable set of $G$ of size $d+1$ if and only if $S \setminus \set{i_0}$ is a stable set of $G \ominus i_0$ of size $d$. Therefore, 
\begin{align*}
{}& u_{x_{i_0}=1, \bar{e}^{\top}x=d+1}(\FRAC(G)) - \epsilon\begin{bmatrix} 1 \\ \chi_D \end{bmatrix} \in \cone(\LS_+^k(G)) \\
\iff{}& u_{\bar{e}^{\top}x=d}(\FRAC(G\ominus i_0)) - \epsilon\begin{bmatrix} 1 \\ \chi_{D \setminus \set{i_0}} \end{bmatrix} \in \cone(\LS_+^k(G \ominus i_0)) \\
\iff{}& v(G \ominus i_0, \epsilon) \in \cone(\LS_+^k(G \ominus i_0)).
\end{align*}
Thus, the first assumption here exactly fulfills the second condition in Lemma~\ref{lem402}. Similarly, we see that
\[
u_{x_i=0, \bar{e}^{\top}x=d+1}(\FRAC(G)) - \epsilon\begin{bmatrix} 1 \\ \chi_D \end{bmatrix} \in \cone(\LS_+^k(G)) \iff
v(G - i, \epsilon) \in \cone(\LS_+^k(G - i))
\]
for every vertex $i \not\in D$. Thus, the second assumption here fulfills the third assumption in Lemma~\ref{lem402} for the cases when $i$ is an unstretched vertex, and it only remains to establish that $v(G-i, \epsilon) \in \cone(\LS_+^k(G-i))$ when $i$ is a wing vertex.

Without loss of generality, suppose $i=j_1$ where $j \in D(G)$. Let $S \subseteq V(G)$ be a stable set in $G$ where $|S| = d+1$ and $j_1 \not\in S$. Then $S$ either contains $j_0$ or it does not.
Thus, we have
\begin{align*}
{}& u_{x_{j_1}=0, \bar{e}^{\top}x=d+1}(\FRAC(G)) - \epsilon\begin{bmatrix} 1 \\ \chi_D \end{bmatrix} \\
={}& \underbrace{u_{x_{j_1} = 0, x_{j_0}=1, \bar{e}^{\top}x=d+1}(\FRAC(G)) - \epsilon\begin{bmatrix} 1 \\ \chi_D \end{bmatrix}}_{v^{(1)}} + \underbrace{u_{x_{j_1}=0,x_{j_0}=0, \bar{e}^{\top}x=d+1}(\FRAC(G))}_{v^{(2)}}.
\end{align*} 
Notice that since $\set{j_0,j_1} \in E(G)$, if $x$ is an incidence vector of a stable set in $G$ with $x_{j_0}=1$, then it must follow that $x_{j_1}=0$. Thus,  $v^{(1)} = u_{x_{j_0}=1, \bar{e}^{\top}x=d+1}(\FRAC(G)) - \epsilon\begin{bmatrix} 1 \\ \chi_D \end{bmatrix}$, which we already showed above is in $\cone(\LS_+^k(G))$ due to our first assumption for hub vertices in $G$. Also, $v^{(2)} \in \cone(\STAB(G)) \subseteq \cone(\LS_+^k(G))$. Thus, it follows that $v^{(1)} + v^{(2)} \in \cone(\LS_+^k(G))$ (which is closed under vector addition). This in turn implies that $v(G-j_1, \epsilon) \in \cone(\LS_+^k(G - j_1))$. Thus, Lemma~\ref{lem402} applies, and we conclude that $v(G,\epsilon) \in \cone(\LS_+^{k+1}(G))$ in this case.
\end{proof}

Lemma~\ref{lem403} sets up the desired inductive step on $\tilde{\K}_{n,d}$; to close the argument, we now introduce auxiliary lemmas that pass through core stretched cliques and a different subclass of stretched cliques.

\begin{lemma}\label{lem404}
Let $G \in \K_{n,d}$ where $n \geq 3$ and $d$ is non-negative. Let $\epsilon > 0$ be a real number, and let $k$ be a non-negative integer. Also let $G' \in \K_{n,d+1}$ be a graph obtained from $G$ by stretching an unstretched vertex in $[n] \setminus D(G)$. If $v(G,\epsilon) \in \cone(\LS_+^k(G))$, then $v(G', \epsilon) \in \cone(\LS_+^k(G'))$.
\end{lemma}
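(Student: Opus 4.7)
The plan is to construct a linear ``lift'' map $L \colon \mR^{V(G)\cup\set{0}} \to \mR^{V(G')\cup\set{0}}$ that sends $\cone(\LS_+^k(G))$ into $\cone(\LS_+^k(G'))$, and then to express $v(G',\epsilon)$ as $L(v(G,\epsilon))$ plus a non-negative combination of incidence vectors of maximum stable sets of $G'$. Let $j \in [n]\setminus D(G)$ denote the unstretched vertex of $G$ that is replaced by $j_0, j_1, j_2$ in $G'$, with associated neighbor sets $A_1, A_2$. Define $L$ by $(Ly)_0 \ce y_0$, $(Ly)_v \ce y_v$ for $v \in V(G)\setminus\set{j}$, $(Ly)_{j_0} \ce y_0 - y_j$, and $(Ly)_{j_1} \ce (Ly)_{j_2} \ce y_j$. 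The structural feature driving the whole argument is that, for every $v \in V(G')\cup\set{0}$, the column $L^{\top}e_v$ equals one of $e_0$, $e_i$, or $e_0-e_i$ for some $i \in V(G)$.

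First, I would verify directly that $L$ maps $\cone(\FRAC(G))$ into $\cone(\FRAC(G'))$: the edges $\set{j_0, j_\ell}$ yield $(y_0 - y_j) + y_j = y_0$, while each edge $\set{j_\ell, v}$ with $v \in A_\ell$ yields $y_j + y_v \leq y_0$ from the $G$-inequality for $\set{j,v} \in E(G)$. Next, by induction on $k$, I would establish $L(\cone(\LS_+^k(G))) \subseteq \cone(\LS_+^k(G'))$. For the inductive step, given a certificate $M \in \widehat{\LS}_+(\LS_+^{k-1}(G))$ for some $x \in \LS_+^k(G)$, I would show that $Y \ce LML^{\top}$ certifies the projection of $L\begin{bmatrix}1 \\ x\end{bmatrix}$ in $\LS_+^k(G')$. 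Symmetry and positive semidefiniteness of $Y$ are immediate, and $Ye_0 = L(Me_0)$ follows from $L^{\top}e_0 = e_0$. The identity $\diag(Y) = Ye_0$ reduces to a direct row-by-row check; the only nontrivial row is $j_0$, where the equality uses $M[j,0] = M[j,j]$, a consequence of $Me_0 = \diag(M)$. Finally, for each $v \in V(G')$, $Ye_v = L(ML^{\top}e_v)$, and since $L^{\top}e_v$ is one of $e_0, e_i, e_0 - e_i$, the vector $ML^{\top}e_v$ is one of $Me_0, Me_i, M(e_0-e_i)$ --- each in $\cone(\LS_+^{k-1}(G))$ by the assumptions on $M$ --- so the inductive hypothesis yields $Ye_v \in \cone(\LS_+^{k-1}(G'))$; $Y(e_0-e_v)$ is handled identically.

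It remains to compute the effect of $L$ on $v(G,\epsilon)$. For each maximum stable set $S$ of $G$ (of size $d+1$ by Lemma~\ref{lem302}(iii)), $L\begin{bmatrix} 1 \\ \chi_S \end{bmatrix} = \begin{bmatrix} 1 \\ \chi_{S'} \end{bmatrix}$, where $S' \ce (S\setminus\set{j})\cup\set{j_1,j_2}$ if $j \in S$ and $S' \ce S \cup \set{j_0}$ otherwise; both choices produce maximum stable sets of $G'$ of size $d+2$. Since $j \notin D_0$, we also get $L\begin{bmatrix}1 \\ \chi_{D_0} \end{bmatrix} = \begin{bmatrix}1 \\ \chi_{D_0 \cup \set{j_0}}\end{bmatrix}$, matching the hub set of $G'$. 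Hence
\[
v(G', \epsilon) - L(v(G, \epsilon)) = \sum_{T} \begin{bmatrix} 1 \\ \chi_T \end{bmatrix},
\]
where the sum runs over maximum stable sets $T$ of $G'$ outside the image of the injective map $S \mapsto S'$ --- namely the sets of the form $S \cup \set{j_\ell}$ with $j \notin S$ and $S \cap A_\ell = \emptyset$. This residual lies in $\cone(\STAB(G')) \subseteq \cone(\LS_+^k(G'))$, and combined with $L(v(G,\epsilon)) \in \cone(\LS_+^k(G'))$ from the lifting result, we conclude $v(G',\epsilon) \in \cone(\LS_+^k(G'))$. The main obstacle is the matrix-level inductive step, and specifically the $j_0$-diagonal verification of $\diag(Y) = Ye_0$, which is the single point where the condition $Me_0 = \diag(M)$ is used in an essential way.
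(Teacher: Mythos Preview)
Your argument is correct and follows essentially the same route as the paper: you define the same lift $L$ and decompose $v(G',\epsilon)$ as $L(v(G,\epsilon))$ plus a sum over the maximum stable sets of $G'$ containing exactly one of $j_1,j_2$. The only difference is that the paper obtains the inclusion $L(\cone(\LS_+^k(G)))\subseteq\cone(\LS_+^k(G'))$ by invoking the fact that vertex stretching is a star-homomorphism together with~\cite[Proposition~11]{AuT24b}, whereas you supply a direct, self-contained proof via the certificate $Y=LML^{\top}$ (which is essentially what that cited proposition does).
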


\begin{proof}
Let $i \in [n] \setminus D(G)$ be the vertex in $G$ that is stretched to obtain $G'$. (Thus, $D(G') = D(G) \cup \set{i}$.) For convenience, we also let $D_0$ and $D_0'$ respectively denote the set of hub vertices in $G$ and $G'$. Next, we define $v' \in \mathbb{R}^{\set{0} \cup V(G')}$, where
\[
v'_j  = \begin{cases}
(v(G,\epsilon))_j & \tn{if $j = 0$ or $j \in V(G) \setminus \set{i}$;}\\
(v(G,\epsilon))_i & \tn{if $j= i_1$ or $j= i_2$;}\\
(v(G,\epsilon))_0 - (v(G,\epsilon))_i & \tn{if $j= i_0$.}
\end{cases}
\]
Since the vertex-stretching operation (regardless if it is proper or not) is a star-homomorphism (as defined in~\cite[Section 3]{AuT24b}),  it follows from~\cite[Proposition 11]{AuT24b} that $v(G, \epsilon) \in \cone(\LS_+^k(G)) \Rightarrow v' \in \cone(\LS_+^k(G'))$. Now recall that 
\[
v(G,\epsilon) = u_{\bar{e}^{\top}x = d+1}(\FRAC(G))  - \epsilon \begin{bmatrix} 1 \\ \chi_{D_0} \end{bmatrix}.
\]
Then, by the construction of $v'$, we have
\[
v' = u_{x_{i_0}=1,\bar{e}^{\top}x = d+2}(\FRAC(G')) + u_{x_{i_1}=1,x_{i_2}=1,\bar{e}^{\top}x = d+2}(\FRAC(G')) - \epsilon \begin{bmatrix} 1 \\ \chi_{D_0'} \end{bmatrix}.
\]
Now observe that every stable set of size $d+2$ in $G'$ must either contain none of $i_1, i_2$ (in which case it must contain $i_0$), both of $i_1$ and $i_2$, or exactly one of $i_1$ and $i_2$. Thus, we can write
\begin{align*}
v(G',\epsilon)
={}& u_{\bar{e}^{\top}x = d+2}(\FRAC(G')) - \epsilon \begin{bmatrix} 1 \\ \chi_{D_0'} \end{bmatrix}\\
={}& u_{x_{i_0} = 1, \bar{e}^{\top}x = d+2}(\FRAC(G')) +
 u_{x_{i_1} = 1, x_{i_2}=1, \bar{e}^{\top}x = d+2}(\FRAC(G')) + \\
{}& u_{x_{i_1} + x_{i_2}=1, \bar{e}^{\top}x = d+2}(\FRAC(G')) -\epsilon \begin{bmatrix} 1 \\ \chi_{D_0'} \end{bmatrix}\\
={}& v' + u_{x_{i_1} + x_{i_2}=1, \bar{e}^{\top}x = d+2}(\FRAC(G')).
 \end{align*}
Since $\cone(\LS_+^k(G'))$ is closed under vector addition, and that
\[
u_{x_{i_1} + x_{i_2}=1, \bar{e}^{\top}x = d+2}(\FRAC(G')) \in \cone(\STAB(G')) \subseteq \cone(\LS_+^k(G')),
\]
we conclude that $v(G',\epsilon) \in \cone(\LS_+^k(G'))$.
\end{proof}

Again, since $\tilde{\K}_{n,d}$ is not closed under the deletion of unstretched vertices or the destruction of hub vertices, we need to impose additional conditions for the inductive argument to close. The next lemmas are designed precisely to bridge this gap: they show how to pass from a core stretched clique back to a larger graph, which motivates the introduction of the class $\hat{\K}_{n,d}$. Given integers $n \geq 3$ and $d \geq 0$, we define $\hat{\K}_{n,d} \subseteq \K_{n,d}$ to be the set of stretched cliques $G$ where
\begin{equation}\label{eqhatKnd}
|\set{ \set{i_1, j_1}, \set{i_1,j_2}, \set{i_2,j_1}, \set{i_2, j_2} } \cap E(G)| = 1, \quad \forall i,j \in D(G), i \neq j.
\end{equation}
In other words, given $G \in \K_{n,d}$, if there is exactly one edge in $G$ that joins a vertex associated with $i$ and a vertex associated with $j$ for every pair of distinct indices $i,j \in D(G)$, then $G \in \hat{\K}_{n,d}$.

\def\y{0.7}
\def\sc{2.2}
\def\x{270 - 360/6}
\def\z{360/6}

\begin{figure}[htbp]
\begin{center}
\begin{tabular}{cc}

\scalebox{1}{
\begin{tikzpicture}
[scale=\sc, thick,main node/.style={circle, minimum size=3.8mm, inner sep=0.1mm,draw,font=\tiny\sffamily}]
\node[main node] at ({cos(\x+(0)*\z)},{sin(\x+(0)*\z)}) (1) {$1$};
\node[main node] at ({cos(\x+(1)*\z)},{sin(\x+(1)*\z)}) (2) {$2$};
\node[main node] at ({cos(\x+(2)*\z)},{sin(\x+(2)*\z)}) (3) {$3$};

\node[main node] at ({cos(\x+(3)*\z)},{sin(\x+(3)*\z)}) (40) {$4_0$};
\node[main node] at ({  cos(\x+(3)*\z) + (1-\y)*cos(270)},{ sin(\x+(3)*\z) + (1-\y)*sin(270)}) (41) {$4_1$};
\node[main node] at ({ cos(\x+(3)*\z) + (1-\y)*cos(150},{  sin(\x+(3)*\z) + (1-\y)*sin(150)}) (42) {$4_2$};

\node[main node] at ({cos(\x+(4)*\z)},{sin(\x+(4)*\z)}) (50) {$5_0$};
\node[main node] at ({  cos(\x+(4)*\z) + (1-\y)*cos(330)},{ sin(\x+(4)*\z) + (1-\y)*sin(330)}) (51) {$5_1$};
\node[main node] at ({ cos(\x+(4)*\z) + (1-\y)*cos(210},{ sin(\x+(4)*\z) + (1-\y)*sin(210)}) (52) {$5_2$};

\node[main node] at ({cos(\x+(5)*\z)},{sin(\x+(5)*\z)}) (60) {$6_0$};
\node[main node] at ({  cos(\x+(5)*\z) + (1-\y)*cos(30)},{ sin(\x+(5)*\z) + (1-\y)*sin(30)}) (61) {$6_1$};
\node[main node] at ({ cos(\x+(5)*\z) + (1-\y)*cos(270},{  sin(\x+(5)*\z) + (1-\y)*sin(270)}) (62) {$6_2$};

 \path[every node/.style={font=\sffamily}]
(1) edge (2)
(1) edge (3)
(2) edge (3)
(40) edge (41)
(40) edge (42)
(50) edge (51)
(50) edge (52)
(60) edge (61)
(60) edge (62)
(41) edge (1)
(41) edge (2)
(42) edge (3)
(41) edge (51)
(42) edge (62)
(41) edge (62)
(51) edge (2)
(52) edge (62)
(52) edge (1)
(52) edge (3)
(61) edge (1)
(61) edge (2)
(62) edge (3);
\end{tikzpicture}
}

&

\scalebox{1}{
\begin{tikzpicture}
[scale=\sc, thick,main node/.style={circle, minimum size=3.8mm, inner sep=0.1mm,draw,font=\tiny\sffamily}]
\node[main node] at ({cos(\x+(0)*\z)},{sin(\x+(0)*\z)}) (1) {$1$};
\node[main node] at ({cos(\x+(1)*\z)},{sin(\x+(1)*\z)}) (2) {$2$};
\node[main node] at ({cos(\x+(2)*\z)},{sin(\x+(2)*\z)}) (3) {$3$};

\node[main node] at ({cos(\x+(3)*\z)},{sin(\x+(3)*\z)}) (40) {$4_0$};
\node[main node] at ({  cos(\x+(3)*\z) + (1-\y)*cos(270)},{ sin(\x+(3)*\z) + (1-\y)*sin(270)}) (41) {$4_1$};
\node[main node] at ({ cos(\x+(3)*\z) + (1-\y)*cos(150},{  sin(\x+(3)*\z) + (1-\y)*sin(150)}) (42) {$4_2$};

\node[main node] at ({cos(\x+(4)*\z)},{sin(\x+(4)*\z)}) (50) {$5_0$};
\node[main node] at ({  cos(\x+(4)*\z) + (1-\y)*cos(330)},{ sin(\x+(4)*\z) + (1-\y)*sin(330)}) (51) {$5_1$};
\node[main node] at ({ cos(\x+(4)*\z) + (1-\y)*cos(210},{ sin(\x+(4)*\z) + (1-\y)*sin(210)}) (52) {$5_2$};

\node[main node] at ({cos(\x+(5)*\z)},{sin(\x+(5)*\z)}) (60) {$6_0$};
\node[main node] at ({  cos(\x+(5)*\z) + (1-\y)*cos(30)},{ sin(\x+(5)*\z) + (1-\y)*sin(30)}) (61) {$6_1$};
\node[main node] at ({ cos(\x+(5)*\z) + (1-\y)*cos(270},{  sin(\x+(5)*\z) + (1-\y)*sin(270)}) (62) {$6_2$};

 \path[every node/.style={font=\sffamily}]
(1) edge (2)
(1) edge (3)
(2) edge (3)
(40) edge (41)
(40) edge (42)
(50) edge (51)
(50) edge (52)
(60) edge (61)
(60) edge (62)
(41) edge (3)
(41) edge (2)
(42) edge (1)
(42) edge (2)
(42) edge (3)
(42) edge (51)
(42) edge (62)
(51) edge (1)
(51) edge (61)
(52) edge (2)
(52) edge (3)
(61) edge (2)
(61) edge (3)
(62) edge (1)
(62) edge (3);
\end{tikzpicture}
}
\\
$G_1$ & $G_2$ 
\end{tabular}
\caption{Illustrating the definition of the set of graphs $\hat{\K}_{n,d}$ (see Example~\ref{eg405})}\label{fighatKnd}
\end{center}
\end{figure}

\begin{example}\label{eg405}
Consider the graphs in Figure~\ref{fighatKnd}. First, notice that $G_1 \in \tilde{\K}_{6,3}$. However, since there are two edges ($\set{4_1,6_2}$ and $\set{4_2,6_2}$) joining vertices associated with $i=4$ and $j=6$, $G_1$ violates~\eqref{eqhatKnd}, and thus $G_1 \not\in \hat{\K}_{6,3}$.

On the other hand, notice that $G_2 \in \hat{\K}_{6,3}$ as there is exactly one edge joining a vertex associated with $i$ and a vertex associated with $j$ for every $(i,j) \in \set{ (4,5), (4,6), (5,6)}$. However, $\tilde{\Gamma}_{G_2}(4_2) = \set{1,2,3,5,6}$, and so $G_2 \not\in \tilde{\K}_{6,3}$. Using the procedure outlined in Lemma~\ref{lem305}, we can obtain that $G' \ce G_2 - \set{4_0, 4_1}$ is a core stretched clique of $G_2$.

Now notice that $\Gamma_{G_2}(4_1) \subseteq \Gamma_{G_2}(4_2)$. Thus, we can start with $G'$ and apply an improper vertex-stretching operation to $4_2$ to obtain $G_2$. As we show in Lemma~\ref{lem406}, this is not a coincidence for graphs in $\hat{\K}_{n,d} \setminus \tilde{\K}_{n,d}$.
\end{example}

Prior to this work, every known $\ell$-minimal graph (i.e., the $3$-cycle, the graphs $G_{2,1}$, $G_{2,2}$, $G_{3,1}$, and $G_{4,1}$ from Figure~\ref{figKnownEG}, as well as all other $3$-minimal and $4$-minimal graphs found in~\cite{AuT24b}) belongs to $\hat{\K}_{\ell+2, \ell-1}$. We shall show by the end of this section that many more graphs in $\hat{\K}_{\ell+2,\ell-1}$ are $\ell$-minimal. (We will also show in Section~\ref{sec04} that $\ell$-minimal graphs also exist outside of $\hat{\K}_{\ell+2,\ell-1}$.)

First, the following lemma makes concrete a property of the stretched cliques in $\hat{\K}_{n,d} \setminus \tilde{\K}_{n,d}$ we mentioned in Example~\ref{eg405}.

\begin{lemma}\label{lem406}
Let $G \in \hat{\K}_{n,d}$ where $n \geq 3$ and $d$ is non-negative. Suppose $G$ has deficiency $k \geq 1$. Then $G$ contains a core stretched clique $G' \in \hat{\K}_{n,d-k}$. Moreover, $G$ can be obtained from $G'$ by $2$-stretching $k$ vertices.
\end{lemma}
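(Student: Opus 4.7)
The plan is to proceed by induction on the deficiency $k$, using the reduction of Lemma~\ref{lem305} at each step while tracking that the $\hat{\K}$ property is inherited. Since $k \geq 1$ implies $G \notin \tilde{\K}_{n,d}$, the construction in the proof of Lemma~\ref{lem305} lets me pick $i \in D(G)$ and $\ell \in \set{1,2}$ with $\tilde{\Gamma}_G(i_\ell) = [n] \setminus \set{i}$ and form $G' \ce G - \set{i_0, i_{3-\ell}} \in \K_{n,d-1}$; in $G'$, the vertex $i_\ell$ plays the role of an unstretched vertex associated with the index $i$.

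The crucial structural observation I would establish first is
\[
\Gamma_G(i_{3-\ell}) \subseteq \Gamma_G(i_\ell) \cup \set{i_0},
\]
which uses both the $\hat{\K}$ hypothesis and the equality $\tilde{\Gamma}_G(i_\ell) = [n] \setminus \set{i}$. The latter forces $i_\ell$ to be adjacent to every unstretched vertex in $[n] \setminus D(G)$ and to at least one wing vertex associated with each $j \in D(G) \setminus \set{i}$; for each such $j$, condition~\eqref{eqhatKnd} then rules out any edge between $i_{3-\ell}$ and either of $j_1, j_2$. Hence $\Gamma_G(i_{3-\ell}) \setminus \set{i_0} \subseteq [n] \setminus D(G) \subseteq \Gamma_G(i_\ell)$, so the sets $A_\ell \ce \Gamma_G(i_\ell) \setminus \set{i_0}$ and $A_{3-\ell} \ce \Gamma_G(i_{3-\ell}) \setminus \set{i_0}$ are both contained in $\Gamma_{G'}(i_\ell)$ with union $\Gamma_{G'}(i_\ell)$, and $2$-stretching $i_\ell$ in $G'$ with these choices --- possibly improperly, in the degenerate case $\Gamma_G(i_{3-\ell}) = \set{i_0}$ --- recovers $G$.

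To close the induction I would verify inheritance of~\eqref{eqhatKnd}: since $D(G') = D(G) \setminus \set{i}$ and no wing vertex associated with any $j \in D(G')$ has been deleted, the unique-edge condition is inherited from $G$, giving $G' \in \hat{\K}_{n,d-1}$. A short bookkeeping shows that the deficiency of $G'$ is exactly $k-1$: augmenting any stretched-clique decomposition of $G'$ by the pair $\set{i_0, i_{3-\ell}}$ yields one for $G$ with one more edge, so the deficiency is at most $k-1$, while it cannot be strictly smaller by minimality of $k$. The base case $k=1$ then already gives $G' \in \tilde{\K}_{n,d-1} \cap \hat{\K}_{n,d-1}$ as the desired core stretched clique, and $G$ is obtained from $G'$ by a single $2$-stretching. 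For $k \geq 2$, the inductive hypothesis applied to $G'$ produces a core stretched clique $G'' \in \hat{\K}_{n,d-k}$ together with $k-1$ further $2$-stretchings recovering $G'$ from $G''$, and appending the $2$-stretching of $i_\ell$ gives the desired sequence of $k$ stretchings from $G''$ to $G$. I expect the most delicate step to be the structural inclusion $\Gamma_G(i_{3-\ell}) \subseteq \Gamma_G(i_\ell) \cup \set{i_0}$: it is what simultaneously makes the $2$-stretching operation well-defined and allows the $\hat{\K}$ property to propagate cleanly through the induction.
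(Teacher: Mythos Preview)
Your argument shares the paper's key structural observation --- that $\Gamma_G(i_{3-\ell}) \subseteq \Gamma_G(i_\ell) \cup \{i_0\}$ --- and this is indeed the heart of the matter. However, your deficiency bookkeeping has the direction reversed. Augmenting a stretched-clique decomposition of $G'$ by the edge $\{i_0, i_{3-\ell}\}$ produces a decomposition of $G$; so if $G'$ has deficiency $k'$, then $G$ has a decomposition with $k'+1$ edges, whence $k \leq k'+1$, i.e.\ $k' \geq k-1$. This is the \emph{opposite} of the inequality you state, and your second clause (``it cannot be strictly smaller by minimality of $k$'') proves the same direction again. You never establish $k' \leq k-1$, and without it the induction does not close: you cannot apply the inductive hypothesis unless $k' < k$, and even if you could, the core you would extract would lie in $\hat{\K}_{n,(d-1)-k'}$, which equals $\hat{\K}_{n,d-k}$ only when $k' = k-1$.

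The missing inequality $k' \leq k-1$ is true, but it is not ``short bookkeeping'': one must argue that in \emph{some} minimum decomposition $\{C_0,\ldots,C_k\}$ of $G$, the pair $\{i_0,i_{3-\ell}\}$ actually occurs as one of the $C_j$. This requires ruling out $i_0 \in C_0$ (which would force $\tilde{\Gamma}_{C_0}(i_\ell) = [n]\setminus\{i\}$ via your structural inclusion and the $\hat{\K}$ hypothesis, contradicting $C_0 \in \tilde{\K}$) and ruling out $C_j = \{i_0, i_\ell\}$ (which would force $i_{3-\ell}$ to serve as the unstretched $i$ in $C_0$, impossible except in the degenerate case $d=k=1$ since $i_{3-\ell}$ has no wing-vertex neighbours). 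The paper's proof sidesteps this entirely by taking a minimum decomposition of $G$ at the outset and building \emph{outward} from its core, applying the inclusion $\Gamma_G(i_1) \subseteq \Gamma_G(i_2)$ once for each edge $C_j$; this avoids ever having to compare deficiencies of $G$ and $G'$.
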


\begin{proof}
Given $G \in \hat{\K}_{n,d}$ with deficiency $k \geq 1$, there exists a stretched-clique decomposition $\set{C_0, \ldots, C_k}$ of $G$. Let $G'$ be the core stretched clique of $G$ induced by $C_0$. Since $G'$ is an induced subgraph of $G$, it cannot violate~\eqref{eqhatKnd}. Hence, we have $G' \in \hat{\K}_{n,d-k}$.

It remains to show that $G$ can be obtained from $G'$ by $2$-stretching $k$ vertices in $G'$. Consider again the stretched-clique decomposition of $G$, and focus on the edge $C_1 \in E(G)$. Without loss of generality, suppose $C_1 = \set{i_0, i_1}$ for some $i \in D(G)$. Since $i_2 \in V(G')$, we know that $\tilde{\Gamma}_{G}(i_2) = [n] \setminus \set{i}$. Thus, for every $j \in D(G) \setminus \set{i}$, there is an edge between $i_2$ and one of $j_1, j_2$. Since $G \in \hat{\K}_{n,d}$ and thus satisfies~\eqref{eqhatKnd},  there is no edge between $i_1$ and a vertex associated with $j$. This implies that $\Gamma_{G}(i_1) \subseteq \Gamma_{G}(i_2)$. Thus, we can $2$-stretch $i_2 \in V(G')$ to obtain the subgraph of $G$ induced by $V(G') \cup C_1$. Applying this procedure iteratively to $C_2, \ldots, C_k$ finishes the proof.
\end{proof}

Lemmas~\ref{lem404} and~\ref{lem406} combine to imply the following.

\begin{lemma}\label{lem407}
Let $G \in \hat{\K}_{n,d}$ where $n \geq 3$ and $d$ is non-negative. Let $\epsilon > 0$ be a real number, let $k$ be a non-negative integer, and let $G'$ be a core stretched clique of $G$. If $v(G', \epsilon) \in \cone(\LS_+^k(G'))$, then $v(G,\epsilon) \in \cone(\LS_+^k(G))$.
\end{lemma}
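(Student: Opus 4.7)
The plan is to derive Lemma~\ref{lem407} directly by combining Lemmas~\ref{lem406} and~\ref{lem404}. Let $t$ denote the deficiency of $G$. I first dispatch the trivial case $t=0$: here $G \in \tilde{\K}_{n,d}$, so $G$ is its own (unique) core stretched clique, and the hypothesis $v(G', \epsilon) \in \cone(\LS_+^k(G'))$ is literally the conclusion.

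For $t \geq 1$, I would invoke Lemma~\ref{lem406} to produce a sequence of graphs
\[
G^{(0)} = G', \; G^{(1)}, \ldots, G^{(t)} = G,
\]
with $G^{(i)} \in \hat{\K}_{n, d-t+i}$, such that each $G^{(i+1)}$ is obtained from $G^{(i)}$ by $2$-stretching a single vertex. The key observation — which I would spell out explicitly — is that the vertex being stretched at step $i$ is an \emph{unstretched} vertex of $G^{(i)}$ (i.e., lies in $[n]\setminus D(G^{(i)})$). This is exactly the content of the construction in the proof of Lemma~\ref{lem406}: when we adjoin the edge $C_j = \{i_0, i_1\}$ to the already-built subgraph, the vertex $i_2$ plays the role of the previously unstretched vertex at index $i$, and the $2$-stretching acts on it. This is the step that needs the most careful bookkeeping and where one must verify the precise hypothesis of Lemma~\ref{lem404} holds.

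Once the sequence is in place, the proof is a straightforward induction on $i$. Starting from the hypothesis $v(G^{(0)}, \epsilon) \in \cone(\LS_+^k(G^{(0)}))$, I apply Lemma~\ref{lem404} at each step to propagate
\[
v(G^{(i)}, \epsilon) \in \cone(\LS_+^k(G^{(i)})) \;\implies\; v(G^{(i+1)}, \epsilon) \in \cone(\LS_+^k(G^{(i+1)})),
\]
noting that Lemma~\ref{lem404} uses the \emph{same} $\epsilon$ and the \emph{same} iteration count $k$ on both sides, and that $d$ increases by exactly one (so that $v(G^{(i+1)}, \epsilon)$ correctly refers to the rank inequality $\bar{e}^{\top}x \leq d-t+i+2$). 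After $t$ such applications, we conclude $v(G, \epsilon) \in \cone(\LS_+^k(G))$, as required.

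The proof is essentially a one-line composition of Lemmas~\ref{lem406} and~\ref{lem404}; there is no serious obstacle beyond carefully identifying, at each step of the decomposition in Lemma~\ref{lem406}, the vertex being $2$-stretched as an unstretched vertex of the current intermediate graph — so that Lemma~\ref{lem404} is literally applicable.
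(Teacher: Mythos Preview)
Your proposal is correct and matches the paper's approach exactly: the paper simply states that Lemmas~\ref{lem404} and~\ref{lem406} combine to yield Lemma~\ref{lem407}, and your write-up is a careful unpacking of that combination. (One minor quibble: $t$ should be the number of edges in the particular decomposition that produces the given $G'$, which need not equal the deficiency of $G$; since the argument in the proof of Lemma~\ref{lem406} applies verbatim to any stretched-clique decomposition, your chain of applications of Lemma~\ref{lem404} goes through unchanged with this adjustment.)
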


We are now finally ready to use Lemma~\ref{lem403} to show that, for many stretched cliques, the point $v(G,\epsilon)$ (which does not belong to $\STAB(G)$ for all sufficiently small $\epsilon > 0$) survives many iterations of $\LS_+$.

\begin{proposition}\label{prop408}
Let $G \in \hat{\K}_{n,d} \cap \tilde{\K}_{n,d}$, where $n \geq 3$ and $d$ is non-negative. Also, let $k \ce \max\set{3, \omega(G)}$. Then $v(G,\epsilon) \in \cone\left(\LS_+^{n-k}(G)\right)$ for some $\epsilon > 0$.
\end{proposition}

\begin{proof}
We prove our claim by induction on $n$. If $k=3$ (which implies $\omega(G) \in \set{2,3}$), then the base case is $n=3$. Here, we have $d \in \set{0,1,2,3}$, and the only graph in $\hat{\K}_{3,d} \cap \tilde{\K}_{3,d}$ is the $(2d+3)$-cycle. One can check that $v(G, \epsilon) \in \cone\left(\FRAC(G)\right) = \cone\left(\LS_+^0(G)\right)$ for all $\epsilon \in [0,1]$ in all four cases. If $k \geq 4$, then the base case is when $n=k$, which implies that $G = K_n$. In this case, we have $v(K_n, \epsilon) \in \cone\left(\FRAC(K_n)\right)$ for every $\epsilon \in [0, n-2]$.

Next, we prove the inductive step by applying the framework outlined in Lemma~\ref{lem403}. Let $G \in \hat{\K}_{n,d} \cap \tilde{\K}_{n,d}$ where $n \geq k+1$ (which implies $d \geq 1$). Given $i \in D(G)$, observe that $G \ominus i_0 \in \hat{\K}_{n-1, d-1}$. Then $G \ominus i_0$ contains a core stretched clique $G' \in \hat{\K}_{n-1,d'} \cap \tilde{\K}_{n-1,d'}$ for some $d' \leq d-1$. Also, since $G'$ is a subgraph of $G$, we have $\omega(G') \leq \omega(G) \leq k$. Thus, by the inductive hypothesis, we have $v(G', \epsilon) \in \cone(\LS_+^{n-k-1}(G'))$ for some $\epsilon > 0$. Then Lemma~\ref{lem407} implies that $v(G \ominus i_0, \epsilon) \in \cone(\LS_+^{n-k-1}(G \ominus i_0))$ for some $\epsilon > 0$.

Next, given $i \in [n] \setminus D(G)$, we have $G - i \in \hat{\K}_{n-1, d}$. Using a similar argument as in the preceding paragraph, observe that $G-i$ contains a core stretched clique $G'' \in \hat{\K}_{n-1,d''} \cap  \tilde{\K}_{n-1,d''}$ for some $d'' \leq d$, with $\omega(G'') \leq \omega(G) \leq k$. Thus, the inductive hypothesis implies that $v(G'', \epsilon) \in \cone(\LS_+^{n-k-1}(G''))$ for some $\epsilon > 0$, which (due to Lemma~\ref{lem407} again) implies that $v(G -i, \epsilon) \in \cone(\LS_+^{n-k-1}(G - i))$ for some $\epsilon > 0$. Thus, it follows from Lemma~\ref{lem403} that $v(G, \epsilon) \in \cone(\LS_+^{n-k}(G))$ for some $\epsilon > 0$.
\end{proof}

Proposition~\ref{prop408} readily implies the following, which provides a $\LS_+$-rank lower bound to many stretched cliques that depends on the clique number of the graph.

\begin{theorem}\label{thm409}
Let $G \in \hat{\K}_{n,d}$ where $n \geq 3$ and $d$ is non-negative. Let $k \ce \max\set{3, \omega(G)}$. Then $v(G,\epsilon) \in \cone\left(\LS_+^{n-k}(G)\right)$, and $r_+(G) \geq n-k+1$.
\end{theorem}

\begin{proof}
Given $G \in \hat{\K}_{n,d}$, it must contain a core stretched clique $G' \in \hat{\K}_{n,d'} \cap \tilde{\K}_{n,d'}$ for some $d' \leq d$. Since $\omega(G') \leq \omega(G)$, Proposition~\ref{prop408} implies that $v(G', \epsilon) \in \cone(\LS_+^{n-k}(G'))$ for some $\epsilon > 0$. Then Lemma~\ref{lem407} implies that $v(G, \epsilon) \in \cone(\LS_+^{n-k}(G))$ for some $\epsilon > 0$. Since $v(G, \epsilon)$ violates $-(d+1)x_0 + \bar{e}^{\top}x \leq 0$ (which is valid for $\cone\left(\STAB(G)\right)$), it follows that $r_+(G) \geq n-k+1$.
\end{proof}

\section{ $\ell$-minimal graphs and implications}\label{sec04}

Recall that a graph $G$ is $\ell$-minimal if $r_+(G) = \ell$ and $|V(G)| = 3\ell$. In this section, we describe a number of $\ell$-minimal graphs, as well as other implications of the structural results we developed in Section~\ref{sec03}.

First, notice that every graph $G \in \K_{\ell+2, \ell-1}$ contains exactly $3\ell$ vertices. Thus, Theorem~\ref{thm409} implies the following.

\begin{corollary}\label{cor501}
Let $\ell$ be a positive integer, and let $G \in \hat{\K}_{\ell+2, \ell-1}$ where $\omega(G) \leq 3$. Then $G$ is $\ell$-minimal.
\end{corollary}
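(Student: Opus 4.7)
The statement is a direct consequence of Theorem~\ref{thm409} combined with the prior upper bound Theorem~\ref{thm101}, so the plan is essentially bookkeeping. The plan is to verify three things in order: that $G$ has exactly $3\ell$ vertices, that $r_+(G) \geq \ell$ from the lower bound machinery just developed, and that $r_+(G) \leq \ell$ from Lipt{\'a}k--Tun{\c c}el.

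First, I would count vertices. Since $G \in \K_{\ell+2,\ell-1}$, it is obtained from $K_{\ell+2}$ by applying $\ell-1$ stretching operations, each of which replaces a single vertex with three vertices (one hub and two wings), thereby adding exactly two vertices per operation. Hence
\[
|V(G)| = (\ell+2) + 2(\ell-1) = 3\ell.
\]
This is the vertex count required by the definition of $\ell$-minimality.

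Next, I would produce the lower bound on $r_+(G)$ by applying Theorem~\ref{thm409} with $n = \ell+2$, $d = \ell-1$, and $k = \max\{3,\omega(G)\}$. The hypothesis $\omega(G) \leq 3$ forces $k = 3$, and therefore
\[
r_+(G) \geq n - k + 1 = (\ell+2) - 3 + 1 = \ell.
\]
There is nothing to verify beyond the hypothesis $G \in \hat{\K}_{n,d}$ and the clique-number restriction, both of which are assumed.

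Finally, the matching upper bound $r_+(G) \leq |V(G)|/3 = \ell$ is immediate from Theorem~\ref{thm101}. Combining the two bounds yields $r_+(G) = \ell$, which together with $|V(G)| = 3\ell$ is exactly the definition of an $\ell$-minimal graph. There is no genuine obstacle here --- all the analytic work sits in the proof of Theorem~\ref{thm409}; the corollary simply instantiates the parameters so that $n - k + 1$ and $|V(G)|/3$ both land on $\ell$.
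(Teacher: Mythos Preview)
Your proposal is correct and matches the paper's reasoning exactly: the paper states the corollary as an immediate consequence of Theorem~\ref{thm409} together with the vertex count $|V(G)|=3\ell$ noted just before it, and the upper bound from Theorem~\ref{thm101} is implicit in the definition of $\ell$-minimality.
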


Notice that the graphs covered by Corollary~\ref{cor501} are also extremal for the classical upper bound $r_+(G)\le \alpha(G)$. Indeed, if $G\in \hat{\K}_{\ell+2,\ell-1}$, then
Lemma~\ref{lem302}(iii) gives $\alpha(G)=\ell$, while Corollary~\ref{cor501} gives $r_+(G)=\ell$. Thus $r_+(G)=\alpha(G)$ for all graphs in this family.

Now recall the graphs $\A_k$ defined in Section~\ref{sec01} and illustrated in Figure~\ref{figAk}. We now apply Corollary~\ref{cor501} to show that these graphs are indeed $\ell$-minimal, giving a proof to Proposition~\ref{prop502}.

\begin{proof}[Proof of Proposition~\ref{prop502}]
We prove the result using Corollary~\ref{cor501}. First, as suggested by the vertex labels, we see that $\A_k \in \K_{k,k-3}$ for every $k \geq 3$, with $D(\A_k) = \set{4, \ldots, k}$. Now for every $i,j \in D(\A_k)$ where $i<j$, the only edge between vertices associated with $i$ and $j$ is $\set{i_2,j_1}$, and thus $\A_k \in \hat{\K}_{k,k-3}$. 

Next, we show that $\A_k$ does not contain $K_4$ as an induced subgraph. Observe that each hub vertex has degree two and thus cannot be contained in a $K_4$. Also, $\Gamma_{\A_k}(1) = \set{2,3, 4_2, \ldots, k_2}$. Since $\set{4_2, \ldots, k_2}$ is a stable set, $1$ cannot be contained in a $K_4$ either. However, among the remaining vertices (the unstretched vertices $2$, $3$, and the wing vertices), the only $3$-cycles are induced by $\set{2,3,i_1}$ for some $i \in \set{4, \ldots, k}$. Thus, it follows from Corollary~\ref{cor501} that $\A_k$ is $(k-2)$-minimal.
\end{proof}

Thus, we now know that $\ell$-minimal graphs do exist for every positive integer $\ell$. In fact, we show that the number of $\ell$-minimal graphs grows (at least) exponentially as a function of $\ell$. Given an integer $k \geq 3$ and a subset $S \subseteq \set{4, \ldots, k}$, we define the graph $\A_{k,S}$ where
\begin{align*}
V(\A_{k,S}) &\ce V(\A_k), \\
E(\A_{k,S}) &\ce E(\A_k) \cup \set{ \set{i_2,2} : i \in S}.
\end{align*}

Using the same ideas from the proof of Proposition~\ref{prop502}, one can show that $\A_{k,S} \in \hat{\K}_{k,k-3}$ and $\omega(\A_{k,S}) = 3$ for all possible choices of $S$, and so $r_+(\A_{k,S}) = k-2$ for every possible choice of $S$. The next result shows that distinct choices of $S$ indeed produce non-isomorphic graphs.

\begin{lemma}\label{lem503}
Let $k \geq 3$ be an integer, and let $S,S' \subseteq \set{4, \ldots, k}$. If $S \neq S'$, then $\A_{k,S}$ and $\A_{k,S'}$ are not isomorphic to each other. 
\end{lemma}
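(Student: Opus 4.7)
The plan is to prove the stronger statement that the (multi-set) degree sequence of $\A_{k,S}$ already determines $S$. Reading the degrees directly off the construction, the unstretched vertices $1, 2, 3$ have degrees $k-1$, $k-1+|S|$, $k-1$; each hub $i_0$ has degree $2$; each wing $i_1$ has degree $i-1$; and each wing $i_2$ has degree $(k-i+2) + [i \in S]$. A first consequence is $|E(\A_{k,S})| = |E(\A_k)| + |S|$, so any isomorphism $\A_{k,S} \cong \A_{k,S'}$ immediately forces $|S| = |S'|$.

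Assume from here on that $|S| = |S'|$. Among the degree contributions above, those from $\set{1,2,3}$, the hubs, and the $i_1$ wings depend only on $|S|$ (or not at all on $S$); only the $i_2$ wings introduce $S$-sensitive degrees. Consequently, equality of the degree multi-sets of $\A_{k,S}$ and $\A_{k,S'}$ reduces to the equality of the multi-sets
\[
M_S \ce \set{(k-i+2) + [i \in S] : 4 \le i \le k}, \qquad M_{S'} \ce \set{(k-i+2) + [i \in S'] : 4 \le i \le k}.
\]
To extract $S$ from $M_S$, I would encode $M_S$ as the polynomial $P_S(x) \ce \sum_{i=4}^{k} x^{(k-i+2) + [i \in S]}$ and compute
\[
P_S(x) - P_\emptyset(x) = (x-1) \sum_{i \in S} x^{k-i+2}.
\]
Hence $M_S = M_{S'}$ yields the polynomial identity $\sum_{i \in S} x^{k-i+2} = \sum_{i \in S'} x^{k-i+2}$, and because the exponents $k-i+2$ are pairwise distinct for $i \in \set{4, \ldots, k}$, this forces $S = S'$.

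The main subtlety lies in the reduction step in the middle. The $i_2$-wing degrees lie in $\set{2, \ldots, k-1}$ and overlap with the $i_1$-wing degrees $\set{3, \ldots, k-1}$, so a priori a $+1$ shift at some $i_2$ could conceivably be absorbed by another class of vertices in the combined multi-set; similarly, the $|S|$-dependent degree of vertex $2$ could interfere. What keeps this under control is precisely the hypothesis $|S| = |S'|$: it makes the unstretched, hub, and $i_1$-wing contributions match on the nose on both sides, so any remaining discrepancy must be confined to the $M_S$-part, where the polynomial identity closes out the argument.
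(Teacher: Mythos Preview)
Your proof is correct and follows essentially the same approach as the paper: both argue that the degree multiset of $\A_{k,S}$ determines $S$, by separating out the $S$-independent contributions and then reading $S$ off the residual $i_2$-degrees. Your polynomial identity $P_S(x)-P_\emptyset(x)=(x-1)\sum_{i\in S}x^{k-i+2}$ is a clean, explicit justification of the recovery step that the paper only asserts in passing.
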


\begin{proof}
First, notice that the degrees of vertices in $\set{1,3} \cup \set{ i_0, i_1 : 4 \leq i \leq k}$ are invariant under the choice of $S$. For the other vertices, we have $\deg(2) = k-1 + |S|$, and
\[
\deg(i_2) = \begin{cases}
k-i+2 &\tn{if $i \not\in S$;}\\
k-i+3 &\tn{if $i \in S$.}
\end{cases}
\]
Thus, given the list of vertex degrees of $\A_{k,S}$, we can remove the entries that we know correspond to the vertex degrees of $\set{1,3} \cup \set{i_0, i_1 : 4 \leq i \leq k}$, and then uniquely recover the set $S$ from the remaining vertex degrees. Therefore, we see that $\A_{k,S}$ and $\A_{k,S'}$ have distinct lists of vertex degrees whenever $S \neq S'$, and so the two graphs cannot possibly be isomorphic to each other.
\end{proof}

Therefore, for every $k \geq 3$, $\set{ \A_{k,S} : S \subseteq \set{4,\ldots, k}}$ gives a set of $2^{k-3}$ non-isomorphic $(k-2)$-minimal graphs, and we have the following.

\begin{theorem}\label{thm504}
There are at least $2^{\ell-1}$ non-isomorphic $\ell$-minimal graphs for every positive integer $\ell$.
\end{theorem}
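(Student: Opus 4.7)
The plan is to fix $k = \ell + 2$ and exhibit the $2^{k-3} = 2^{\ell-1}$ graphs in the family $\set{\A_{k,S} : S \subseteq \set{4,\ldots,k}}$ as pairwise non-isomorphic $\ell$-minimal graphs. The argument splits cleanly into showing (a) each $\A_{k,S}$ is $\ell$-minimal, which will come from Corollary~\ref{cor501}, and (b) distinct subsets yield non-isomorphic graphs, which is exactly Lemma~\ref{lem503}. Observe first that $\A_{k,S}$ has $3 + 3(k-3) = 3\ell$ vertices, so minimality will follow as soon as we show $r_+(\A_{k,S}) = \ell$.

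For (a), I would verify in turn that $\A_{k,S} \in \hat{\K}_{k,k-3}$ and that $\omega(\A_{k,S}) = 3$, so that Corollary~\ref{cor501} applies. Membership in $\K_{k,k-3}$ is immediate: $\A_{k,S}$ is obtained from $K_k$ by the same sequence of $2$-stretchings at vertices $4, \ldots, k$ that produces $\A_k$, since the extra edges $\set{\set{i_2,2} : i \in S}$ only add wing-to-unstretched-vertex adjacencies and do not disrupt the stretching construction. For the defining condition~\eqref{eqhatKnd}, note that for any two distinct $i,j \in D(\A_{k,S}) = \set{4,\ldots,k}$, the only edge between a vertex associated with $i$ and a vertex associated with $j$ is the one inherited from $\A_k$ (namely $\set{i_2,j_1}$ if $i<j$), because the new edges all have vertex $2 \notin D(\A_{k,S})$ as an endpoint.

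For the clique-number check, any clique of size $4$ in $\A_{k,S}$ that was not already present in $\A_k$ must contain the edge $\set{i_2,2}$ for some $i \in S$, hence must also contain a vertex adjacent to both $2$ and $i_2$. The common neighbours of $2$ and $i_2$ in $\A_k$ are $1$ together with the vertices $j_1$ for $j > i$. However, in $\A_k$ one checks directly from the adjacency rules that vertex $1$ is not adjacent to any $j_1$, the vertices $j_1, j'_1$ are mutually non-adjacent, and distinct vertices $i_2, i'_2$ with $i, i' \in S$ are also non-adjacent. Thus none of the new triangles created by the additions extend to a $K_4$, so $\omega(\A_{k,S}) = 3$. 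Corollary~\ref{cor501} then yields $r_+(\A_{k,S}) = k - 2 = \ell$, establishing that each $\A_{k,S}$ is $\ell$-minimal.

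For (b), Lemma~\ref{lem503} already guarantees that $\A_{k,S}$ and $\A_{k,S'}$ are non-isomorphic whenever $S \neq S'$. Combining this with (a) produces the desired $2^{\ell-1}$ non-isomorphic $\ell$-minimal graphs. The only substantive obstacle is the clique-number verification in the preceding paragraph: one must rule out the several ways in which a new edge $\set{i_2,2}$ could combine with existing adjacencies to form a $K_4$, but this reduces to the explicit adjacency description of $\A_k$ and is a finite case analysis.
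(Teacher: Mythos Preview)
Your proposal is correct and follows essentially the same approach as the paper: exhibit the family $\set{\A_{k,S} : S \subseteq \set{4,\ldots,k}}$ with $k=\ell+2$, verify via Corollary~\ref{cor501} that each member is $\ell$-minimal (checking $\hat{\K}_{k,k-3}$ membership and $\omega=3$), and invoke Lemma~\ref{lem503} for pairwise non-isomorphism. The paper is terser, simply remarking that the verification ``uses the same ideas from the proof of Proposition~\ref{prop502}'', whereas you spell out the clique-number case analysis explicitly; your argument there is sound (for a $K_4$ through a new edge $\set{i_2,2}$ one needs two mutually adjacent common neighbours of $i_2$ and $2$, and you correctly observe none exist).
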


Theorem~\ref{thm504} is tight for $\ell =1$ and $\ell =2$, as all $1$-minimal and $2$-minimal graphs are known. For $\ell = 3$, an exhaustive computational search found $13$  non-isomorphic graphs that satisfy the conditions in Corollary~\ref{cor501}. We also computed the optimal value of $\max\set{ \bar{e}^{\top}x : x \in \LS_+^2(G)}$ for each of these graphs using CVX, a package for specifying and solving convex programs~\cite{CVX, GrantB08} with the SeDuMi solver~\cite{Sturm99}. All $13$ graphs have an optimal value of at least $3.004$, which aligns with our analytical findings that they are all $3$-minimal. These graphs and their corresponding optimal values are listed in Figure~\ref{fighatK52}. We also performed a similar search for $\ell = 4$, and found $588$ non-isomorphic graphs in $\hat{\K}_{6,3}$ with clique number at most $3$. This suggests that the number of $\ell$-minimal graphs grows rather rapidly as a function of $\ell$.

\def\x{270 - 135}
\def\z{360/4}
\def\y{0.7}
\def\sc{1.2}
\def\placev{\node[main node] at ({cos(\x+(1)*\z)},{sin(\x+(1)*\z)}) (1) {}; \node[main node] at ({cos(\x+(1.5)*\z)},{sin(\x+(1.5)*\z)}) (2) {}; \node[main node] at ({cos(\x+(2)*\z)},{sin(\x+(2)*\z)}) (3) {}; \node[main node] at ({ \y* cos(\x+(3)*\z) + (1-\y)*cos(\x+(2)*\z)},{ \y* sin(\x+(3)*\z) + (1-\y)*sin(\x+(2)*\z)}) (41) {}; \node[main node] at ({cos(\x+(3)*\z)},{sin(\x+(3)*\z)}) (40) {}; \node[main node] at ({ \y* cos(\x+(3)*\z) + (1-\y)*cos(\x+(4)*\z)},{ \y* sin(\x+(3)*\z) + (1-\y)*sin(\x+(4)*\z)}) (42) {}; \node[main node] at ({ \y* cos(\x+(4)*\z) + (1-\y)*cos(\x+(3)*\z)},{ \y* sin(\x+(4)*\z) + (1-\y)*sin(\x+(3)*\z)}) (51) {}; \node[main node] at ({cos(\x+(4)*\z)},{sin(\x+(4)*\z)}) (50) {}; \node[main node] at ({ \y* cos(\x+(4)*\z) + (1-\y)*cos(\x+(5)*\z)},{ \y* sin(\x+(4)*\z) + (1-\y)*sin(\x+(5)*\z)}) (52) {};}

\def\placecommone{  \path (1) edge (2) (2) edge (3) (3) edge (1) (40) edge (41) (40) edge (42) (50) edge (51) (50) edge (52);}

\begin{figure}[htbp]
\begin{center}
\begin{tabular}{ccccccc}
\begin{tikzpicture}[scale=\sc, thick,main node/.style={circle,  minimum size=1.5mm, fill=black, inner sep=0.1mm,draw,font=\tiny\sffamily}] \placev \placecommone
 \path (1) edge (41) (2) edge (41) (3) edge (42)  (1) edge (51) (2) edge (51) (3) edge (52) (41) edge (52);
\end{tikzpicture}
& 
\begin{tikzpicture}[scale=\sc, thick,main node/.style={circle,  minimum size=1.5mm, fill=black, inner sep=0.1mm,draw,font=\tiny\sffamily}] \placev \placecommone
 \path (1) edge (41) (2) edge (41) (3) edge (42)  (1) edge (52) (2) edge (51) (3) edge (51) (41) edge (51);
\end{tikzpicture}
& 
\begin{tikzpicture}[scale=\sc, thick,main node/.style={circle,  minimum size=1.5mm, fill=black, inner sep=0.1mm,draw,font=\tiny\sffamily}] \placev \placecommone
 \path (1) edge (41) (2) edge (41) (3) edge (42)  (1) edge (52) (2) edge (51) (3) edge (51) (41) edge (52);
\end{tikzpicture}
& 
\begin{tikzpicture}[scale=\sc, thick,main node/.style={circle,  minimum size=1.5mm, fill=black, inner sep=0.1mm,draw,font=\tiny\sffamily}] \placev \placecommone
 \path (1) edge (41) (2) edge (41) (3) edge (42)  (1) edge (51) (2) edge (51) (3) edge (52) (42) edge (52);
\end{tikzpicture}
& 
\begin{tikzpicture}[scale=\sc, thick,main node/.style={circle,  minimum size=1.5mm, fill=black, inner sep=0.1mm,draw,font=\tiny\sffamily}] \placev \placecommone
 \path (1) edge (41) (2) edge (41) (3) edge (42)  (1) edge (52) (2) edge (51) (3) edge (51) (42) edge (52);
\end{tikzpicture}
& 
\begin{tikzpicture}[scale=\sc, thick,main node/.style={circle,  minimum size=1.5mm, fill=black, inner sep=0.1mm,draw,font=\tiny\sffamily}] \placev \placecommone
 \path (1) edge (41) (1) edge (42) (2) edge (41) (3) edge (42)  (1) edge (52) (2) edge (51) (3) edge (52) (41) edge (51);
\end{tikzpicture}
& 
\begin{tikzpicture}[scale=\sc, thick,main node/.style={circle,  minimum size=1.5mm, fill=black, inner sep=0.1mm,draw,font=\tiny\sffamily}] \placev \placecommone
 \path (1) edge (41) (1) edge (42) (2) edge (41) (3) edge (42)  (1) edge (51) (2) edge (52) (3) edge (51) (41) edge (51);
\end{tikzpicture}\\
3.01280&
3.01224&
3.01183&
3.01059&
3.01020&
3.00911&
3.00808\\
\\
\begin{tikzpicture}[scale=\sc, thick,main node/.style={circle,  minimum size=1.5mm, fill=black, inner sep=0.1mm,draw,font=\tiny\sffamily}] \placev \placecommone
 \path (1) edge (41) (1) edge (42) (2) edge (41) (3) edge (42)  (1) edge (51) (2) edge (52) (3) edge (51) (42) edge (52);
\end{tikzpicture}
& 
\begin{tikzpicture}[scale=\sc, thick,main node/.style={circle,  minimum size=1.5mm, fill=black, inner sep=0.1mm,draw,font=\tiny\sffamily}] \placev \placecommone
 \path (1) edge (41) (1) edge (42) (2) edge (41) (3) edge (42)  (1) edge (51) (2) edge (52) (3) edge (51) (41) edge (52);
\end{tikzpicture}
& 
\begin{tikzpicture}[scale=\sc, thick,main node/.style={circle,  minimum size=1.5mm, fill=black, inner sep=0.1mm,draw,font=\tiny\sffamily}] \placev \placecommone
 \path (1) edge (41) (1) edge (42) (2) edge (41) (3) edge (42)  (1) edge (52) (2) edge (51) (3) edge (51) (41) edge (52);
\end{tikzpicture}
& 
\begin{tikzpicture}[scale=\sc, thick,main node/.style={circle,  minimum size=1.5mm, fill=black, inner sep=0.1mm,draw,font=\tiny\sffamily}] \placev \placecommone
 \path (1) edge (41) (1) edge (42) (2) edge (41) (3) edge (42)  (1) edge (51) (1) edge (52)  (2) edge (52) (3) edge (51) (41) edge (51);
\end{tikzpicture}
& 
\begin{tikzpicture}[scale=\sc, thick,main node/.style={circle,  minimum size=1.5mm, fill=black, inner sep=0.1mm,draw,font=\tiny\sffamily}] \placev \placecommone
 \path (1) edge (41) (1) edge (42) (2) edge (41) (3) edge (42)  (1) edge (52)   (2) edge (51) (2) edge (52) (3) edge (51) (42) edge (51);
\end{tikzpicture}
& 
 \begin{tikzpicture}[scale=\sc, thick,main node/.style={circle,  minimum size=1.5mm, fill=black, inner sep=0.1mm,draw,font=\tiny\sffamily}] \placev \placecommone
 \path (1) edge (41) (1) edge (42) (2) edge (41) (3) edge (42)  (1) edge (52)   (2) edge (51) (2) edge (52) (3) edge (51) (41) edge (51);
\end{tikzpicture}\\
3.00709&
3.00688&
3.00682&
3.00512&
3.00493&
3.00483
\\
\end{tabular}
\caption{The $13$ graphs $G \in \hat{\K}_{5,2}$ with $\omega(G) \leq 3$, and their corresponding optimal values of $\max\set{\bar{e}^{\top}x : x \in \LS_+^2(G)}$ according to CVX}\label{fighatK52}
\end{center}
\end{figure}
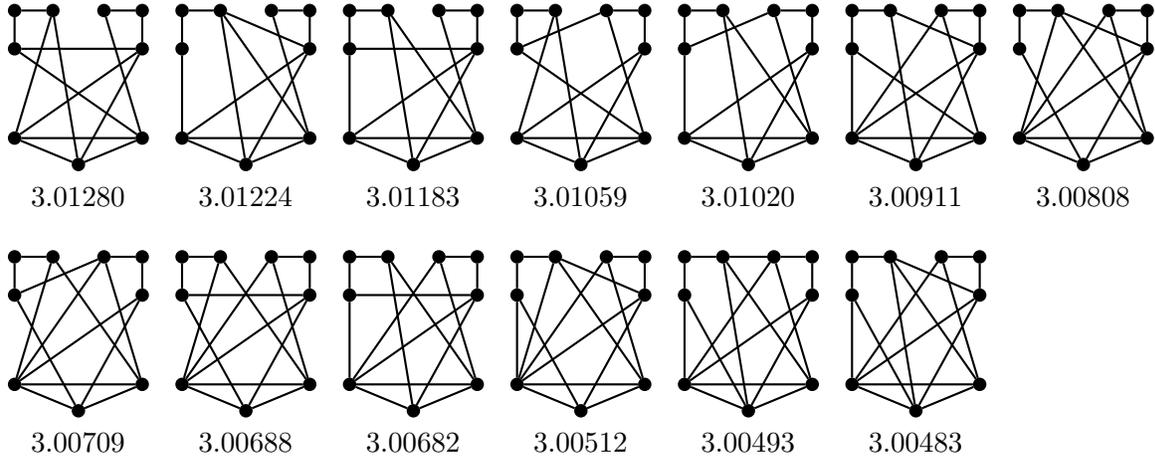

Next, given a graph $G$, we define the \emph{edge density} of $G$ to be $d(G) \ce \frac{|E(G)|}{\binom{ |V(G)| }{2}}$. It is known that the graphs on the two extremes of the edge density spectrum (i.e., the empty graph and the complete graph) both have low $\LS_+$-rank. This raises the natural question of finding the possible range of edge densities among $\ell$-minimal graphs (see~\cite[Problem 34]{AuT24b}). Given a positive integer $\ell$, let $d^+(\ell)$ (resp. $d^-(\ell)$) be the maximum (resp. minimum) edge density among $\ell$-minimal graphs. Our analysis of the graphs $\A_{k,S}$ above implies the following.

\begin{proposition}\label{prop505}
For every positive integer $\ell$, 
\[
d^-(\ell) \leq \frac{\ell^2+7\ell-2}{9\ell^2-3\ell} \quad 
\mbox{ and } \quad
d^+(\ell) \geq \frac{\ell^2+9\ell-4}{9\ell^2-3\ell}.
\]
\end{proposition}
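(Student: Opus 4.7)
The plan is to read off both bounds from the edge counts of the two extremal members of the family $\set{\A_{k,S} : S \subseteq \set{4,\ldots, k}}$ constructed above. Fix a positive integer $\ell$ and set $k \ce \ell+2$. Since every graph $\A_{k,S}$ belongs to $\hat{\K}_{k,k-3}$, has clique number $3$, and contains $3k-6 = 3\ell$ vertices, Corollary~\ref{cor501} (together with the discussion preceding Lemma~\ref{lem503}) guarantees that each $\A_{k,S}$ is $\ell$-minimal. Thus
\[
d^-(\ell) \leq d(\A_{k,\emptyset}), \qquad d^+(\ell) \geq d(\A_{k,\set{4,\ldots,k}}),
\]
and the proposition will follow by computing these two densities explicitly.

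First I would count $|E(\A_k)|$ directly from its defining edge set. The triangle on $\set{1,2,3}$ contributes $3$ edges; for each of the $k-3$ stretched indices $i \in \set{4,\ldots,k}$, the five edges $\set{i_0,i_1}, \set{i_0,i_2}, \set{i_1,2}, \set{i_1,3}, \set{i_2,1}$ contribute $5(k-3)$ edges; and the edges $\set{i_2,j_1}$ for $4 \le i < j \le k$ contribute $\binom{k-3}{2}$ further edges. Substituting $k = \ell+2$ and simplifying yields
\[
|E(\A_k)| = 3 + 5(\ell-1) + \binom{\ell-1}{2} = \frac{\ell^2 + 7\ell - 2}{2}.
\]
Dividing by $\binom{3\ell}{2} = \frac{9\ell^2 - 3\ell}{2}$ gives $d(\A_k) = \frac{\ell^2+7\ell-2}{9\ell^2-3\ell}$, which is the first inequality.

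For the second inequality I would repeat the count for $\A_{k,\set{4,\ldots,k\}}$, which differs from $\A_k$ only by the extra $k-3 = \ell-1$ edges $\set{i_2,2}$ for $i \in \set{4,\ldots, k}$. Thus
\[
|E(\A_{k,\set{4,\ldots,k\}})| = \frac{\ell^2+7\ell-2}{2} + (\ell-1) = \frac{\ell^2 + 9\ell - 4}{2},
\]
and dividing by $\binom{3\ell}{2}$ produces the claimed lower bound on $d^+(\ell)$. The argument is essentially bookkeeping; the only ``obstacle'' is being careful that $\A_{k,S}$ is genuinely $\ell$-minimal for both $S = \emptyset$ and $S = \set{4,\ldots,k}$, but this was already established via Corollary~\ref{cor501} (together with the observation that adding the edges $\set{i_2, 2\}$ preserves membership in $\hat{\K}_{k,k-3}$ and does not create a $K_4$, since $\set{1,3,4_2,\ldots,k_2}$ remains an independent neighbourhood of vertex $2$ aside from pairwise-non-adjacent wing vertices).
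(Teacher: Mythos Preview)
Your proposal is correct and follows essentially the same approach as the paper: both bounds are obtained by computing the edge densities of $\A_{\ell+2,\emptyset}$ and $\A_{\ell+2,\{4,\ldots,\ell+2\}}$, which are $\ell$-minimal by Corollary~\ref{cor501}. Your edge counts agree with the paper's formulas $\tfrac{1}{2}(k^2+3k-12)$ and $\tfrac{1}{2}(k^2+5k-18)$; the only quibble is that your parenthetical justification for $\omega(\A_{k,\{4,\ldots,k\}}) \le 3$ is slightly garbled (the neighbourhood of $2$ also contains the $i_1$'s, and $\{1,3\}$ is an edge), though the underlying claim is easily checked along the lines of the proof of Proposition~\ref{prop502}.
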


\begin{proof}
Observe that $\A_{k,\es}$ has $\frac{1}{2} (k^2+3k-12)$ edges, and so
\[
d^-(\ell) \leq \frac{ \frac{1}{2} ((\ell+2)^2+3(\ell+2)-12)}{ \binom{3 \ell}{2} } = \frac{\ell^2 + 7\ell -2}{9\ell^2-3\ell}.
\]
Likewise, the bound for $d^+(\ell)$ follows from the fact that $\A_{k, \set{4,\ldots,k}}$ contains $\frac{1}{2}(k^2+5k-18)$ edges.
\end{proof}

The bounds of $d^-(\ell), d^+(\ell)$ from Proposition~\ref{prop505} are tight for $\ell=1$ and $\ell=2$ (again, due to all $1$- and $2$-minimal graphs being known). The bound for $d^-(3)$ is also tight~\cite[Proposition 28]{AuT24b}. However, the bound for $d^+(\ell)$ above (which is based on an $\ell$-minimal graph in $\hat{\K}_{\ell+2, \ell-1}$) is likely not tight in general, as we show below that there does exist $\ell$-minimal graphs outside of $\hat{\K}_{\ell+2,\ell-1}$.

\begin{figure}[htbp]
\begin{center}

\def\sc{2}
\def\x{270 - 360/5}
\def\z{360/5}

\begin{tikzpicture}
[scale=\sc, thick,main node/.style={circle, minimum size=3.8mm, inner sep=0.1mm,draw,font=\tiny\sffamily}]
\node[main node] at ({cos(\x+(0)*\z)},{sin(\x+(0)*\z)}) (1) {$1$};
\node[main node] at ({cos(\x+(1)*\z)},{sin(\x+(1)*\z)}) (2) {$2$};
\node[main node] at ({cos(\x+(2)*\z)},{sin(\x+(2)*\z)}) (3) {$3$};

\node[main node] at ({ \y* cos(\x+(3)*\z) + (1-\y)*cos(\x+(2)*\z)},{ \y* sin(\x+(3)*\z) + (1-\y)*sin(\x+(2)*\z)}) (4) {$4_1$};
\node[main node] at ({cos(\x+(3)*\z)},{sin(\x+(3)*\z)}) (5) {$4_0$};
\node[main node] at ({ \y* cos(\x+(3)*\z) + (1-\y)*cos(\x+(4)*\z)},{ \y* sin(\x+(3)*\z) + (1-\y)*sin(\x+(4)*\z)}) (6) {$4_2$};

\node[main node] at ({ \y* cos(\x+(4)*\z) + (1-\y)*cos(\x+(3)*\z)},{ \y* sin(\x+(4)*\z) + (1-\y)*sin(\x+(3)*\z)}) (7) {$5_1$};
\node[main node] at ({cos(\x+(4)*\z)},{sin(\x+(4)*\z)}) (8) {$5_0$};
\node[main node] at ({ \y* cos(\x+(4)*\z) + (1-\y)*cos(\x+(5)*\z)},{ \y* sin(\x+(4)*\z) + (1-\y)*sin(\x+(5)*\z)}) (9) {$5_2$};

 \path[every node/.style={font=\sffamily}]
(1) edge (2)
(2) edge (3)
(1) edge (3)
(4) edge (5)
(5) edge (6)
(7) edge (8)
(8) edge (9)
(4) edge (3)
(6) edge (1)
(6) edge (2)
(7) edge (2)
(7) edge (3)
(9) edge (1)
(9) edge (2)
(7) edge (6)
(7) edge (4);
\end{tikzpicture}

\caption{A $3$-minimal graph which does not belong to $\hat{\K}_{5,2}$}\label{fig3mindense}
\end{center}
\end{figure}
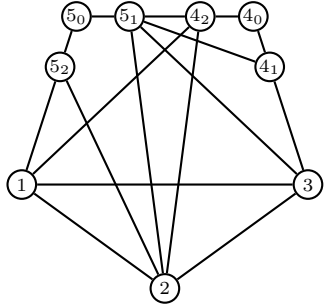

\begin{proposition}\label{prop506}
The graph in Figure~\ref{fig3mindense} is $3$-minimal.
\end{proposition}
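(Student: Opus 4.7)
The plan is to combine a straightforward upper bound with a case-by-case lower-bound argument using the framework of Section~\ref{sec04}. Since $|V(G)|=9$, Theorem~\ref{thm101} gives $r_+(G)\le 3$; the burden is on showing $r_+(G)\ge 3$, and the target facet is $\bar{e}^{\top}x\le 3$. A direct inspection of the wing neighborhoods, namely $\tilde{\Gamma}_G(4_1)=\{3,5\}$, $\tilde{\Gamma}_G(4_2)=\{1,2,5\}$, $\tilde{\Gamma}_G(5_1)=\{2,3,4\}$, and $\tilde{\Gamma}_G(5_2)=\{1,2\}$, shows $G\in\tilde{\K}_{5,2}$, so Lemma~\ref{lem304} confirms this inequality is facet-inducing for $\STAB(G)$. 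Note however that $G\notin\hat{\K}_{5,2}$, since both $4_1$ and $4_2$ are adjacent to $5_1$, violating~\eqref{eqhatKnd} for $i=4, j=5$; hence Corollary~\ref{cor501} is inapplicable and a bespoke argument is required.

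The strategy is to show $v(G,\epsilon)\in\cone(\LS_+^2(G))$ for some $\epsilon>0$ by invoking Lemma~\ref{lem403} with $k=1$. The hub-destruction hypotheses are easy: for each $i\in D(G)=\{4,5\}$ the graph $G \ominus i_0$ is a six-vertex member of $\tilde{\K}_{4,1}\cap\hat{\K}_{4,1}$ with clique number $3$, so Proposition~\ref{prop408} immediately yields $v(G\ominus i_0,\epsilon)\in\cone(\LS_+^1(G\ominus i_0))$.

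The three unstretched-vertex deletions split into two flavors. The graph $G-2$ still lies in $\tilde{\K}_{4,2}$ (verified by recomputing its $\tilde{\Gamma}$-sets), so Lemma~\ref{lem403} can be applied recursively with $k=0$; its four subgraph hypotheses reduce to checking that certain vectors lie in $\cone(\FRAC(\cdot))$, which is a routine edge-inequality computation on two $5$-cycles (from hub destructions) and two further graphs in $\K_{3,2}$. By contrast, $G-1$ and $G-3$ escape $\tilde{\K}_{4,2}$: in $G-1$ the wing $5_1$ is adjacent to vertices associated with every index in $\{2,3,4\}$, and symmetrically $4_2$ is adjacent to vertices associated with every index in $\{1,2,5\}$ in $G-3$. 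Lemma~\ref{lem305} then extracts a core stretched clique $H_i\in\tilde{\K}_{4,1}\cap\hat{\K}_{4,1}$ of clique number $3$ by removing the offending hub together with the opposite wing, so Proposition~\ref{prop408} supplies $v(H_i,\epsilon)\in\cone(\LS_+^1(H_i))$ for some $\epsilon>0$.

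The main obstacle is the final lifting: because $G-1, G-3 \notin \hat{\K}_{4,2}$, Lemma~\ref{lem407} is not available directly. The key observation to overcome this is that each of $G-1$ and $G-3$ is obtained from its core stretched clique $H_i$ by a single \emph{improper} $2$-stretching of the unstretched vertex that inherits the role of the destroyed index---explicitly, $G-1$ arises by $2$-stretching $5_1\in V(H_1)$ with $A_1=\Gamma_{H_1}(5_1)$ and $A_2=\{2\}$, and an analogous prescription handles $G-3$. Because Lemma~\ref{lem404} is stated for arbitrary (not necessarily proper) $2$-stretchings of unstretched vertices, it transfers the $\LS_+^1$-membership from $H_i$ to $G-i$; this, together with the earlier cases, fulfils the hypotheses of Lemma~\ref{lem403} and yields $v(G,\epsilon)\in\cone(\LS_+^2(G))$, completing the lower bound.
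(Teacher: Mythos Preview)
Your proof is correct and follows essentially the same route as the paper's: verify $G\in\tilde{\K}_{5,2}$, invoke Lemma~\ref{lem403} with $k=1$, and discharge each subgraph hypothesis via Proposition~\ref{prop408} together with Lemma~\ref{lem404}. The only substantive difference lies in the treatment of $G-2$. The paper observes directly that $G-2$ is a (proper) $2$-stretch of $G_{2,1}$ and applies Lemma~\ref{lem404} once; you instead exploit $G-2\in\tilde{\K}_{4,2}$ and apply Lemma~\ref{lem403} recursively with $k=0$, which obliges you to verify the $\FRAC$-level memberships for two $5$-cycles and two seven-vertex graphs in $\K_{3,2}$. Both arguments are valid, but the paper's is shorter because it handles all three deletions $G-1$, $G-2$, $G-3$ uniformly through Lemma~\ref{lem404}, whereas your split into ``$\tilde{\K}$ versus not $\tilde{\K}$'' forces the extra base-case bookkeeping for $G-2$.
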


\begin{proof}
Let $G$ be the graph in Figure~\ref{fig3mindense}, and we prove our claim using Lemma~\ref{lem403}. Notice that $G \in \tilde{\K}_{5,2}$ with $D(G) = \set{4,5}$. Next, $G \ominus 4_0$ is isomorphic to the $2$-minimal graph $G_{2,2}$ from Figure~\ref{figKnownEG}. Since $G_{2,2} \in \hat{\K}_{4,1}$ and $\omega(G_{2,2}) = 3$, it follows from Theorem~\ref{thm409} that $v(G_{2,2}, \epsilon) \in \cone(\LS_+(G_{2,2}))$ for some $\epsilon > 0$.  Thus, we know that $v(G \ominus 4_0, \epsilon) \in \cone(\LS_+(G \ominus 4_0))$ for some $\epsilon > 0$. Likewise, $G \ominus 5_0$ is isomorphic to $G_{2,1}$, and the same argument shows that $v(G \ominus 5_0, \epsilon) \in \cone(\LS_+(G \ominus 5_0))$ for some $\epsilon > 0$.

\def\sc{1.5}
\def\x{270 - 360/5}
\def\z{360/5}

\begin{figure}[htbp]
\begin{center}
\begin{tabular}{cc}

\begin{tikzpicture}
[scale=\sc, thick,main node/.style={circle,  minimum size=1.5mm, fill=black, inner sep=0.1mm,draw,font=\tiny\sffamily}]
\node[main node] at ({cos(\x+(0)*\z)},{sin(\x+(0)*\z)}) (1) {};
\node[main node] at ({cos(\x+(2)*\z)},{sin(\x+(2)*\z)}) (3) {};

\node[main node] at ({cos(\x+(3)*\z)},{sin(\x+(3)*\z)}) (5) {};

\node[main node] at ({ \y* cos(\x+(4)*\z) + (1-\y)*cos(\x+(3)*\z)},{ \y* sin(\x+(4)*\z) + (1-\y)*sin(\x+(3)*\z)}) (7) {};
\node[main node] at ({cos(\x+(4)*\z)},{sin(\x+(4)*\z)}) (8) {};
\node[main node] at ({ \y* cos(\x+(4)*\z) + (1-\y)*cos(\x+(5)*\z)},{ \y* sin(\x+(4)*\z) + (1-\y)*sin(\x+(5)*\z)}) (9) {};

 \path[every node/.style={font=\sffamily}]
(1) edge (3)
(7) edge (8)
(8) edge (9)
(5) edge (3)
(5) edge (1)
(7) edge (3)
(9) edge (1)
(7) edge (5);
\end{tikzpicture}
&

\begin{tikzpicture}
[scale=\sc, thick,main node/.style={circle,  minimum size=1.5mm, fill=black, inner sep=0.1mm,draw,font=\tiny\sffamily}]
\node[main node] at ({cos(\x+(0)*\z)},{sin(\x+(0)*\z)}) (1) {};
\node[main node] at ({cos(\x+(2)*\z)},{sin(\x+(2)*\z)}) (3) {};

\node[main node] at ({ \y* cos(\x+(3)*\z) + (1-\y)*cos(\x+(2)*\z)},{ \y* sin(\x+(3)*\z) + (1-\y)*sin(\x+(2)*\z)}) (4) {};
\node[main node] at ({cos(\x+(3)*\z)},{sin(\x+(3)*\z)}) (5) {};
\node[main node] at ({ \y* cos(\x+(3)*\z) + (1-\y)*cos(\x+(4)*\z)},{ \y* sin(\x+(3)*\z) + (1-\y)*sin(\x+(4)*\z)}) (6) {};

\node[main node] at ({ \y* cos(\x+(4)*\z) + (1-\y)*cos(\x+(3)*\z)},{ \y* sin(\x+(4)*\z) + (1-\y)*sin(\x+(3)*\z)}) (7) {};
\node[main node] at ({cos(\x+(4)*\z)},{sin(\x+(4)*\z)}) (8) {};
\node[main node] at ({ \y* cos(\x+(4)*\z) + (1-\y)*cos(\x+(5)*\z)},{ \y* sin(\x+(4)*\z) + (1-\y)*sin(\x+(5)*\z)}) (9) {};

 \path[every node/.style={font=\sffamily}]
(1) edge (3)
(4) edge (5)
(5) edge (6)
(7) edge (8)
(8) edge (9)
(4) edge (3)
(6) edge (1)
(7) edge (3)
(9) edge (1)
(7) edge (6)
(7) edge (4);
\end{tikzpicture}
\\
$G_{2,1}$ & $G - 2$
\end{tabular}
\caption{Illustrating the proof of Proposition~\ref{prop506}}\label{fig3mindenseproof}
\end{center}
\end{figure}
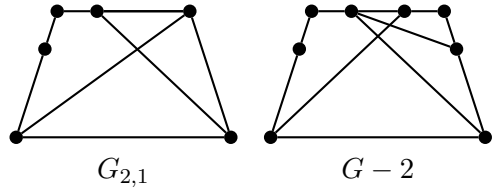

Next, observe that we can $2$-stretch a vertex in $G_{2,1}$ to obtain a graph isomorphic to $G - 2$ (see Figure~\ref{fig3mindenseproof}). Thus, Lemma~\ref{lem404} implies that $v(G-2, \epsilon) \in \cone(\LS_+(G-2))$ for some $\epsilon > 0$. Similarly, observe that $G-1$ and $G-3$ can be obtained by stretching a vertex in $G_{2,1}$ and $G_{2,2}$, respectively. Thus, using the same rationale as above, we conclude that there exists $\epsilon > 0$ where $v(G - i, \epsilon) \in \cone(\LS_+(G-i))$ for every $i \in [n] \setminus D(G)$.

Hence, Lemma~\ref{lem403} applies, and we conclude that $v(G,\epsilon) \in \cone(\LS_+^2(G))$ for some $\epsilon > 0$, which implies that $r_+(G) \geq 3$. Finally, $r_+(G) \leq 3$ follows from $|V(G)| =9$ and Theorem~\ref{thm101}, and this finishes the proof.
\end{proof}

The graph in Figure~\ref{fig3mindense} provides what we believe is the first example of an $\ell$-minimal graph that does not belong to $\hat{\K}_{\ell+2, \ell-1}$. Moreover, it is  not the only such graph. Figure~\ref{figK52} lists the $25$ non-isomorphic graphs in $\K_{5,2} \setminus \hat{\K}_{5,2}$ with clique number at most $3$, as well as their corresponding optimal value of $\max\set{ \bar{e}^{\top}x : x \in \LS_+^2(G)}$ computed in CVX. The computational results suggest that $18$ of these graphs are indeed $3$-minimal, which was subsequently verified in~\cite{AuT26}. On the other hand, the remaining $7$ graphs have an optimal value that is very close to $3$, which seems to indicate that the facet-inducing inequality $\bar{e}^{\top}x \leq 3$ has $\LS_+$-rank $2$ in those cases.

\def\x{270 - 135}
\def\z{360/4}
\def\y{0.7}
\def\sc{1.2}

\def\placev{\node[main node] at ({cos(\x+(1)*\z)},{sin(\x+(1)*\z)}) (1) {}; \node[main node] at ({cos(\x+(1.5)*\z)},{sin(\x+(1.5)*\z)}) (2) {}; \node[main node] at ({cos(\x+(2)*\z)},{sin(\x+(2)*\z)}) (3) {}; \node[main node] at ({ \y* cos(\x+(3)*\z) + (1-\y)*cos(\x+(2)*\z)},{ \y* sin(\x+(3)*\z) + (1-\y)*sin(\x+(2)*\z)}) (41) {}; \node[main node] at ({cos(\x+(3)*\z)},{sin(\x+(3)*\z)}) (40) {}; \node[main node] at ({ \y* cos(\x+(3)*\z) + (1-\y)*cos(\x+(4)*\z)},{ \y* sin(\x+(3)*\z) + (1-\y)*sin(\x+(4)*\z)}) (42) {}; \node[main node] at ({ \y* cos(\x+(4)*\z) + (1-\y)*cos(\x+(3)*\z)},{ \y* sin(\x+(4)*\z) + (1-\y)*sin(\x+(3)*\z)}) (51) {}; \node[main node] at ({cos(\x+(4)*\z)},{sin(\x+(4)*\z)}) (50) {}; \node[main node] at ({ \y* cos(\x+(4)*\z) + (1-\y)*cos(\x+(5)*\z)},{ \y* sin(\x+(4)*\z) + (1-\y)*sin(\x+(5)*\z)}) (52) {};}

\def\placecommone{  \path (1) edge (2) (2) edge (3) (3) edge (1) (40) edge (41) (40) edge (42) (50) edge (51) (50) edge (52);}

\begin{figure}[htbp]
\begin{center}
\begin{tabular}{ccccccc}
\begin{tikzpicture}[scale=\sc, thick,main node/.style={circle,  minimum size=1.5mm, fill=black, inner sep=0.1mm,draw,font=\tiny\sffamily}] \placev \placecommone
 \path (1) edge (41) (2) edge (41) (3) edge (42)  (1) edge (51) (2) edge (51) (3) edge (52) (41) edge (52) (42) edge (52);
\end{tikzpicture}
& 
\begin{tikzpicture}[scale=\sc, thick,main node/.style={circle,  minimum size=1.5mm, fill=black, inner sep=0.1mm,draw,font=\tiny\sffamily}] \placev \placecommone
 \path (1) edge (41) (2) edge (41) (3) edge (42)  (1) edge (52) (2) edge (51) (3) edge (51) (41) edge (52) (42) edge (52);
\end{tikzpicture}
& 
\begin{tikzpicture}[scale=\sc, thick,main node/.style={circle,  minimum size=1.5mm, fill=black, inner sep=0.1mm,draw,font=\tiny\sffamily}] \placev \placecommone
 \path (1) edge (41)  (2) edge (41) (3) edge (42)  (1) edge (52) (2) edge (51) (3) edge (51) (41) edge (52) (42) edge (51);
\end{tikzpicture}
& 
\begin{tikzpicture}[scale=\sc, thick,main node/.style={circle,  minimum size=1.5mm, fill=black, inner sep=0.1mm,draw,font=\tiny\sffamily}] \placev \placecommone
 \path (1) edge (41)  (2) edge (41) (3) edge (42)  (1) edge (52) (2) edge (51) (3) edge (51) (41) edge (51) (41) edge (52) (42) edge (51);
\end{tikzpicture}
& 
\begin{tikzpicture}[scale=\sc, thick,main node/.style={circle,  minimum size=1.5mm, fill=black, inner sep=0.1mm,draw,font=\tiny\sffamily}] \placev \placecommone
 \path (1) edge (41) (2) edge (41) (3) edge (42)  (1) edge (52) (2) edge (51) (3) edge (51) (41) edge (51) (42) edge (52);
\end{tikzpicture}
& 
\begin{tikzpicture}[scale=\sc, thick,main node/.style={circle,  minimum size=1.5mm, fill=black, inner sep=0.1mm,draw,font=\tiny\sffamily}] \placev \placecommone
 \path (1) edge (41) (2) edge (41) (3) edge (42)  (1) edge (52) (2) edge (51) (3) edge (51) (41) edge (51) (41) edge (52) (42) edge (52);
\end{tikzpicture}
&
\begin{tikzpicture}[scale=\sc, thick,main node/.style={circle,  minimum size=1.5mm, fill=black, inner sep=0.1mm,draw,font=\tiny\sffamily}] \placev \placecommone
 \path (1) edge (41) (2) edge (41) (3) edge (42)  (1) edge (52) (2) edge (51) (3) edge (51) (41) edge (52) (42) edge (51) (42) edge (52);
\end{tikzpicture}
\\
3.01029&
3.00971& 
3.00897 &
3.00896 &
3.00871 &
3.00868 &
3.00863\\
\\
\begin{tikzpicture}[scale=\sc, thick,main node/.style={circle,  minimum size=1.5mm, fill=black, inner sep=0.1mm,draw,font=\tiny\sffamily}] \placev \placecommone
 \path (1) edge (41) (2) edge (41) (3) edge (42)  (1) edge (52) (2) edge (51) (3) edge (51) (41) edge (51) (41) edge (52) (42) edge (51) (42) edge (52);
\end{tikzpicture}
&  
\begin{tikzpicture}[scale=\sc, thick,main node/.style={circle,  minimum size=1.5mm, fill=black, inner sep=0.1mm,draw,font=\tiny\sffamily}] \placev \placecommone
 \path (1) edge (41) (1) edge (42) (2) edge (41) (3) edge (42)  (1) edge (52) (2) edge (51) (3) edge (51) (41) edge (51) (41) edge (52);
\end{tikzpicture}
& 
\begin{tikzpicture}[scale=\sc, thick,main node/.style={circle,  minimum size=1.5mm, fill=black, inner sep=0.1mm,draw,font=\tiny\sffamily}] \placev \placecommone
 \path (1) edge (41) (1) edge (42) (2) edge (41) (3) edge (42)  (1) edge (51) (2) edge (52) (3) edge (51) (41) edge (52) (42) edge (52);
\end{tikzpicture}
& 
\begin{tikzpicture}[scale=\sc, thick,main node/.style={circle,  minimum size=1.5mm, fill=black, inner sep=0.1mm,draw,font=\tiny\sffamily}] \placev \placecommone
 \path (1) edge (41) (1) edge (42) (2) edge (41) (3) edge (42)  (1) edge (52) (2) edge (51) (3) edge (51) (41) edge (52) (42) edge (52);
\end{tikzpicture}
& 
\begin{tikzpicture}[scale=\sc, thick,main node/.style={circle,  minimum size=1.5mm, fill=black, inner sep=0.1mm,draw,font=\tiny\sffamily}] \placev \placecommone
 \path (1) edge (41) (1) edge (42) (2) edge (41) (3) edge (42)  (1) edge (52) (2) edge (51) (3) edge (51) (41) edge (51) (41) edge (52);
\end{tikzpicture}
& 
\begin{tikzpicture}[scale=\sc, thick,main node/.style={circle,  minimum size=1.5mm, fill=black, inner sep=0.1mm,draw,font=\tiny\sffamily}] \placev \placecommone
 \path (1) edge (41) (1) edge (42) (2) edge (41) (3) edge (42)  (1) edge (51) (2) edge (52) (3) edge (51) (41) edge (51) (41) edge (52);
\end{tikzpicture}
& 
\begin{tikzpicture}[scale=\sc, thick,main node/.style={circle,  minimum size=1.5mm, fill=black, inner sep=0.1mm,draw,font=\tiny\sffamily}] \placev \placecommone
 \path (1) edge (41) (1) edge (42) (2) edge (41) (3) edge (42)  (1) edge (52) (2) edge (51) (3) edge (51) (41) edge (51) (42) edge (52);
\end{tikzpicture}
\\
3.00863&
3.00727&
3.00657&
3.00635&
3.00627&
3.00615&
3.00605\\
\\
 \begin{tikzpicture}[scale=\sc, thick,main node/.style={circle,  minimum size=1.5mm, fill=black, inner sep=0.1mm,draw,font=\tiny\sffamily}] \placev \placecommone
 \path (1) edge (41) (1) edge (42) (2) edge (41) (3) edge (42)  (1) edge (52) (2) edge (51) (3) edge (51) (41) edge (51) (41) edge (52) (42) edge (51);
\end{tikzpicture}
&  
\begin{tikzpicture}[scale=\sc, thick,main node/.style={circle,  minimum size=1.5mm, fill=black, inner sep=0.1mm,draw,font=\tiny\sffamily}] \placev \placecommone
 \path (1) edge (41) (1) edge (42) (2) edge (41) (3) edge (42)  (1) edge (52) (2) edge (51) (3) edge (51) (41) edge (51) (41) edge (52) (42) edge (52);
\end{tikzpicture}
&  
\begin{tikzpicture}[scale=\sc, thick,main node/.style={circle,  minimum size=1.5mm, fill=black, inner sep=0.1mm,draw,font=\tiny\sffamily}] \placev \placecommone
 \path (1) edge (41) (1) edge (42) (2) edge (41) (3) edge (42)  (1) edge (52) (2) edge (51) (3) edge (51) (41) edge (51) (41) edge (52) (42) edge (51) (42) edge (52);
\end{tikzpicture}
&  
\begin{tikzpicture}[scale=\sc, thick,main node/.style={circle,  minimum size=1.5mm, fill=black, inner sep=0.1mm,draw,font=\tiny\sffamily}] \placev \placecommone
 \path (1) edge (41) (1) edge (42) (2) edge (41) (3) edge (42)  (1) edge (52)   (2) edge (51) (2) edge (52) (3) edge (51) (41) edge (51) (42) edge (51);
\end{tikzpicture}
& 
\begin{tikzpicture}[scale=\sc, thick,main node/.style={circle,  minimum size=1.5mm, fill=black, inner sep=0.1mm,draw,font=\tiny\sffamily}] \placev \placecommone
 \path (1) edge (41) (1) edge (42) (2) edge (41) (3) edge (42)  (1) edge (52)   (2) edge (51) (2) edge (52) (3) edge (51) (41) edge (51) (42) edge (52);
\end{tikzpicture}
& 
\begin{tikzpicture}[scale=\sc, thick,main node/.style={circle,  minimum size=1.5mm, fill=black, inner sep=0.1mm,draw,font=\tiny\sffamily}] \placev \placecommone
 \path (1) edge (41) (1) edge (42) (2) edge (41) (3) edge (42)  (1) edge (51) (1) edge (52)  (2) edge (52) (3) edge (51) (41) edge (51) (42) edge (52);
\end{tikzpicture}
& 
\begin{tikzpicture}[scale=\sc, thick,main node/.style={circle,  minimum size=1.5mm, fill=black, inner sep=0.1mm,draw,font=\tiny\sffamily}] \placev \placecommone
 \path (1) edge (41) (1) edge (42) (2) edge (41) (3) edge (42)  (1) edge (52)   (2) edge (51) (2) edge (52) (3) edge (51) (41) edge (51) (42) edge (51) (42) edge (52);
\end{tikzpicture}

\\
3.00577&
3.00571&
3.00560&
3.00483&
3.00000&
3.00000&
3.00000
\\
\\
\begin{tikzpicture}[scale=\sc, thick,main node/.style={circle,  minimum size=1.5mm, fill=black, inner sep=0.1mm,draw,font=\tiny\sffamily}] \placev \placecommone
 \path (1) edge (41) (1) edge (42) (2) edge (41) (3) edge (42)  (1) edge (51) (2) edge (52) (3) edge (51) (41) edge (51) (42) edge (52);
\end{tikzpicture}
& 
\begin{tikzpicture}[scale=\sc, thick,main node/.style={circle,  minimum size=1.5mm, fill=black, inner sep=0.1mm,draw,font=\tiny\sffamily}] \placev \placecommone
 \path (1) edge (41) (1) edge (42) (2) edge (41) (3) edge (42)  (1) edge (51) (2) edge (52) (3) edge (51) (41) edge (51) (41) edge (52) (42) edge (52);
\end{tikzpicture}
& 
 \begin{tikzpicture}[scale=\sc, thick,main node/.style={circle,  minimum size=1.5mm, fill=black, inner sep=0.1mm,draw,font=\tiny\sffamily}] \placev \placecommone
 \path (1) edge (41) (2) edge (41) (3) edge (42)  (1) edge (52)   (2) edge (51)  (3) edge (51) (41) edge (52) (42) edge (51) (42) edge (52);
\end{tikzpicture}
& 
 \begin{tikzpicture}[scale=\sc, thick,main node/.style={circle,  minimum size=1.5mm, fill=black, inner sep=0.1mm,draw,font=\tiny\sffamily}] \placev \placecommone
 \path (1) edge (41) (2) edge (41) (3) edge (42)  (1) edge (52)   (2) edge (51)  (3) edge (51) (41) edge (52) (42) edge (51);
\end{tikzpicture}
\\
3.00000 &
3.00000 &
3.00000 &
3.00000
\\
\end{tabular}
\caption{The $25$ graphs $G \in \K_{5,2} \setminus \hat{\K}_{5,2}$ with $\omega(G) \leq 3$, and their corresponding optimal values of $\max\set{\bar{e}^{\top}x : x \in \LS_+^2(G)}$ according to CVX}\label{figK52}
\end{center}
\end{figure}
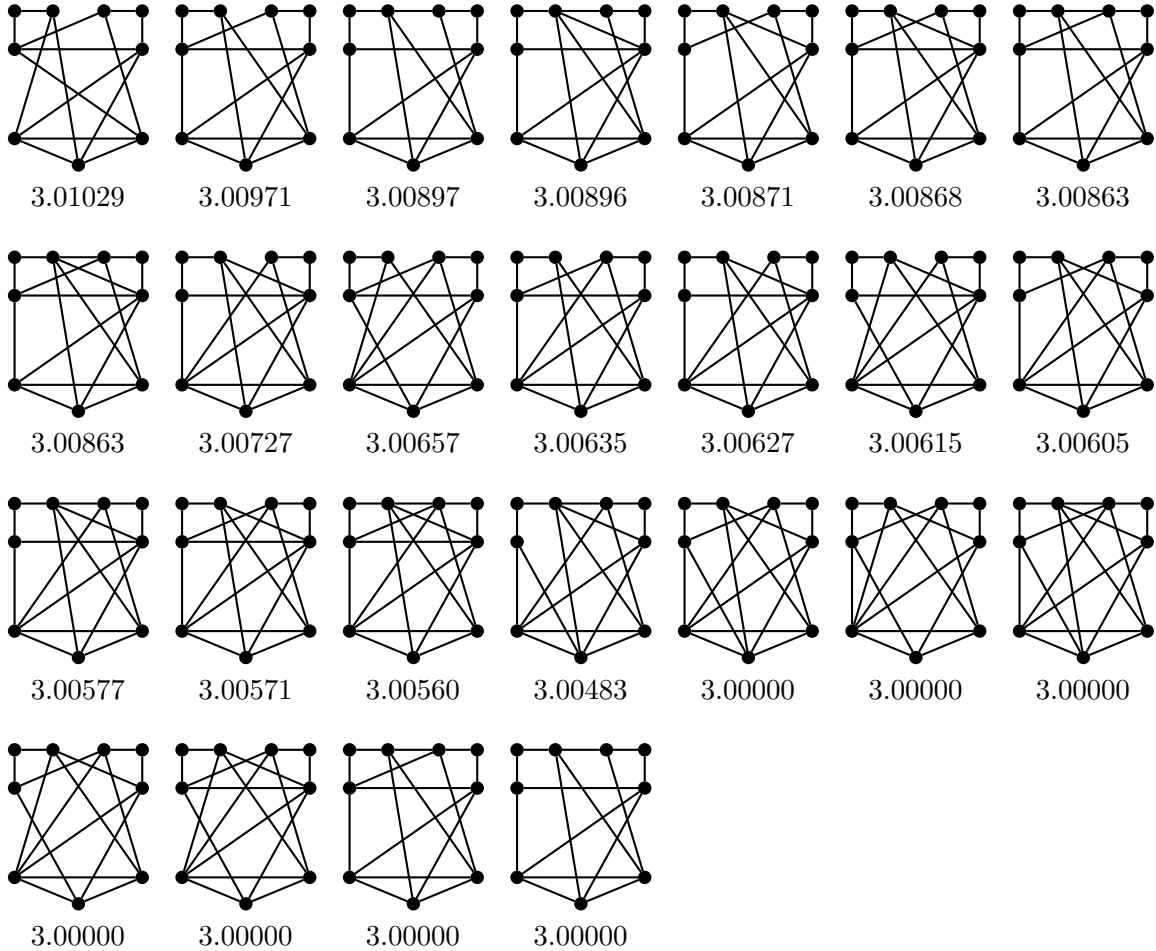

Finally, we finish this section by describing a family of relatively small vertex-transitive graphs with high $\LS_+$-rank. Given integers $a,b$ and $n \geq 1$, we define $a +_n b$ to be the unique integer $c \in [n]$ where $a+b-c$ is a multiple of $n$. (I.e., $a+_n b$ works similarly to addition modulo-$n$, except the operation outputs $n$ instead of $0$ when $a+b$ is divisible by $n$.) We also define $a -_n b$ analogously. Then, given an odd integer $k \geq 3$, we define the graph $\B_k$ where
\begin{align*}
V(\B_k) \ce{}& \set{ i_0, i_1, i_2, i_3 : i \in [k]}\\
E(\B_k) \ce{}& \set{ \set{i_0, i_1}, \set{i_1, i_2}, \set{i_2, i_3}, \set{i_3, i_0} : i \in [k]} \cup \\
& \set{ \set{i_0,j_2}, \set{i_1,j_3} : (j -_k i) \in \set{1,2,\ldots, \frac{k-1}{2}} }.
\end{align*}

Figure~\ref{figBk} illustrates the first few members of the family of graphs $\B_k$. Then we have the following.

\def\sc{2}
\def\w{1.1}
\def\y{15}

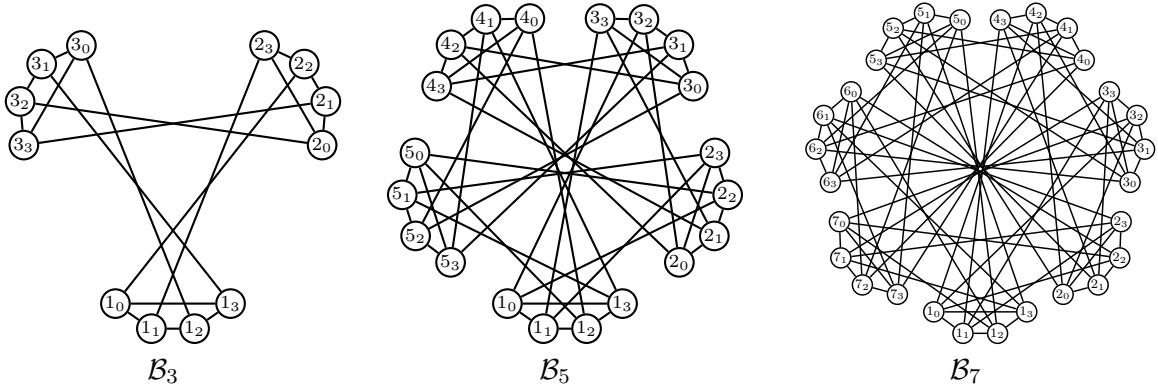
\begin{figure}[htbp]
\begin{center}
\begin{tabular}{ccc}

\def\x{120}
\def\z{270-1.5*\y}

\begin{tikzpicture}
[scale=\sc, thick,main node/.style={circle, minimum size=3.8mm, inner sep=0.1mm,draw,font=\tiny\sffamily}]

\node[main node] at ({cos(\z+0*\x+0*\y)}, {sin(\z+0*\x+0*\y)})  (10) {$1_0$};
\node[main node] at ({cos(\z+1*\x+0*\y)}, {sin(\z+1*\x+0*\y)})  (20) {$2_0$};
\node[main node] at ({cos(\z+2*\x+0*\y)}, {sin(\z+2*\x+0*\y)})  (30) {$3_0$};
\node[main node] at ({\w*cos(\z+0*\x+1*\y)}, {\w*sin(\z+0*\x+1*\y)})  (11) {$1_1$};
\node[main node] at ({\w*cos(\z+1*\x+1*\y)}, {\w*sin(\z+1*\x+1*\y)})  (21) {$2_1$};
\node[main node] at ({\w*cos(\z+2*\x+1*\y)}, {\w*sin(\z+2*\x+1*\y)})  (31) {$3_1$};
\node[main node] at ({\w*cos(\z+0*\x+2*\y)}, {\w*sin(\z+0*\x+2*\y)})  (12) {$1_2$};
\node[main node] at ({\w*cos(\z+1*\x+2*\y)}, {\w*sin(\z+1*\x+2*\y)})  (22) {$2_2$};
\node[main node] at ({\w*cos(\z+2*\x+2*\y)}, {\w*sin(\z+2*\x+2*\y)})  (32) {$3_2$};
\node[main node] at ({cos(\z+0*\x+3*\y)}, {sin(\z+0*\x+3*\y)})  (13) {$1_3$};
\node[main node] at ({cos(\z+1*\x+3*\y)}, {sin(\z+1*\x+3*\y)})  (23) {$2_3$};
\node[main node] at ({cos(\z+2*\x+3*\y)}, {sin(\z+2*\x+3*\y)})  (33) {$3_3$};

 \path[every node/.style={font=\sffamily}]
(10) edge (11)
(20) edge (21)
(30) edge (31)
(11) edge (12)
(21) edge (22)
(31) edge (32)
(12) edge (13)
(22) edge (23)
(32) edge (33)
(13) edge (10)
(23) edge (20)
(33) edge (30)
(10) edge (22)
(11) edge (23)
(20) edge (32)
(21) edge (33)
(30) edge (12)
(31) edge (13);
\end{tikzpicture}

&

\def\x{72}
\def\z{270-1.5*\y}

\begin{tikzpicture}
[scale=\sc, thick,main node/.style={circle, minimum size=3.8mm, inner sep=0.1mm,draw,font=\tiny\sffamily}]

\node[main node] at ({cos(\z+0*\x+0*\y)}, {sin(\z+0*\x+0*\y)})  (10) {$1_0$};
\node[main node] at ({cos(\z+1*\x+0*\y)}, {sin(\z+1*\x+0*\y)})  (20) {$2_0$};
\node[main node] at ({cos(\z+2*\x+0*\y)}, {sin(\z+2*\x+0*\y)})  (30) {$3_0$};
\node[main node] at ({cos(\z+3*\x+0*\y)}, {sin(\z+3*\x+0*\y)})  (40) {$4_0$};
\node[main node] at ({cos(\z+4*\x+0*\y)}, {sin(\z+4*\x+0*\y)})  (50) {$5_0$};
\node[main node] at ({\w*cos(\z+0*\x+1*\y)}, {\w*sin(\z+0*\x+1*\y)})  (11) {$1_1$};
\node[main node] at ({\w*cos(\z+1*\x+1*\y)}, {\w*sin(\z+1*\x+1*\y)})  (21) {$2_1$};
\node[main node] at ({\w*cos(\z+2*\x+1*\y)}, {\w*sin(\z+2*\x+1*\y)})  (31) {$3_1$};
\node[main node] at ({\w*cos(\z+3*\x+1*\y)}, {\w*sin(\z+3*\x+1*\y)})  (41) {$4_1$};
\node[main node] at ({\w*cos(\z+4*\x+1*\y)}, {\w*sin(\z+4*\x+1*\y)})  (51) {$5_1$};
\node[main node] at ({\w*cos(\z+0*\x+2*\y)}, {\w*sin(\z+0*\x+2*\y)})  (12) {$1_2$};
\node[main node] at ({\w*cos(\z+1*\x+2*\y)}, {\w*sin(\z+1*\x+2*\y)})  (22) {$2_2$};
\node[main node] at ({\w*cos(\z+2*\x+2*\y)}, {\w*sin(\z+2*\x+2*\y)})  (32) {$3_2$};
\node[main node] at ({\w*cos(\z+3*\x+2*\y)}, {\w*sin(\z+3*\x+2*\y)})  (42) {$4_2$};
\node[main node] at ({\w*cos(\z+4*\x+2*\y)}, {\w*sin(\z+4*\x+2*\y)})  (52) {$5_2$};
\node[main node] at ({cos(\z+0*\x+3*\y)}, {sin(\z+0*\x+3*\y)})  (13) {$1_3$};
\node[main node] at ({cos(\z+1*\x+3*\y)}, {sin(\z+1*\x+3*\y)})  (23) {$2_3$};
\node[main node] at ({cos(\z+2*\x+3*\y)}, {sin(\z+2*\x+3*\y)})  (33) {$3_3$};
\node[main node] at ({cos(\z+3*\x+3*\y)}, {sin(\z+3*\x+3*\y)})  (43) {$4_3$};
\node[main node] at ({cos(\z+4*\x+3*\y)}, {sin(\z+4*\x+3*\y)})  (53) {$5_3$};

 \path[every node/.style={font=\sffamily}]
(10) edge (11)
(20) edge (21)
(30) edge (31)
(40) edge (41)
(50) edge (51)
(11) edge (12)
(21) edge (22)
(31) edge (32)
(41) edge (42)
(51) edge (52)
(12) edge (13)
(22) edge (23)
(32) edge (33)
(42) edge (43)
(52) edge (53)
(13) edge (10)
(23) edge (20)
(33) edge (30)
(43) edge (40)
(53) edge (50)
(10) edge (22)
(10) edge (32)
(11) edge (23)
(11) edge (33)
(20) edge (32)
(20) edge (42)
(21) edge (33)
(21) edge (43)
(30) edge (42)
(30) edge (52)
(31) edge (43)
(31) edge (53)
(40) edge (52)
(40) edge (12)
(41) edge (53)
(41) edge (13)
(50) edge (12)
(50) edge (22)
(51) edge (13)
(51) edge (23);
\end{tikzpicture}

&

\def\x{360/7}
\def\z{270-1.5*\y}
\def\y{12}
\scalebox{0.7}{
\begin{tikzpicture}
[scale={\sc / 0.7}, thick,main node/.style={circle, minimum size=3.8mm, inner sep=0.1mm,draw,font=\tiny\sffamily}]

\node[main node] at ({cos(\z+0*\x+0*\y)}, {sin(\z+0*\x+0*\y)})  (10) {$1_0$};
\node[main node] at ({cos(\z+1*\x+0*\y)}, {sin(\z+1*\x+0*\y)})  (20) {$2_0$};
\node[main node] at ({cos(\z+2*\x+0*\y)}, {sin(\z+2*\x+0*\y)})  (30) {$3_0$};
\node[main node] at ({cos(\z+3*\x+0*\y)}, {sin(\z+3*\x+0*\y)})  (40) {$4_0$};
\node[main node] at ({cos(\z+4*\x+0*\y)}, {sin(\z+4*\x+0*\y)})  (50) {$5_0$};
\node[main node] at ({cos(\z+5*\x+0*\y)}, {sin(\z+5*\x+0*\y)})  (60) {$6_0$};
\node[main node] at ({cos(\z+6*\x+0*\y)}, {sin(\z+6*\x+0*\y)})  (70) {$7_0$};

\node[main node] at ({\w*cos(\z+0*\x+1*\y)}, {\w*sin(\z+0*\x+1*\y)})  (11) {$1_1$};
\node[main node] at ({\w*cos(\z+1*\x+1*\y)}, {\w*sin(\z+1*\x+1*\y)})  (21) {$2_1$};
\node[main node] at ({\w*cos(\z+2*\x+1*\y)}, {\w*sin(\z+2*\x+1*\y)})  (31) {$3_1$};
\node[main node] at ({\w*cos(\z+3*\x+1*\y)}, {\w*sin(\z+3*\x+1*\y)})  (41) {$4_1$};
\node[main node] at ({\w*cos(\z+4*\x+1*\y)}, {\w*sin(\z+4*\x+1*\y)})  (51) {$5_1$};
\node[main node] at ({\w*cos(\z+5*\x+1*\y)}, {\w*sin(\z+5*\x+1*\y)})  (61) {$6_1$};
\node[main node] at ({\w*cos(\z+6*\x+1*\y)}, {\w*sin(\z+6*\x+1*\y)})  (71) {$7_1$};

\node[main node] at ({\w*cos(\z+0*\x+2*\y)}, {\w*sin(\z+0*\x+2*\y)})  (12) {$1_2$};
\node[main node] at ({\w*cos(\z+1*\x+2*\y)}, {\w*sin(\z+1*\x+2*\y)})  (22) {$2_2$};
\node[main node] at ({\w*cos(\z+2*\x+2*\y)}, {\w*sin(\z+2*\x+2*\y)})  (32) {$3_2$};
\node[main node] at ({\w*cos(\z+3*\x+2*\y)}, {\w*sin(\z+3*\x+2*\y)})  (42) {$4_2$};
\node[main node] at ({\w*cos(\z+4*\x+2*\y)}, {\w*sin(\z+4*\x+2*\y)})  (52) {$5_2$};
\node[main node] at ({\w*cos(\z+5*\x+2*\y)}, {\w*sin(\z+5*\x+2*\y)})  (62) {$6_2$};
\node[main node] at ({\w*cos(\z+6*\x+2*\y)}, {\w*sin(\z+6*\x+2*\y)})  (72) {$7_2$};

\node[main node] at ({cos(\z+0*\x+3*\y)}, {sin(\z+0*\x+3*\y)})  (13) {$1_3$};
\node[main node] at ({cos(\z+1*\x+3*\y)}, {sin(\z+1*\x+3*\y)})  (23) {$2_3$};
\node[main node] at ({cos(\z+2*\x+3*\y)}, {sin(\z+2*\x+3*\y)})  (33) {$3_3$};
\node[main node] at ({cos(\z+3*\x+3*\y)}, {sin(\z+3*\x+3*\y)})  (43) {$4_3$};
\node[main node] at ({cos(\z+4*\x+3*\y)}, {sin(\z+4*\x+3*\y)})  (53) {$5_3$};
\node[main node] at ({cos(\z+5*\x+3*\y)}, {sin(\z+5*\x+3*\y)})  (63) {$6_3$};
\node[main node] at ({cos(\z+6*\x+3*\y)}, {sin(\z+6*\x+3*\y)})  (73) {$7_3$};

 \path[every node/.style={font=\sffamily}]
(10) edge (11)
(20) edge (21)
(30) edge (31)
(40) edge (41)
(50) edge (51)
(11) edge (12)
(21) edge (22)
(31) edge (32)
(41) edge (42)
(51) edge (52)
(12) edge (13)
(22) edge (23)
(32) edge (33)
(42) edge (43)
(52) edge (53)
(13) edge (10)
(23) edge (20)
(33) edge (30)
(43) edge (40)
(53) edge (50)
(60) edge (61)
(61) edge (62)
(62) edge (63)
(63) edge (60)
(70) edge (71)
(71) edge (72)
(72) edge (73)
(73) edge (70)
(10) edge (22)
(10) edge (32)
(10) edge (42)
(11) edge (23)
(11) edge (33)
(11) edge (43)
(20) edge (32)
(20) edge (42)
(20) edge (52)
(21) edge (33)
(21) edge (43)
(21) edge (53)
(30) edge (42)
(30) edge (52)
(30) edge (62)
(31) edge (43)
(31) edge (53)
(31) edge (63)
(40) edge (52)
(40) edge (62)
(40) edge (72)
(41) edge (53)
(41) edge (63)
(41) edge (73)
(50) edge (62)
(50) edge (72)
(50) edge (12)
(51) edge (63)
(51) edge (73)
(51) edge (13)
(60) edge (72)
(60) edge (12)
(60) edge (22)
(61) edge (73)
(61) edge (13)
(61) edge (23)
(70) edge (12)
(70) edge (22)
(70) edge (32)
(71) edge (13)
(71) edge (23)
(71) edge (33);
\end{tikzpicture}
}
\\
$\B_3$ & $\B_5$ & $\B_7$
\end{tabular}
\caption{Illustrating the graphs $\B_k$}\label{figBk}
\end{center}
\end{figure}

\begin{proposition}\label{prop508}
For every odd integer $k \geq 3$, $\B_k$ is vertex-transitive and $r_+(\B_k) \geq k-2$.
\end{proposition}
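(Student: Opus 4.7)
The plan has two parts: vertex-transitivity via explicit automorphisms, and the rank lower bound via an induced stretched-clique subgraph.

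For vertex-transitivity, I would exhibit three automorphisms of $\B_k$ and show the group they generate acts transitively. Take $c\colon i_j \mapsto (i +_k 1)_j$ (cyclic shift), $\mu$ exchanging $i_0 \leftrightarrow i_1$ and $i_2 \leftrightarrow i_3$ within each 4-cycle, and $\tau \colon i_j \mapsto (0 -_k i)_{(j+2) \bmod 4}$. The map $c$ preserves all edges because the cross-edge condition $(j -_k i) \in D$ (with $D \ce \set{1, \ldots, (k-1)/2}$) is shift-invariant. The map $\mu$ permutes the four 4-cycle edges of each group among themselves and swaps the cross-edge families $\set{i_0, j_2}$ and $\set{i_1, j_3}$. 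For $\tau$, rotation of the 4-cycle by two positions is an automorphism of $C_4$, and one verifies $(0 -_k i) -_k (0 -_k j) \equiv j -_k i \pmod{k}$, so cross edges are preserved. Starting from $1_0$: $c$ reaches every $i_0$; composing with $\mu$ reaches every $i_1$; composing with $\tau$ reaches every $i_2$; composing with $\mu \tau$ reaches every $i_3$. Hence the orbit is all of $V(\B_k)$.

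For the rank lower bound, I would consider the induced subgraph $H \ce \B_k - \set{i_3 : i \in [k]}$, which has $3k$ vertices. After removing the level-$3$ vertices, each $i_1$ has degree $2$ in $H$ and is adjacent only to $i_0$ and $i_2$. Under the relabeling $\hat{i}_0 \ce i_1$, $\hat{i}_1 \ce i_0$, $\hat{i}_2 \ce i_2$, each $\hat{i}_0$ becomes the hub of a $2$-stretching of vertex $i$ of $K_k$ with wings $\hat{i}_1, \hat{i}_2$, and the remaining edges of $H$ are precisely the cross edges $\set{i_0, j_2} = \set{\hat{i}_1, \hat{j}_2}$ with $(j -_k i) \in D$. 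Since $k$ is odd, the sets $D$ and $\set{0 -_k t : t \in D}$ partition $[k-1]$. Consequently, for each unordered pair $\set{i,j}$ of distinct indices in $[k]$, exactly one of $\set{\hat{i}_1, \hat{j}_2}$ or $\set{\hat{i}_2, \hat{j}_1}$ is an edge of $H$; this verifies the $\hat{\K}_{k,k}$ condition. Moreover $\omega(H) \leq 2$: each hub has non-adjacent wings (so no hub lies on a triangle), and the wing-wing edges form a bipartite graph between $\set{\hat{i}_1 : i \in [k]}$ and $\set{\hat{j}_2 : j \in [k]}$, which therefore contains no odd cycle. Theorem~\ref{thm409} applied to $H$ now yields $r_+(H) \geq k - \max\set{3, \omega(H)} + 1 = k - 2$. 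Since $H$ is an induced subgraph of $\B_k$ and the $\LS_+$-rank is monotone under taking induced subgraphs (as used, for instance, in the proof of Lemma~\ref{lem300}), we conclude $r_+(\B_k) \geq r_+(H) \geq k - 2$.

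The main technical step in this plan is verifying that $H$ really lies in $\hat{\K}_{k, k}$. This reduces to the partition property $D \cup \set{0 -_k t : t \in D} = [k-1]$, which fails for even $k$ (hence the hypothesis), together with the observation that the required stretching of $K_k$ is realized by choosing, for each $i \in [k]$, the sets $A_1^{(i)} \ce \set{i +_k t : t \in D}$ and $A_2^{(i)} \ce \set{i -_k t : t \in D}$ in the $2$-stretching operation; checking that the resulting stretched clique is isomorphic to $H$ is then straightforward. Once this structural identification is in place, the rank bound is an immediate application of Theorem~\ref{thm409}.
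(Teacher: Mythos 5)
Your proposal is correct and follows essentially the same route as the paper: both delete the level-$3$ vertices to obtain the $3k$-vertex induced subgraph $\B_k'$, identify it as a member of $\hat{\K}_{k,k}$ with clique number $2$, apply Theorem~\ref{thm409} to get $r_+(\B_k') \geq k-2$, and use monotonicity under induced subgraphs; the vertex-transitivity argument likewise uses the same three-generator scheme (your reflection $\tau$ differs superficially from the paper's $f_3$, but both work). Your write-up actually supplies more detail than the paper on verifying the $\hat{\K}_{k,k}$ membership and the triangle-freeness of $\B_k'$, which the paper only asserts.
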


\begin{proof}
First, let  $\B_k'$ denote the graph obtained from $\B_k$ by removing the vertices $\set{i_3 : i \in [k]}$. We see that $\B_k' \in \hat{\K}_{k,k}$, with $\omega(\B_k') = 2$. (In fact, for every $j \in \set{0,1,2}$, removing all vertices in $\set{ i_j : i \in [k]}$ from $\B_k$ also results in a graph isomorphic to $\B_k'$.) Hence, it follows from Theorem~\ref{thm409} that $r_+(\B_k') \geq k-2$. Since $\B_k'$ is an induced subgraph of $\B_k$, it follows that $r_+(\B_k) \geq k-2$.

\def\sc{2.1}
\def\w{1}
\def\y{15}

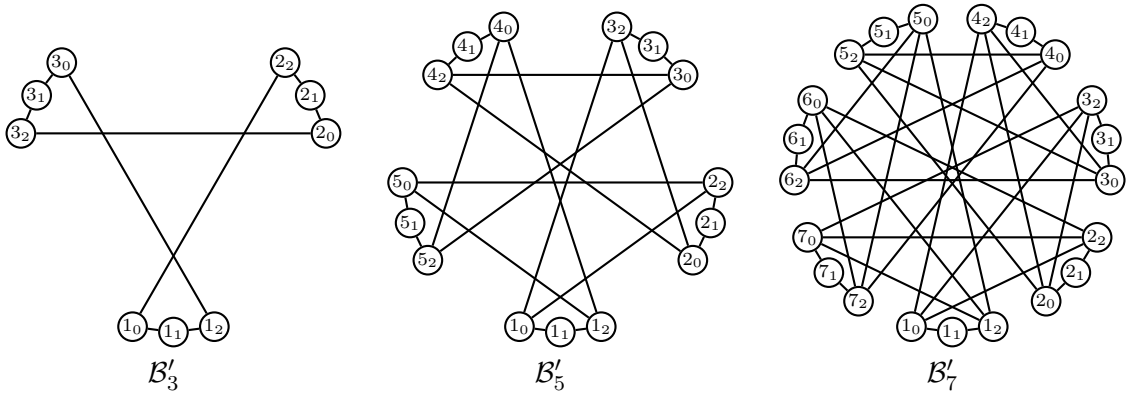
\begin{figure}[htbp]
\begin{center}
\begin{tabular}{ccc}

\def\x{120}
\def\z{270-1*\y}

\begin{tikzpicture}
[scale=\sc, thick,main node/.style={circle, minimum size=3.8mm, inner sep=0.1mm,draw,font=\tiny\sffamily}]

\node[main node] at ({cos(\z+0*\x+0*\y)}, {sin(\z+0*\x+0*\y)})  (10) {$1_0$};
\node[main node] at ({cos(\z+1*\x+0*\y)}, {sin(\z+1*\x+0*\y)})  (20) {$2_0$};
\node[main node] at ({cos(\z+2*\x+0*\y)}, {sin(\z+2*\x+0*\y)})  (30) {$3_0$};
\node[main node] at ({\w*cos(\z+0*\x+1*\y)}, {\w*sin(\z+0*\x+1*\y)})  (11) {$1_1$};
\node[main node] at ({\w*cos(\z+1*\x+1*\y)}, {\w*sin(\z+1*\x+1*\y)})  (21) {$2_1$};
\node[main node] at ({\w*cos(\z+2*\x+1*\y)}, {\w*sin(\z+2*\x+1*\y)})  (31) {$3_1$};
\node[main node] at ({\w*cos(\z+0*\x+2*\y)}, {\w*sin(\z+0*\x+2*\y)})  (12) {$1_2$};
\node[main node] at ({\w*cos(\z+1*\x+2*\y)}, {\w*sin(\z+1*\x+2*\y)})  (22) {$2_2$};
\node[main node] at ({\w*cos(\z+2*\x+2*\y)}, {\w*sin(\z+2*\x+2*\y)})  (32) {$3_2$};

 \path[every node/.style={font=\sffamily}]
(10) edge (11)
(20) edge (21)
(30) edge (31)
(11) edge (12)
(21) edge (22)
(31) edge (32)
(10) edge (22)
(20) edge (32)
(30) edge (12);
\end{tikzpicture}

&

\def\x{72}
\def\z{270-1*\y}

\begin{tikzpicture}
[scale=\sc, thick,main node/.style={circle, minimum size=3.8mm, inner sep=0.1mm,draw,font=\tiny\sffamily}]

\node[main node] at ({cos(\z+0*\x+0*\y)}, {sin(\z+0*\x+0*\y)})  (10) {$1_0$};
\node[main node] at ({cos(\z+1*\x+0*\y)}, {sin(\z+1*\x+0*\y)})  (20) {$2_0$};
\node[main node] at ({cos(\z+2*\x+0*\y)}, {sin(\z+2*\x+0*\y)})  (30) {$3_0$};
\node[main node] at ({cos(\z+3*\x+0*\y)}, {sin(\z+3*\x+0*\y)})  (40) {$4_0$};
\node[main node] at ({cos(\z+4*\x+0*\y)}, {sin(\z+4*\x+0*\y)})  (50) {$5_0$};
\node[main node] at ({\w*cos(\z+0*\x+1*\y)}, {\w*sin(\z+0*\x+1*\y)})  (11) {$1_1$};
\node[main node] at ({\w*cos(\z+1*\x+1*\y)}, {\w*sin(\z+1*\x+1*\y)})  (21) {$2_1$};
\node[main node] at ({\w*cos(\z+2*\x+1*\y)}, {\w*sin(\z+2*\x+1*\y)})  (31) {$3_1$};
\node[main node] at ({\w*cos(\z+3*\x+1*\y)}, {\w*sin(\z+3*\x+1*\y)})  (41) {$4_1$};
\node[main node] at ({\w*cos(\z+4*\x+1*\y)}, {\w*sin(\z+4*\x+1*\y)})  (51) {$5_1$};
\node[main node] at ({\w*cos(\z+0*\x+2*\y)}, {\w*sin(\z+0*\x+2*\y)})  (12) {$1_2$};
\node[main node] at ({\w*cos(\z+1*\x+2*\y)}, {\w*sin(\z+1*\x+2*\y)})  (22) {$2_2$};
\node[main node] at ({\w*cos(\z+2*\x+2*\y)}, {\w*sin(\z+2*\x+2*\y)})  (32) {$3_2$};
\node[main node] at ({\w*cos(\z+3*\x+2*\y)}, {\w*sin(\z+3*\x+2*\y)})  (42) {$4_2$};
\node[main node] at ({\w*cos(\z+4*\x+2*\y)}, {\w*sin(\z+4*\x+2*\y)})  (52) {$5_2$};

 \path[every node/.style={font=\sffamily}]
(10) edge (11)
(20) edge (21)
(30) edge (31)
(40) edge (41)
(50) edge (51)
(11) edge (12)
(21) edge (22)
(31) edge (32)
(41) edge (42)
(51) edge (52)
(10) edge (22)
(10) edge (32)
(20) edge (32)
(20) edge (42)
(30) edge (42)
(30) edge (52)
(40) edge (52)
(40) edge (12)
(50) edge (12)
(50) edge (22);
\end{tikzpicture}
&

\def\x{360/7}
\def\z{270-1*\y}

\begin{tikzpicture}
[scale=\sc, thick,main node/.style={circle, minimum size=3.8mm, inner sep=0.1mm,draw,font=\tiny\sffamily}]

\node[main node] at ({cos(\z+0*\x+0*\y)}, {sin(\z+0*\x+0*\y)})  (10) {$1_0$};
\node[main node] at ({cos(\z+1*\x+0*\y)}, {sin(\z+1*\x+0*\y)})  (20) {$2_0$};
\node[main node] at ({cos(\z+2*\x+0*\y)}, {sin(\z+2*\x+0*\y)})  (30) {$3_0$};
\node[main node] at ({cos(\z+3*\x+0*\y)}, {sin(\z+3*\x+0*\y)})  (40) {$4_0$};
\node[main node] at ({cos(\z+4*\x+0*\y)}, {sin(\z+4*\x+0*\y)})  (50) {$5_0$};
\node[main node] at ({cos(\z+5*\x+0*\y)}, {sin(\z+5*\x+0*\y)})  (60) {$6_0$};
\node[main node] at ({cos(\z+6*\x+0*\y)}, {sin(\z+6*\x+0*\y)})  (70) {$7_0$};

\node[main node] at ({\w*cos(\z+0*\x+1*\y)}, {\w*sin(\z+0*\x+1*\y)})  (11) {$1_1$};
\node[main node] at ({\w*cos(\z+1*\x+1*\y)}, {\w*sin(\z+1*\x+1*\y)})  (21) {$2_1$};
\node[main node] at ({\w*cos(\z+2*\x+1*\y)}, {\w*sin(\z+2*\x+1*\y)})  (31) {$3_1$};
\node[main node] at ({\w*cos(\z+3*\x+1*\y)}, {\w*sin(\z+3*\x+1*\y)})  (41) {$4_1$};
\node[main node] at ({\w*cos(\z+4*\x+1*\y)}, {\w*sin(\z+4*\x+1*\y)})  (51) {$5_1$};
\node[main node] at ({\w*cos(\z+5*\x+1*\y)}, {\w*sin(\z+5*\x+1*\y)})  (61) {$6_1$};
\node[main node] at ({\w*cos(\z+6*\x+1*\y)}, {\w*sin(\z+6*\x+1*\y)})  (71) {$7_1$};

\node[main node] at ({\w*cos(\z+0*\x+2*\y)}, {\w*sin(\z+0*\x+2*\y)})  (12) {$1_2$};
\node[main node] at ({\w*cos(\z+1*\x+2*\y)}, {\w*sin(\z+1*\x+2*\y)})  (22) {$2_2$};
\node[main node] at ({\w*cos(\z+2*\x+2*\y)}, {\w*sin(\z+2*\x+2*\y)})  (32) {$3_2$};
\node[main node] at ({\w*cos(\z+3*\x+2*\y)}, {\w*sin(\z+3*\x+2*\y)})  (42) {$4_2$};
\node[main node] at ({\w*cos(\z+4*\x+2*\y)}, {\w*sin(\z+4*\x+2*\y)})  (52) {$5_2$};
\node[main node] at ({\w*cos(\z+5*\x+2*\y)}, {\w*sin(\z+5*\x+2*\y)})  (62) {$6_2$};
\node[main node] at ({\w*cos(\z+6*\x+2*\y)}, {\w*sin(\z+6*\x+2*\y)})  (72) {$7_2$};

 \path[every node/.style={font=\sffamily}]
(10) edge (11)
(20) edge (21)
(30) edge (31)
(40) edge (41)
(50) edge (51)
(11) edge (12)
(21) edge (22)
(31) edge (32)
(41) edge (42)
(51) edge (52)
(60) edge (61)
(61) edge (62)
(70) edge (71)
(71) edge (72)
(10) edge (22)
(10) edge (32)
(10) edge (42)
(20) edge (32)
(20) edge (42)
(20) edge (52)
(30) edge (42)
(30) edge (52)
(30) edge (62)
(40) edge (52)
(40) edge (62)
(40) edge (72)
(50) edge (62)
(50) edge (72)
(50) edge (12)
(60) edge (72)
(60) edge (12)
(60) edge (22)
(70) edge (12)
(70) edge (22)
(70) edge (32);
\end{tikzpicture}
\\
$\B_3'$ & $\B_5'$ & $\B_7'$
\end{tabular}
\caption{Illustrating the graphs $\B_k'$ for the proof of Proposition~\ref{prop508} }\label{figBk'}
\end{center}
\end{figure}

It remains to show that $\B_k$ is vertex-transitive, and we do so via describing three automorphisms of $\B_k$. For every $i \in [k]$, define the functions $f_1, f_2, f_3 : V(\B_k) \to V(\B_k)$ as follows:
\[
\renewcommand*{\arraystretch}{1.1}
\begin{array}{c|cccc}
j & 0 & 1 & 2 & 3 \\
\hline
f_1(i_j) & i_1 & i_0 & i_3 & i_2 \\
f_2(i_j) & (i +_k 1)_0 & (i +_k 1)_1 & (i +_k 1)_2 & (i +_k 1)_3 \\
f_3(i_j) & (2 -_k i)_3 & (2 -_k i)_2 & (2 -_k i)_1 & (2 -_k i)_0 
\end{array}
\]
In all cases, one can show that if $\set{u,v}$ is an edge in $\B_k$, then so are $\set{f_1(u),f_1(v)}$, $\set{f_2(u),f_2(v)}$, and $\set{f_3(u),f_3(v)}$. Furthermore, for every distinct $u,v \in V(\B_k)$, there exists a composition of $f_1$, $f_2$, and $f_3$ which maps $u$ to $v$. Hence, we conclude that $\B_k$ is vertex-transitive.
\end{proof}

Proposition~\ref{prop508} readily implies the following:

\begin{theorem}\label{thm509}
For every positive integer $\ell$, there exists a vertex-transitive graph $G$ where $|V(G)| \leq 4\ell+12$ and $r_+(G) \geq \ell$.
\end{theorem}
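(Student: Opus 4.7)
The plan is to invoke Proposition~\ref{prop508} on an appropriately chosen member of the family $\set{\B_k : k \geq 3 \text{ odd}}$. Recall that Proposition~\ref{prop508} yields, for every odd $k \geq 3$, a vertex-transitive graph $\B_k$ on exactly $4k$ vertices with $r_+(\B_k) \geq k-2$. Thus it suffices to locate, for each positive integer $\ell$, an odd integer $k$ with $k - 2 \geq \ell$ and $4k \leq 4\ell + 12$, i.e., $\ell + 2 \leq k \leq \ell + 3$.

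First I would fix $\ell \geq 1$ and split into two cases based on the parity of $\ell$. If $\ell$ is odd, then $k \ce \ell + 2$ is odd and satisfies $k \geq 3$, so Proposition~\ref{prop508} gives $r_+(\B_k) \geq k - 2 = \ell$ and $|V(\B_k)| = 4k = 4\ell + 8 \leq 4\ell + 12$. If $\ell$ is even, then $k \ce \ell + 3$ is odd and at least $5 \geq 3$, and again Proposition~\ref{prop508} yields $r_+(\B_k) \geq k - 2 = \ell + 1 \geq \ell$ with $|V(\B_k)| = 4k = 4\ell + 12$. In both cases $G \ce \B_k$ witnesses the claim.

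There is no substantive obstacle here, as all the work has been concentrated in Proposition~\ref{prop508}; the only subtlety is the parity adjustment, which costs us up to one extra odd value of $k$ and hence up to $4$ additional vertices in the bound $4\ell + 12$ (versus the sharper bound $4\ell + 8$ available when $\ell$ is odd).
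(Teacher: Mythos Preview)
Your proof is correct and essentially identical to the paper's own argument: both split on the parity of $\ell$, take $\B_{\ell+2}$ when $\ell$ is odd and $\B_{\ell+3}$ when $\ell$ is even, and read off the vertex count and rank bound from Proposition~\ref{prop508}.
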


\begin{proof}
If $\ell$ is odd, then $\B_{\ell+2}$, which has $4\ell+8$ vertices, satisfies the desired conditions. If $\ell$ is even, then $\B_{\ell+3}$, which has $4\ell+12$ vertices, satisfies $r_+(G) \geq \ell$.
\end{proof}

\subsection{$\CG$-rank of stretched cliques}

Next, we comment on the hardness of $\STAB(G)$ for some stretched cliques $G$ with respect to another well-studied cutting-plane procedure, which is due to Chv{\'a}tal~\cite{Chvatal73} and Gomory~\cite{Gomory58}. Given a set $P \subseteq [0,1]^n$ and a valid inequality $a^{\top}x \leq \b$ for $P$, where $a \in \mZ^n$, we say that $a^{\top}x \leq \lfloor \b \rfloor$ is a \emph{Chv{\'a}tal--Gomory cut} of $P$. Observe that every Chv{\'a}tal--Gomory cut of $P$ is valid for $P_I$. Thus, if we define $\CG(P)$ to be the set of points which satisfy all Chv{\'a}tal--Gomory cuts for $P$, then we have $P_I \subseteq \CG(P) \subseteq P$. The set $\CG(P)$ is known as the \emph{Chv{\'a}tal--Gomory closure} of $P$, and is a closed convex set for every $P$.

As with $\LS_+$, we can also apply this cutting-plane procedure iteratively. Given an integer $\ell \geq 2$, we recursively define $\CG^{\ell}(P) \ce \CG\left( \CG^{\ell-1}(P) \right)$. We can then define the \emph{$\CG$-rank} of a valid inequality of $P_I$ (relative to $P$) to be the smallest integer $\ell$ for which it is valid for $\CG^{\ell}(P)$, and let the $\CG$-rank of a set $P$ to be the smallest integer $\ell$ where $\CG^{\ell}(P) = P_I$. For convenience and for consistency with our discussion on $\LS_+$, given a graph $G$, we will also write $\CG^{\ell}(G)$ instead of $\CG^{\ell}(\FRAC(G))$, and refer to the $\CG$-rank of $\FRAC(G)$ simply as the $\CG$-rank of $G$. A notable distinction between the procedures $\LS_+$ and $\CG$ is that optimizing a linear function over $\CG^{\ell}(P)$ is $\mathcal{N}\mathcal{P}$-hard in general, even for $\ell = O(1)$.

After establishing that stretched cliques can be the worst-case instances for $\LS_+$, we now show that there are families of stretched cliques with unbounded $\CG$-rank. To do so, let us consider a special family of graphs. Given an integer $k \geq 3$, we define the graph $H_k'$ where
\begin{align*}
V(H_k') \ce{}& \set{1,2} \cup  \set{ i_0, i_1, i_2 : i \in \set{3,\ldots,k}}\\
E(H_k') \ce{}& \set{\set{i_0, i_1}, \set{i_0, i_2},  \set{i_1, 2}, \set{i_2,1} : i \in \set{3,\ldots,k}} \cup \\
& \set{ \set{i_1,j_2}  : i,j \in \set{3,\ldots,k}, i \neq j}.
\end{align*}

\def\y{0.70}
\def\sc{2}
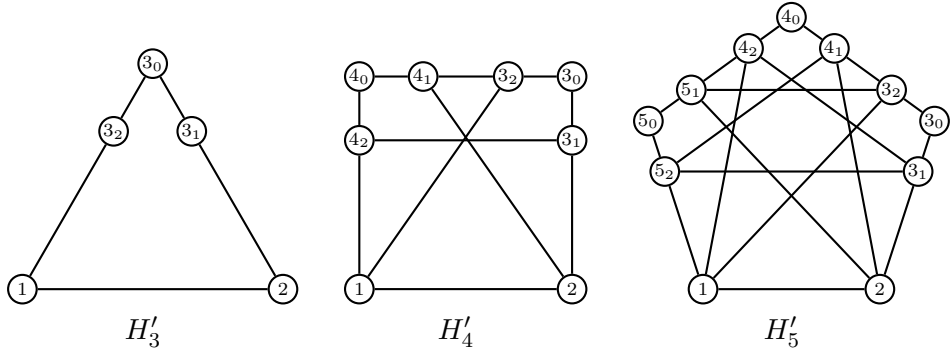
\begin{figure}[ht!]
\begin{center}
\begin{tabular}{ccc}

\def\x{270 - 180/3}
\def\z{360/3}
\begin{tikzpicture}[scale=\sc, thick,main node/.style={circle, minimum size=3.8mm, inner sep=0.1mm,draw,font=\tiny\sffamily}]
\node[main node] at ({cos(\x+(0)*\z)},{sin(\x+(0)*\z)}) (1) {$1$};
\node[main node] at ({cos(\x+(1)*\z)},{sin(\x+(1)*\z)}) (6) {$2$};

\node[main node] at ({ \y* cos(\x+(2)*\z) + (1-\y)*cos(\x+(1)*\z)},{ \y* sin(\x+(2)*\z) + (1-\y)*sin(\x+(1)*\z)}) (7) {$3_1$};
\node[main node] at ({cos(\x+(2)*\z)},{sin(\x+(2)*\z)}) (8) {$3_0$};
\node[main node] at ({ \y* cos(\x+(2)*\z) + (1-\y)*cos(\x+(3)*\z)},{ \y* sin(\x+(2)*\z) + (1-\y)*sin(\x+(3)*\z)}) (9) {$3_2$};

 \path[every node/.style={font=\sffamily}]
(8) edge (7)
(8) edge (9)
(1) edge (6)
(1) edge (9)
(7) edge (6);
\end{tikzpicture}

&

\def\x{270 - 180/4}
\def\z{360/4}

\begin{tikzpicture}[scale=\sc, thick,main node/.style={circle, minimum size=3.8mm, inner sep=0.1mm,draw,font=\tiny\sffamily}]

\node[main node] at ({cos(\x+(0)*\z)},{sin(\x+(0)*\z)}) (1) {$1$};
\node[main node] at ({cos(\x+(1)*\z)},{sin(\x+(1)*\z)}) (6) {$2$};

\node[main node] at ({ \y* cos(\x+(2)*\z) + (1-\y)*cos(\x+(1)*\z)},{ \y* sin(\x+(2)*\z) + (1-\y)*sin(\x+(1)*\z)}) (7) {$3_1$};
\node[main node] at ({cos(\x+(2)*\z)},{sin(\x+(2)*\z)}) (8) {$3_0$};
\node[main node] at ({ \y* cos(\x+(2)*\z) + (1-\y)*cos(\x+(3)*\z)},{ \y* sin(\x+(2)*\z) + (1-\y)*sin(\x+(3)*\z)}) (9) {$3_2$};

\node[main node] at ({ \y* cos(\x+(3)*\z) + (1-\y)*cos(\x+(2)*\z)},{ \y* sin(\x+(3)*\z) + (1-\y)*sin(\x+(2)*\z)}) (10) {$4_1$};
\node[main node] at ({cos(\x+(3)*\z)},{sin(\x+(3)*\z)}) (11) {$4_0$};
\node[main node] at ({ \y* cos(\x+(3)*\z) + (1-\y)*cos(\x+(4)*\z)},{ \y* sin(\x+(3)*\z) + (1-\y)*sin(\x+(4)*\z)}) (12) {$4_2$};

 \path[every node/.style={font=\sffamily}]
(8) edge (7)
(8) edge (9)
(11) edge (10)
(11) edge (12)
(1) edge (6)
(1) edge (9)
(1) edge (12)
(7) edge (6)
(7) edge (12)
(10) edge (6)
(10) edge (9);
\end{tikzpicture}

&

\def\x{270 - 180/5}
\def\z{360/5}

\begin{tikzpicture}[scale=\sc, thick,main node/.style={circle, minimum size=3.8mm, inner sep=0.1mm,draw,font=\tiny\sffamily}]
\node[main node] at ({cos(\x+(0)*\z)},{sin(\x+(0)*\z)}) (1) {$1$};
\node[main node] at ({cos(\x+(1)*\z)},{sin(\x+(1)*\z)}) (6) {$2$};

\node[main node] at ({ \y* cos(\x+(2)*\z) + (1-\y)*cos(\x+(1)*\z)},{ \y* sin(\x+(2)*\z) + (1-\y)*sin(\x+(1)*\z)}) (7) {$3_1$};
\node[main node] at ({cos(\x+(2)*\z)},{sin(\x+(2)*\z)}) (8) {$3_0$};
\node[main node] at ({ \y* cos(\x+(2)*\z) + (1-\y)*cos(\x+(3)*\z)},{ \y* sin(\x+(2)*\z) + (1-\y)*sin(\x+(3)*\z)}) (9) {$3_2$};

\node[main node] at ({ \y* cos(\x+(3)*\z) + (1-\y)*cos(\x+(2)*\z)},{ \y* sin(\x+(3)*\z) + (1-\y)*sin(\x+(2)*\z)}) (10) {$4_1$};
\node[main node] at ({cos(\x+(3)*\z)},{sin(\x+(3)*\z)}) (11) {$4_0$};
\node[main node] at ({ \y* cos(\x+(3)*\z) + (1-\y)*cos(\x+(4)*\z)},{ \y* sin(\x+(3)*\z) + (1-\y)*sin(\x+(4)*\z)}) (12) {$4_2$};

\node[main node] at ({ \y* cos(\x+(4)*\z) + (1-\y)*cos(\x+(3)*\z)},{ \y* sin(\x+(4)*\z) + (1-\y)*sin(\x+(3)*\z)}) (13) {$5_1$};
\node[main node] at ({cos(\x+(4)*\z)},{sin(\x+(4)*\z)}) (14) {$5_0$};
\node[main node] at ({ \y* cos(\x+(4)*\z) + (1-\y)*cos(\x+(5)*\z)},{ \y* sin(\x+(4)*\z) + (1-\y)*sin(\x+(5)*\z)}) (15) {$5_2$};

 \path[every node/.style={font=\sffamily}]
(8) edge (7)
(8) edge (9)
(11) edge (10)
(11) edge (12)
(14) edge (13)
(14) edge (15)
(1) edge (6)
(1) edge (9)
(1) edge (12)
(1) edge (15)
(7) edge (6)
(7) edge (12)
(7) edge (15)
(10) edge (6)
(10) edge (9)
(10) edge (15)
(13) edge (6)
(13) edge (9)
(13) edge (12);
\end{tikzpicture}
\\
$H_3'$ & $H_4'$ & $H_5'$ 
\end{tabular}
\caption{Several graphs in the family $H_k'$}\label{figH_k'}
\end{center}
\end{figure}

Figure~\ref{figH_k'} gives the drawings of $H_k'$ for $k \in \set{3,4,5}$. The authors recently studied the $\LS_+$-relaxations of $H_k'$ in~\cite{AuT24b} (where the graphs had slightly different vertex labels). These graphs are also closely related to the graphs $H_k$, which form the first known family of graphs $G$ for which $r_+(G)$ is asymptotically a linear function of $|V(G)|$~\cite{AuT24}.

Next, observe that $H_k' \in \tilde{\K}_{k, k-2}$ for every $k \geq 3$, and so it follows from Lemma~\ref{lem304} that $\bar{e}^{\top}x \leq k-1$ is a facet-inducing inequality for $\STAB(H_k')$. The following result is a consequence of~\cite[Proposition 29]{AuT24b} and~\cite[Theorem 29]{AuT24}.

\begin{proposition}\label{prop510}
For every $k \geq 3$, the facet-inducing inequality $\bar{e}^{\top}x \leq k-1$ for $\STAB(H_k')$ has $\LS_+$-rank at least $\frac{3k}{16}$, and has  $\CG$-rank at least $\log_4 \left( \frac{3k-7}{2} \right)$.
\end{proposition}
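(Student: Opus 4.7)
The plan is to derive each of the two bounds from one of the cited references, since the proposition is explicitly stated as a consequence of [AuT24b, Proposition 29] and [AuT24, Theorem 29]. No substantively new argument should be needed here beyond confirming that the present setup matches the setup in those sources.

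First I would verify that the rank inequality is indeed facet-inducing, which is immediate from the construction: $H_k' \in \tilde{\K}_{k,\, k-2}$, so Lemma~\ref{lem304} yields that $\bar{e}^{\top} x \leq k - 1$ is facet-inducing for $\STAB(H_k')$. Next, for the $\LS_+$-rank lower bound of $\frac{3k}{16}$, I would invoke [AuT24b, Proposition 29], which treats precisely this family. The underlying mechanism is (by analogy with Lemma~\ref{lem403}) an inductive construction of a feasible matrix $Y \in \widehat{\LS}_+^{\,t}(\FRAC(H_k'))$ with $t \approx \frac{3k}{16} - 1$ whose associated primal point violates $\bar{e}^{\top} x \leq k-1$; the constant $\frac{3}{16}$ arises from how efficiently a carefully chosen vector
\[
v(H_k', \epsilon) = u_{\bar{e}^{\top} x = k-1}(\FRAC(H_k')) - \epsilon \begin{bmatrix} 1 \\ \chi_{D_0} \end{bmatrix}
\]
can be transported between successive graphs in the family as one destroys hub vertices and deletes unstretched vertices.

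Then, for the $\CG$-rank lower bound, I would invoke [AuT24, Theorem 29], which establishes a $\CG$-rank lower bound of $\log_4\!\left(\frac{3k-7}{2}\right)$ for the related family $H_k$. Since $H_k$ and $H_k'$ are closely related (as noted in the paragraph preceding the proposition), the argument should transfer with only notational adjustments --- for instance, by realizing $H_k'$ as an induced subgraph in the setup used there, or by observing that the inductive $\CG$-cut elimination scheme invoked in [AuT24] uses only structural features (the recursive stretched-clique layering and the large stability number) that are shared by $H_k'$. The logarithmic shape of the bound is characteristic of such $\CG$-rank arguments: each round of Chv{\'a}tal--Gomory rounding can at best shrink the slack of the rank inequality by a bounded multiplicative factor (here, a factor of about $4$), and the iteration must be repeated until the slack drops below~$1$.

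The main obstacle, and really the only work to do, is the bookkeeping required to confirm that the graph $H_k'$ as labelled here matches (up to isomorphism) the families studied in those two references, so that the quoted bounds apply to the specific facet $\bar{e}^{\top} x \leq k-1$ and not to a structurally similar but formally distinct inequality. With the stretched-clique conventions of Section~\ref{sec03} and the facet-inducing status established above, this is a routine verification, after which the two bounds combine to give the proposition.
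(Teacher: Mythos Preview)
Your proposal is correct and matches the paper's treatment: the paper offers no proof for this proposition beyond the sentence ``The following result is a consequence of~\cite[Proposition 29]{AuT24b} and~\cite[Theorem 29]{AuT24}'', so your plan to invoke the two cited results and verify that $H_k'$ coincides with the families treated there is exactly what is intended. Your additional commentary on the underlying mechanisms goes beyond what the paper itself supplies; just be careful that your assignment of which citation yields which bound is checked against the actual references rather than inferred from context.
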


Next, we use the $\CG$-rank bound on $H_k'$ above to prove a $\CG$-rank lower bound on some stretched cliques.

\begin{proposition}\label{prop511}
Let $G \in \K_{n,d}$ where $n \geq d+2$ and $d \geq 1$. Furthermore, suppose that $\set{i_1, j_1}, \set{i_2, j_2} \not\in E(G)$ for all distinct $i,j \in D(G)$. Then the valid inequality $\bar{e}^{\top}x \leq d+1$ of $\STAB(G)$ has $\CG$-rank at least $\log_4 \left( \frac{3d-1}{2} \right)$.
\end{proposition}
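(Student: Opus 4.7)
The plan is to derive Proposition~\ref{prop511} from Proposition~\ref{prop510} applied with $k = d + 2$, which yields the CG-rank bound $\log_4\left(\frac{3(d+2) - 7}{2}\right) = \log_4\left(\frac{3d - 1}{2}\right)$ for the rank inequality of $H_{d+2}'$. The reduction will combine face restriction of the Chv\'{a}tal--Gomory closure with monotonicity of the CG closure under edge removal.

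First, I would reduce to the case $n = d + 2$. Since $n - d \geq 2$, pick two unstretched vertices $u_1, u_2 \in [n] \setminus D(G)$ and set $H \coloneqq G[\set{u_1, u_2} \cup \bigcup_{i \in D(G)} \set{i_0, i_1, i_2}]$, an induced subgraph on $3d + 2$ vertices. Then $H \in \K_{d+2, d}$, $H$ inherits the no-same-type-wing-wing-edge condition of Prop~\ref{prop511}, and $\alpha(H) = d+1$ by Lemma~\ref{lem302}(iii). The rank inequality $\bar{e}^{\top}x \leq d+1$ for $G$ restricts on the face $\set{x_v = 0 : v \notin V(H)}$ to the rank inequality for $H$. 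The standard face-restriction inclusion $\CG^{\ell}(\FRAC(H)) \subseteq \CG^{\ell}(\FRAC(G)) \cap \set{x_v = 0 : v \notin V(H)}$, provable by induction on $\ell$ using that every valid inequality for $\FRAC(G)$ is also valid for $\FRAC(H)$ when restricted to $x_v = 0$ off $V(H)$, transfers any CG-rank lower bound for $H$ to $G$.

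Second, I would show that under an appropriate vertex bijection sending $\set{u_1, u_2} \to \set{1, 2}$, hubs to hubs, and wings to wings (with a consistent relabeling of $i_1 \leftrightarrow i_2$ per stretched vertex allowed), $H$ is a spanning subgraph of $H_{d+2}'$, i.e., $E(H) \subseteq E(H_{d+2}')$. The Prop~\ref{prop511} condition guarantees that every wing-to-wing edge of $H$ between distinct stretched vertices is cross-type, and all such cross-type edges are contained in $E(H_{d+2}')$. To align the wing-to-unstretched-vertex adjacencies with those of $H_{d+2}'$ (in which $i_1$ is adjacent to $2$ only and $i_2$ to $1$ only among $\set{1, 2}$), it suffices to choose $u_1, u_2$ such that for each $i \in D(G)$ exactly one of $u_1, u_2$ is adjacent to each wing of $i$, and then relabel the wings accordingly. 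Once $E(H) \subseteq E(H_{d+2}')$ holds, monotonicity gives $\FRAC(H) \supseteq \FRAC(H_{d+2}')$, hence $\CG^\ell(\FRAC(H)) \supseteq \CG^\ell(\FRAC(H_{d+2}'))$ for every $\ell$. Since $\alpha(H) = \alpha(H_{d+2}') = d+1$, the CG-rank of $\bar{e}^\top x \leq d+1$ for $H$ is at least that for $H_{d+2}'$, yielding the desired bound via Proposition~\ref{prop510}.

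The main obstacle will be ensuring that two unstretched vertices $u_1, u_2$ can always be selected so that each wing of $H$ has exactly one neighbor in $\set{u_1, u_2}$---equivalently, that neither $u_1$ nor $u_2$ is adjacent to both wings of any stretched vertex. In configurations where some stretched vertex has a wing adjacent to every unstretched vertex, this selection is infeasible, and one might need either to shrink the induced subgraph further (potentially working with a smaller value of $d$ at the cost of weakening the bound) or to adapt the direct fractional-point construction behind Proposition~\ref{prop510} to the general setting of $G$ using only the weaker hypothesis provided by the no-same-type-wing-edge condition.
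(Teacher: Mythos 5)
Your proposal follows essentially the same route as the paper's proof: restrict to two unstretched vertices so as to land in $\K_{d+2,d}$, argue that the resulting graph is a spanning subgraph of $H_{d+2}'$, and then combine monotonicity of the $\CG$ closure under edge removal with Proposition~\ref{prop510} (whose bound with $k=d+2$ is indeed $\log_4\bigl(\tfrac{3d-1}{2}\bigr)$). One point where you are actually more careful than the paper: for the reduction from $n\geq d+3$ to $n=d+2$, the paper appeals to ``$\FRAC(G')$ is a projection of $\FRAC(G)$,'' but the projection inclusion $\mathrm{proj}(\CG^{\ell}(\FRAC(G)))\subseteq \CG^{\ell}(\FRAC(G'))$ points the wrong way for transferring \emph{non}-validity; the face inclusion you use, $\CG^{\ell}(\FRAC(H))\subseteq \CG^{\ell}(\FRAC(G))\cap\set{x_v=0 : v\notin V(H)}$ (the easy direction of Chv\'atal's face lemma, iterated via monotonicity of $\CG$), is the argument that actually does the job.

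The obstacle you flag at the end is real, and you should know that the paper does not resolve it: the published proof simply asserts that the wing--wing hypothesis alone yields $E(G)\subseteq E(H_{d+2}')$ when $n=d+2$. Your concern is substantive because the stretching operation permits $A_1\cap A_2\neq\emptyset$, so a wing may be adjacent to \emph{both} unstretched vertices, and even when each wing sees exactly one of them, the wing labels forced by the cross-type edges $\set{i_1,j_2}$ need not be consistent with the orientation $i_1\sim 2$, $i_2\sim 1$ demanded by $H_{d+2}'$ (swapping the wings of a single stretched vertex to fix its unstretched adjacencies turns its cross-type wing--wing edges into same-type ones). For the intended application to $\A_{k,S}$ one has $n\geq d+3$ and may retain the unstretched vertices $1$ and $3$, for which everything lines up; but for $n=d+2$ there is no such freedom, and the hypothesis as stated does not exclude the bad configurations. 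So your write-up is incomplete exactly where the paper's proof is silent; closing the gap requires either strengthening the hypothesis to control wing-to-unstretched adjacencies, or, as you suggest, adapting the fractional-point construction behind Proposition~\ref{prop510} directly to $G$.
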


\begin{proof}
We first prove the claim for the case when $n = d+2$. Since $G$ does not contain the edges $\set{i_1, j_1}$ and $\set{i_2, j_2}$ for every distinct $i,j \in D(G)$, it follows that $E(G) \subseteq E(H_{d+2}')$. Hence, $\FRAC(G)$ is defined by a subset of the inequalities defining $\FRAC(H_{d+2}')$, and as a result $\CG^{\ell}(G)$ is defined by a subset of the inequalities defining $\CG^{\ell}(H_{d+2}')$ for every positive integer $\ell$. This implies that $\CG^{\ell}(H_{d+2}') \subseteq \CG^{\ell}(G)$ for every $\ell \in \mN$, and thus the $\CG$-rank bound of the inequality $\bar{e}^{\top}x \leq d+1$ for $H_{d+2}'$ in Proposition~\ref{prop510} applies for $G$.

Next, suppose $n \geq d+3$. Then, we can delete all but two unstretched vertices from $G$ to obtain a subgraph $G' \in \K_{d+2, d}$ where $E(G') \subseteq E(H_{d+2}')$. Now the argument from the preceding paragraph applies, and we obtain that the $\CG$-rank of the inequality $\sum_{i \in V(G')} x_i \leq d+1$ is at least that of $H_{d+2}'$. Furthermore, since $\FRAC(G')$ is a projection of $\FRAC(G)$, if $\sum_{i \in V(G')} x_i \leq d+1$ is not valid for $\CG^{\ell}(G')$ for a given $\ell \in \mN$, then $\sum_{i \in V(G)} x_i \leq d+1$ cannot be valid for $\CG^{\ell}(G)$. Hence, the $\CG$-rank of $\sum_{i \in V(G)} x_i \leq d+1$ for $\STAB(G)$ is at least that of $\sum_{i \in V(G')} x_i \leq d+1$ for $\STAB(G')$. Thus, our claim follows in this case as well.
\end{proof}

Proposition~\ref{prop511} shows that one can construct families of stretched cliques with arbitrarily high $\CG$-rank. One such family is the graphs $\A_{k,S}$ (for any choice of $S$). Therefore, we see that the stable set polytopes of these graphs are not only challenging instances for $\LS_+$, but are also computationally costly for $\CG$.

\section{Future research directions}\label{sec05}

We conclude the manuscript by mentioning a few natural questions raised by our work herein.

\begin{problem}
Obtain a combinatorial characterization of all $\ell$-minimal graphs.
\end{problem}

This problem is related to Conjecture 40 of \cite{LiptakT03}. The conjecture has two parts. The first part is the existence of $\ell$-minimal graphs for every positive integer $\ell$. The second part of the conjecture stated ``Moreover, the equality is attained by a subdivision of the clique $K_{\ell +2}$.'' (We used $\ell$ in place of $k$ used in \cite{LiptakT03}.) Escalante, Montelar and Nasini \cite{EscalanteMN06} proved that if the word ``subdivision'' is interpreted as ``only replacing edges with paths of length at least one'' that second part of the conjecture is true for $\ell =3$, but false for all $\ell \geq 4$. In the paper \cite{LiptakT03}, other, more general subdivision operations were discussed (including certain versions of \emph{stretching}). Note that here, we proved that with this interpretation of subdivision of a clique (which includes stretching), the second part of Conjecture 40 of \cite{LiptakT03} also holds.
 
Given a positive integer $\ell$, we showed (Corollary~\ref{cor501}) that $G \in \hat{\K}_{\ell+2, \ell-1}$ with $\omega(G) \leq 3$ is sufficient for a given graph $G$ to be $\ell$-minimal. However, as shown in Proposition~\ref{prop506} and suggested by the numerical evidence presented in Figure~\ref{figK52}, being in $\hat{\K}_{\ell+2, \ell-1}$ is not a necessary condition for $\ell$-minimal graphs. In fact, since the completion of this manuscript, subsequent computational work~\cite{AuT26} has produced many additional examples: in particular, it shows that there are at least 18 $3$-minimal graphs which do not belong to $\K_{5,2}$, as well as many more $4$-minimal graphs which do not belong to $\K_{6,3}$, including at least one which does not contain $K_6$ as a graph minor.

Thus, there are still much about $\ell$-minimal graphs that we have yet to understand. What are some other interesting properties of these graphs?  More ambitiously, can we obtain a combinatorial characterization of exactly when a given graph is $\ell$-minimal?

\begin{problem}
Let $\bar{n}_+^{\geq}(\ell)$ be the smallest possible number of vertices needed for a vertex-transitive graph $G$ to have $r_+(G) \geq \ell$. What is $\displaystyle\lim_{\ell \to \infty} \frac{\bar{n}_+^{\geq}(\ell)}{\ell}$?
\end{problem}

It follows immediately from Theorem~\ref{thm509} that $\bar{n}_+^{\geq}(\ell) \leq 4\ell+12$, and so $\displaystyle\lim_{\ell \to \infty} \frac{\bar{n}_+^{\geq}(\ell)}{\ell} \leq 4$. On the other hand, it is obvious that $\bar{n}_+^{\geq}(\ell) \geq n_+(\ell) = 3\ell$ for all $\ell \geq 1$, and so $\displaystyle\lim_{\ell \to \infty} \frac{\bar{n}_+^{\geq}(\ell)}{\ell} \geq 3$. Can we find out what the true value of the limit (or even a closed-form formula for $\bar{n}_+^{\geq}(\ell)$), or at least prove tighter bounds?

(As an aside, we remark that the problem could be rather different if we instead consider $\bar{n}^{=}_+(\ell)$, which is defined to be the smallest possible number of vertices needed for a vertex-transitive graph $G$ to have $r_+(G)$ \emph{equal to} $\ell$. In this case, since the line graph of every odd clique is vertex-transitive, we know that $\bar{n}^{=}_+(\ell) \leq 2\ell^2+\ell$~\cite{StephenT99}. Moreover, subsequent computational work~\cite{AuT26} determined the smallest vertex-transitive graphs with $\LS_+$-rank $\ell$ for every $\ell \leq 4$; in particular,
\[
\bar{n}^{=}_+(1)=3,\qquad
\bar{n}^{=}_+(2)=8,\qquad
\bar{n}^{=}_+(3)=13,\qquad
\bar{n}^{=}_+(4)=16.
\]
Despite the pattern of the initial values, it is not immediately clear to us that $\bar{n}^{=}_+(\ell)$ must be an increasing function of $\ell$ in general. Thus, there is a chance that the limit $\displaystyle\lim_{\ell \to \infty} \frac{\bar{n}^{=}_+(\ell)}{\ell}$ may not exist.)

\begin{problem}
For each pair of positive integers $(n, \ell)$ with $n \geq \ell$, characterize the family of graphs $G$ on $n$ vertices which maximize the integrality ratio: 
\[
\frac{\alpha_{\LS_+^{\ell}}(G)}{\alpha(G)},
\]
where $\alpha_{\LS_+^{\ell}}(G) \ce \max\left\{\bar{e}^{\top}x \, : \, x \in \LS_+^{\ell}(G)\right\}.$  
\end{problem}

In this manuscript, we showed that $\ell$-minimal graphs exist for every $\ell \geq 1$, establishing graphs which are worst-case scenarios for $\LS_+$ in the sense of needing the maximum possible number of iterations of $\LS_+$ to ``compute'' the stable set polytope. In addition to the $\LS_+$-rank, another measure of the hardness of a graph is the \emph{integrality gap} for the relaxation $\LS_+(G)$. Progress in this direction would provide new understanding about the $\LS_+$-relaxations of the stable set polytope of graphs from a different angle. Note that for random graphs $G_{n,1/2}$ we understand such integrality ratios to some extent. $\alpha(G_{n,1/2})$ is almost surely around $2\log_2(n)$. Feige and Krauthgamer~\cite{FeigeK03} showed (in providing an answer to the other question of Knuth about $\LS_+$ in \cite{Knuth1994}) that $\alpha_{\LS_+^{\ell}}(G)$ is almost surely around $\sqrt{n/2^{\ell}}$ for $\ell = o(\log(n))$.

\begin{problem}
Are there other applications for Lemma~\ref{lem402} and the ideas used in its proof?
\end{problem}

The definition of $\LS_+^k$ naturally lends itself to inductive arguments when it comes to establishing rank lower bounds for a family of instances. Previous examples of this type of argument include the aforementioned result by Stephen and the second author on the line graphs of odd cliques~\cite{StephenT99}, as well as for the family of graphs $H_k$ in~\cite{AuT24}. For our main result in this manuscript, a key insight was to build our proof around certifying the membership of the vector $v(G,\epsilon)$, which behaves well under deletion and destruction of vertices, even when the underlying graphs in $\K_{n,d}$ do not exhibit nearly as much symmetry as the two previous families of examples.

In particular, the foundation of our argument is Lemma~\ref{lem402}, a noteworthy feature of which is that it allows us to establish $\LS_+$-rank lower bounds without having to construct and verify specific numerical certificates. We  intentionally stated this lemma as a result for $\LS_+$-relaxations in general, and it would be interesting to see if this result and its insights can lead to breakthroughs in the analysis of other convex relaxations.

\bibliographystyle{alpha}
\bibliography{ref} 

\end{document}